\documentclass
[11pt,a4paper,oneside,english,english]
{article}
\usepackage[latin1]{inputenc}
\usepackage{indentfirst}

\usepackage{amsmath,amsfonts,amssymb,amsthm,amscd}
\RequirePackage{ifthen}
\RequirePackage{hyperref}
\usepackage{latexsym}
\usepackage{mathrsfs}
\usepackage{algorithm}
\usepackage[noend]{algorithmic}
\usepackage{paralist}
\usepackage{graphicx}
\usepackage{subfig}
\usepackage{booktabs}
\input xy  \xyoption{all}

\usepackage[a4paper,top=2.0cm,bottom=2.0cm,left=1.9cm,right=1.9cm]{geometry}

\parskip=2pt

\theoremstyle{plain}

\theoremstyle
{plain}
\newtheorem{teorema}{Theorem}[section]
\newtheorem{theorem}[teorema]{Theorem}
\newtheorem{proposizione}[teorema]{Proposition}
\newtheorem{proposition}[teorema]{Proposition}
\newtheorem{lemma}[teorema]{Lemma}
\newtheorem{corollario}[teorema]{Corollary}
\newtheorem{fact}[teorema]{Fact}
\newtheorem{domanda}[teorema]{Question}
\newtheorem{claim}[teorema]{Claim}
\newtheorem{problem}[teorema]{Problem}
\theoremstyle{definition}
\newtheorem{definizione}[teorema]{Definition}
\newtheorem{esempio}[teorema]{Example}
\newtheorem{example}[teorema]{Example}
\newtheorem{osservazione}[teorema]{Remark}
\newtheorem{remark}[teorema]{Remark}

\makeindex

\newcommand{\N}{\mathbb{N}}
\newcommand{\Z}{\mathbb{Z}}
\newcommand{\Q}{\mathbb{Q}}
\newcommand{\R}{\mathbb{R}}

\newcommand{\T}{\mathbb{T}}

\newcommand{\B}{\mathscr B}
\newcommand{\D}{\mathcal D}
\newcommand{\Sm}{\mathcal S}
\newcommand{\BB}{\mathcal B}

\newcommand{\E}{\mathcal E}

\newcommand{\II}{\mathcal I}

\newcommand{\NB}{$\clubsuit\ $}

\newcommand{\XX}{\mathcal X}
\newcommand{\YY}{\mathcal Y}
\newcommand{\cl}{\mathfrak{cl}}
\newcommand{\rC}{\mathfrak{r}\mathcal{C} }

\def\CSsim{\mathbf{Coarse/\!_\sim}}
\def\CS{\mathbf{Coarse}}
\def\CGQsim{\mathbf{CGrpQ/\!_\sim}}
\def\CGsim{\mathbf{CGrp/\!_\sim}}
\def\lCG{\mathbf{l\mbox{-}CGrp}}
\def\rCG{\mathbf{r\mbox{-}CGrp}}
\def\CGQ{\mathbf{CGrpQ}}
\def\CG{\mathbf{CGrp}}

\def\kappaCGQ{\mathbf{\kappa\mbox{-}CGrpQ}}
\def\kappaCG{\mathbf{\kappa\mbox{-}CGrp}}
\def\lCGQ{\mathbf{l\mbox{-}CGrpQ}}
\def\rCGQ{\mathbf{r\mbox{-}CGrpQ}}
\def\lkappaCG{\mathbf{l\mbox{-}\kappa\mbox{-}CGrp}}
\def\rkappaCG{\mathbf{r\mbox{-}\kappa\mbox{-}CGrp}}

\def\kappaCGQsim{\mathbf{\kappa\mbox{-}CGrpQ/\!_\sim}}
\def\kappaCGsim{\mathbf{\kappa\mbox{-}CGrp/\!_\sim}}

\def\perf{perfect}

\DeclareMathOperator{\asdim}{asdim}

\DeclareMathOperator{\Tor}{Tor}

\DeclareMathOperator{\n}{n}
\DeclareMathOperator{\Mor}{Mor}
\DeclareMathOperator{\Ufun}{U}

\DeclareMathOperator{\Ffun}{F}
\DeclareMathOperator{\Gfun}{G}

\DeclareMathOperator{\Qfun}{Q}
\DeclareMathOperator{\Funct}{Funct}

\author{Dikran Dikranjan\footnote{This author was partially supported by grant PSD-2015-2017-DIMA - progetto PRID TokaDyMA
 of Udine University.}
\  and \  Nicol\`o Zava\footnote{The second author was partially supported by the Project SIR2014 - GADYGR.}
\\  \\ {\footnotesize Department of Mathematics, Computer Science and Physics, University of Udine}
\\ {\footnotesize Via Delle Scienze 206, 33100 Udine, Italy}\\{\footnotesize \tt dikran.dikranjan@uniud.it}, \  {\footnotesize \tt nicolo.zava@gmail.com}
}

\title{Categories of coarse groups: quasi-homomorphisms and functorial coarse structures 
\thanks{MSC: 18B30, 18B99, 18E35, 20K45, 54E99, 54H11.
\endgraf{Keywords}: quasi-homomorphism, functorial coarse structure, coarse group, group ideal, localization of a category, coarse space, coarse equivalence, cardinal invariants.}
}
\date{}
\begin{document}
\maketitle
\begin{abstract}
Coarse geometry is the study of large-scale properties of spaces. In this paper we study group coarse structures (i.e., coarse structures on groups that agree with the algebraic structures), by using group ideals. We introduce a large class of examples of group coarse structures induced by cardinal invariants. In order to enhance the categorical treatment of the subject, we use quasi-homomorphisms, as a large-scale counterpart of homomorphisms. In particular, the localisation of a category plays a fundamental role. We then define the notion of functorial coarse structures and we give various examples of those structures. 
\end{abstract}

%


\section*{Introduction}



Coarse geometry, also known as large-scale geometry is the study of large-scale properties of spaces, ignoring their local, small-scale ones. This theory found applications in several branches of mathematics, for example in geometric group theory (following the work of Gromov on finitely generated groups endowed with their word metrics), in Novikov conjecture, and in coarse Baum-Connes conjecture. We refer to \cite{NowYu} for a comprehensive introduction to large-scale geometry of metric spaces, and to \cite{Har} for applications to geometric group theory.

Large-scale geometry was initially developed for metric spaces, but then several equivalent structures that capture the large-scale properties of spaces appeared, inspired by the theory of uniform spaces (\cite{Isb}). Roe introduced coarse spaces (\cite{roe}), as a counterpart of Weil's definition of uniform spaces via entourages, Protasov and Banakh (\cite{ProBa}) defined balleans, generalising the ball structure of metric spaces, Dydak and Hoffland with large-scale structures (\cite{DydHof}) and Protasov with asymptotic proximities (\cite{Proprox}) independently developed the approach via coverings, as Tukey did for uniform spaces. 
 As for the definition of coarse structures and coarse spaces, we refer to Definition \ref{Def:Roe}. In \cite{DikZa_cat} the equivalence between those structures is deeply studied and a categorical treatment of the subject is provided. In particular, the category $\CS$, of coarse spaces and bornologous maps (Definition \ref{def:Mor}) between them is considered, as well as its quotient category $\CSsim$, where $\sim$ is the closeness relation between morphisms. Some properties of both categories are shown, for example, $\CS$ is a topological category. 
Moreover, we characterised in \cite{DikZa_cat} both the monomorphisms and the epimorphisms of $\CSsim$, showing that it is a balanced category. In \cite{Zacwp} this study is pushed further, establishing, among others, cowellpoweredness of $\CSsim$.

In this paper we are interested in coarse structures on groups, aiming for a categorical treatment of the subject. We require that those coarse structures agree with the algebraic structures of the supporting group and this idea leads to the definition (Definition \ref{def:ideal}) of left (right) group coarse structures (and thus to left and right coarse groups). If a coarse structure on a group is both a left and 
right group coarse structure, we say that it is a uniformly invariant group coarse structure and the coarse group is called a bilateral coarse group. If there is no risk of ambiguity, for the sake of simplicity, we will refer to left group coarse structures as group coarse structures. The study of coarse groups was started by Protasov in \cite{Sketchgroup}, where he introduced this notion by using balleans. In the same paper he highlighted the fact that coarse groups are uniquely determined by a particular ideal of subsets of the group, called group ideal (Definition \ref{def:group_ideal}). The idea is similar to the fact that every group topology is uniquely determined by the filter of neighbourhoods of the identity. More recently, Nicas and Rosenthal (\cite{nicasros2}) developed the same approach via entourages.

We aim to define categories of coarse groups. The first choice is $\lCG$, whose objects are (left) coarse groups and whose morphisms are bornologous homomorphisms. Taking the quotient category $\lCG/_\sim$ of $\lCG$ under the closeness relation would be the next step. However we face some undesired consequences even dealing with basic examples. In fact, the inclusion homomorphism $i\colon \Z\to\R$, where both groups are endowed with the usual euclidean metric, is one of the first examples of coarse equivalences (see Definition \ref{def:Mor}). However, there is no coarse inverse of $i$ which is a homomorphism. Hence $[i]_\sim$ is not an isomorphism of $\lCG/_\sim$. In order to overcome this problem we need the notion of quasi-homomorphism.

A {\em quasi-homomorphism} (also called {\em quasi-morphism}) is a map $f\colon G\to\R$ from a group into the real line which is somehow ``close'' to be a homomorphism, i.e. there exists a constant $K>0$ such that $\lvert f(x)+f(y)-f(xy)\rvert<K$, for every $x,y\in G$. The notion of quasi-homomorphism dates back to some questions posed by Ulam (\cite{Ul}) in the realm of linear functional equations. We refer to \cite{Ul}, \cite{kot} and \cite{kot1} for an introduction to this classical subject 

Rosendal \cite{ros} noticed that the classical notion of quasi-homomorphism can be described and extended to other settings using the large-scale notion of closeness (see Definition \ref{def:q_hom} for a rigorous definition). Also in Fujiwara and Kapovich \cite{FujKap}, who followed some older sources, there is a generalisation of the classical notion of quasi-homomorphism. 

In this paper we study quasi-homomorphisms, in Rosendal's definition, in order to refute Kotschick's point of view: ``the notion of a quasi-morphism does not have much to do with category theory'' (\cite{kot}). We prove that, in the class of bilateral coarse groups (that properly contains all abelian coarse groups), maps close to quasi-homomorphisms are quasi-homomorphisms (Proposition \ref{prop:close_quasi_homo}) and composites of bornologous quasi-homomorphisms are bornologous quasi-homomorphisms (Proposition \ref{prop:comp_quasi-homo}). Finally, in the same class of coarse groups, we show that coarse inverses of quasi-homomorphisms that are coarse equivalences are quasi-homomorphisms. In particular, every coarse inverse of the inclusion map $i\colon\Z\to\R$ is a quasi-homomorphism (for example, the floor map $\lfloor\cdot\rfloor$).

We then define the quotient category $\CGQsim$ of bilateral coarse groups and equivalence classes of bornologous quasi-homomorphisms between them. In this category, every equivalence class of a homomorphism which is a coarse equivalence is an isomorphism. 

In \S\ref{sub:loc} we study the localisation $\CGsim[\mathcal W^{-1}]$ of the quotient category $\CGsim$, of bilateral coarse groups and equivalence classes of bornologous homomorphisms, by the family $\mathcal W$ of equivalence classes of homomorphisms which are coarse equivalences. The category $\CGsim[\mathcal W^{-1}]$, provided that it exists, is the ``smallest'' category containing $\CGsim$ for which all morphisms of $\mathcal W$ are isomorphisms. We then ask whether it exists and if it coincides with $\CGQsim$. As for the existence, we provide in Corollary \ref{coro:fin_loc} a positive answer in the case of $\kappa$-group coarse structures (in particular for the finitary one), for which a nice characterisation of morphisms is provided.

The group coarse structures used in \cite{nicasros2} and \cite{ros} are examples of {\em functorial coarse structures} (see Example \ref{ex:group_c_s} for detail). This should be compared with the notion of functoriality, appearing in the category of topological groups as follows. 
A {\em functorial topology} is a functor $\Ffun\colon {\bf Grp} \to {\bf TopGrp}$ that assigns to 
every {\em abstract} group $G$ a group topology $T_G$ so that $F\colon G \mapsto (G,T_G)$ is a 
functor $F\colon {\bf Grp} \to {\bf TopGrp}$, i.e., every group homomorphism $f\colon G \to H$ in {\bf Grp} gives rise to a continuous group homomorphism $f\colon (G,T_G) \to( H, T_H)$ in {\bf TopGrp} (\cite{fu,ma}).
%
Inspired by the existing examples of coarse structures on (topological) groups given in \cite{nicasros2}, we define a functorial coarse structure of groups as a functor $\Ffun\colon {\bf Grp} \to\XX$, where $\XX$ is $\lCG$ or $\rCG$, such that 
$F(G)=(G,\E)$,  i.e., $\Ufun\circ\Ffun$ is the identity functor,  
where $\Ufun$ is the forgetful functor $\Ufun\colon{\CG}\to {\bf Grp}$, defined by $\Ufun(G,\E)=G$ and similarly on morphisms. 

In \S \ref{section_linear_coarse} we introduce coarse structures induced by cardinal invariants
using ideals generated by subgroups (linear coarse structures). They are defined in \S \ref{LinStr}. 

In \S \ref{Sec:last} we scrutinise abelian groups under the looking glass of the functorial coarse structure induced
by the free-rank, establishing a kind of ``rigidity" of the class of divisible groups with respect to homomorphisms that are 
 coarse equivalences. 
For example, in Theorem \ref{ex:example} we prove that 
if a  fully decomposable torsion-free abelian group $G$ is coarsely equivalent (i.e., ``as close as possible'' from the large-scale point of view) to a divisible group, then $G$ is also ``as close as possible'' to a divisible group from algebraic point of view (i.e., $r_0(G/d(G)) < \omega$), in case $G$ is either  uncountable  or homogeneous.  
 These results go  close, more or less, to the spirit of the nice results obtained by Banakh, Higes and Zarichnyi \cite{banhzar} 
 where the asymptotic dimension was used to this end.

The paper is organised as follows. In Section \ref{sec:cs_cg} we introduce the objects we focus on, in particular, in \S\ref{sub:cs} we recall the definitions of coarse spaces and morphisms, while in \S\ref{sub:cg} we pass to group coarse structures and coarse groups, giving also the characterisation using group ideals, and we provide the important characterisation of bilateral coarse groups. In the same subsection, we give many examples of group ideals defined both on groups and on topological groups. Then Section \ref{sec:group_ideals} is devoted to show how coarse notions can be rewritten in terms of group ideals for coarse groups. In Section \ref{section_linear_coarse} we introduce and study group ideals (equivalently, group coarse structures) defined by cardinal and numerical invariants. In particular, these structures are introduced in \S\ref{LinStr}, while in \S\ref{Sec:last} we focus on those induced by the free-rank, and in \S\ref{sub:Ban_Che_Lya} we apply those results discussing a problem posed by Banakh, Chervak and Lyaskovska. The notion of quasi-homomorphism is the focus of Section \ref{section:quasi_hom}. Finally, Section \ref{sec:cat_coarse_grps} is dedicated to developing a categorical approach to the theory of coarse groups. We define the categories of coarse groups and bornologous (quasi-)homomorphisms   between them, we introduce functorial coarse structures (\S\ref{sub:funct}) and we prove some technical results concerning pullbacks (\S\ref{sub:pull}) in order to discuss the localisation of the category $\CGsim[\mathcal W^{-1}]$ (\S\ref{sub:loc}).

\subsection*{Notations and terminology}

In the sequel, we adopt the standard notation in group theory following \cite{fu,DPS,rob}.
In particular, we denote by $0$ the identity of an abelian group $G$ and by $\Tor(G)$ its torsion subgroup. Furthermore,  
$\N$, $\Z$, $\Q$ and $\R$ denote the sets of positive integers, of integers, of rational numbers and of real numbers, respectively. If $G$ is an abelian group, and $m\in\N$, $G[m]:=\{x\in G\mid mx=0\}$. If $G$ is an abelian group, $r_0(G)$ denotes the free rank of $G$, i.e., the cardinality of the maximal independent subset of $G$, while, for every prime $p$, $r_p(G)$ denotes the $p$-rank of $G$, i.e., the value $r_p(G):=\dim_{\Z/p\Z}G[p]$, the dimension of $G[p]$ as a linear space over the finite field $\Z/p\Z$.

\section{Coarse spaces and coarse groups}\label{sec:cs_cg}

\subsection{Coarse spaces}\label{sub:cs}

\begin{definizione}\label{Def:Roe} According to Roe (\cite{roe}), a \emph{coarse space} is a pair $(X,\E)$, where $X$ is a set and $\E\subseteq\mathcal P(X\times X)$ a \emph{coarse structure} on it, which means that
\begin{compactenum}[(i)]
  \item $\Delta_X:=\{(x,x)\mid x\in X\}\in\E$;
  \item $\E$ is an {\em ideal}, i.e., it is closed under taking finite unions and subsets;
  \item if $E\in\E$, then $E^{-1}:=\{(y,x)\in X\times X\mid(x,y)\in E\}\in\E$;
  \item if $E,F\in\E$, then $E\circ F:=\{(x,y)\in X\times X\mid\exists z\in X:\,(x,z)\in E,(z,y)\in F\}\in\E$. 
\end{compactenum}
 An element $E$ of $\E$ is called {\em entourage}. We say that an entourage $E$ is {\em symmetric} if $E^{-1}=E$.
\end{definizione}

If $X$ is a set, a {\em base of a coarse structure} is a family $\BB$ of entourages such that its {\em completion} $\cl(\BB):=\{F\subseteq B\mid B\in\BB\}$ is a coarse structure. Note that $\cl(\BB)$ is the closure of $\BB$ under taking subsets. For example, for every coarse structure $\E$, the family of all symmetric entourages forms a base of $\E$.

 If $(X,\E)$ is a coarse space and $Y$ is a subset of $X$, then $Y$ can be endowed with the {\em subspace coarse structure} $\E|_Y:=\{E\cap(Y\times Y)\mid E\in\E\}$.

If $(X,\E)$ is a coarse space and $x\in X$, a subset $B$ of $X$ is {\em bounded from $x$} if there exists an entourage $E\in\E$ such that $B\subseteq E[x]:=\{y\in X\mid (x,y)\in E\}$. A subset is {\em bounded} if it is bounded from a point. A subset $L\subseteq X$ is {\em large in $X$} if there exists $E\in\E$ such that $E[L]:=\bigcup_{x\in L}E[x]=X$.

 Let $(X,\E)$ be a coarse space and $x\in X$ be a point. The {\em connected component of $x$} is the subset $\mathcal Q_X(x):=\bigcup_{E\in\E}E[x]$. A coarse space $(X,\E)$ is {\em connected} if, for every $x\in X$, $X=\mathcal Q_X(x)$ or, equivalently, if $\bigcup\E=X\times X$.

We say that two maps $f,g\colon S\to(X,\E)$ from a set to a coarse space are \emph{close}, and we write $f\sim g$, if $\{(f(x),g(x))\mid x\in S\}\in\E$.
\begin{definizione}\label{def:Mor}
Let $(X,\E_X)$ and $(Y,\E_Y)$ be two coarse spaces. A map $f\colon X\to Y$ is
\begin{compactenum}[(i)]
  \item \emph{bornologous} if $(f\times f)(E)\in\E_Y$ for all $E\in\E_X$;
  \item {\em large-scale injective} if $(f\times f)^{-1}(\Delta_Y)\in\E_X$;
  \item {\em weakly uniformly bounded copreserving} (\cite{Za}) if, for every $E\in\E_Y$, there exists $F\in\E_X$ such that $E\cap(f(X)\times f(X))\subseteq(f\times f)(F)$;
  \item {\em uniformly bounded copreserving} (\cite{Za}) if, for every $E\in\E_Y$, there exists $F\in\E_X$ such that $F[f(x)]\cap f(X)\subseteq f(E[x])$, for every $x\in X$;
  \item {\em proper} if $f^{-1}(B)$ is bounded for every bounded set $B$ of $Y$;
  \item {\em effectively proper} if  $(f\times f)^{-1}(E)\in\E_X$ for all $E\in\E_Y$;
  \item a {\em coarse embedding} if it is both bornologous and effectively proper;
  \item an {\em asymorphism} if $f$ is bijective and both $f$ and $f^{-1}$ are bornologous;
  \item a \emph{coarse equivalence} if $f$ is bornologous and one of the following equivalent conditions holds:
\begin{compactenum}
  \item[(ix,a)] there exists a bornologous map $g\colon Y\to X$, called {\em coarse inverse of $f$} such that $g\circ f\sim id_X$ and $f\circ g\sim id_Y$;
  \item[(ix,b)] $f$ is effectively proper and $f(X)$ is large in $Y$ (i.e., $f$ is {\em large-scale surjective}).
\end{compactenum}
\end{compactenum}
A family of maps $\{f_i\colon X\to Y\}_{i\in I}$ is {\em uniformly bornologous} if, for every $E\in\E_X$, there exists $F\in\E_Y$ such that $(f_i\times f_i)(E)\subseteq F$, for every $i\in I$. 
\end{definizione}

A family $\mathcal U$ of subsets of a coarse space $(X,\E)$ is {\em uniformly bounded} if there exists $E\in\E$ such that, for every $U\in\mathcal U$ and $x\in U$, $U\subseteq E[x]$. With this notion, we can immediately give a different characterisation of large-scale injectivity: a map between coarse spaces is large-scale injective if and only if it has uniformly bounded fibers.

\begin{remark} Let $f\colon X\to Y$ be a map between coarse spaces. Then it canonically factorises as
$$
\xymatrix{
X\ar^f[rr]\ar_{e_f}[dr]&& Y,\\
& f(X)\ar_{m_f}[ur] &
}
$$
where $f(X)$ is endowed with the subspace coarse structure inherited by $Y$, $e_f$ is a surjective bornologous map, and $m_f$ is an asymorphic embedding. Then $f$ is effectively proper (uniformly bounded copreserving, or weakly uniformly bounded copreserving) if and only if $e_f$ is effectively proper (uniformly bounded copreserving, or weakly uniformly bounded copreserving, respectively).
\end{remark}

The following implications between some the previous concepts were pointed out in \cite{Za}: 

\begin{proposition}\label{prop:eff_cop_wcop}
Let $f\colon(X,\E_X)\to(Y,\E_Y)$ be a map between coarse spaces. Then:
\begin{compactenum}[(i)]
	\item if $f$ is effectively proper, then $f$ is uniformly bounded copreserving;
	\item if $f$ is uniformly bounded copreserving, then $f$ is weakly uniformly bounded copreserving.
\end{compactenum}
\end{proposition}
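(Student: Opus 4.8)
The plan is to prove both implications by direct element chases straight from Definition~\ref{def:Mor}; no ingredient beyond the definitions is needed. To keep the bookkeeping light I would first invoke the factorisation $f=m_f\circ e_f$ recalled just above: since $e_f\colon X\to f(X)$ is surjective and bornologous, and since each of the three properties in question passes between $f$ and $e_f$, one may assume $f$ is surjective, so that the clauses ``$\cap\, f(X)$'' and ``$\cap\,(f(X)\times f(X))$'' disappear. One can equally well skip this reduction and simply carry those intersections through the computations; I will write the arguments in that slightly more general form. The only point demanding any care is precisely this restriction to the image; beyond that the argument is immediate, and I would also note in passing that --- in contrast with several neighbouring statements --- neither implication requires the entourages to be symmetric.

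For (i), given $E\in\E_Y$ I would set $F:=(f\times f)^{-1}(E)$, which lies in $\E_X$ because $f$ is effectively proper, and check that this $F$ witnesses uniform bounded copreservation for $E$. Indeed, fix $x\in X$ and take $y\in E[f(x)]\cap f(X)$, say $y=f(x')$ with $x'\in X$; then $(f(x),f(x'))\in E$, hence $(x,x')\in(f\times f)^{-1}(E)=F$, i.e. $x'\in F[x]$, whence $y=f(x')\in f(F[x])$. This gives $E[f(x)]\cap f(X)\subseteq f(F[x])$ for every $x\in X$, as required.

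For (ii), given $E\in\E_Y$ I would take $F\in\E_X$ supplied by uniform bounded copreservation, so that $E[f(x)]\cap f(X)\subseteq f(F[x])$ for all $x\in X$, and show that the same $F$ works for weak uniform bounded copreservation. Take $(u,v)\in E\cap(f(X)\times f(X))$, say $u=f(x)$ and $v=f(x')$ with $x,x'\in X$; then $v=f(x')\in E[f(x)]\cap f(X)\subseteq f(F[x])$, so $v=f(x'')$ for some $x''\in F[x]$, i.e. $(x,x'')\in F$, and therefore $(u,v)=(f(x),f(x''))=(f\times f)(x,x'')\in(f\times f)(F)$. Hence $E\cap(f(X)\times f(X))\subseteq(f\times f)(F)$, which completes the sketch. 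The main ``obstacle'' is thus purely notational: being disciplined about the passage through the image $f(X)$, which the reduction to surjective maps removes entirely.
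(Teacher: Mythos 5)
Your argument is correct: both implications follow exactly as you write them by unwinding the definitions (the paper itself gives no proof, citing [Za], and the direct element chase you perform is clearly the intended one; the preliminary reduction to surjective $f$ is harmless but, as you note, dispensable). One small point worth flagging is that Definition~\ref{def:Mor}(iv) as printed has the letters $E$ and $F$ transposed; you have silently used the only reading that typechecks, namely $E[f(x)]\cap f(X)\subseteq f(F[x])$ with $E\in\E_Y$ and $F\in\E_X$, which is also the form used later in Proposition~\ref{prop:homo_unif_bounded_copres}.
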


In \cite{Za} it is proved that the previous implications cannot be reverted in general. However, if the map is large-scale injective, then those concepts are equivalent.

\begin{proposition}[\cite{Za}]\label{prop:lsi_eff_pr_ubc_wubc}
Let $f\colon(X,\E_X)\to(Y,\E_Y)$ be a large-scale injective map between coarse spaces. Then the following properties are equivalent:
\begin{compactenum}[(i)]
\item $f$ is effectively proper;
\item $f$ is uniformly bounded copreserving;
\item $f$ is weakly uniformly bounded copreserving.
\end{compactenum}
\end{proposition}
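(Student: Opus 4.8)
The plan is to observe that Proposition~\ref{prop:eff_cop_wcop} already yields the implications (i)$\Rightarrow$(ii)$\Rightarrow$(iii) \emph{without} any injectivity assumption, so the only thing left to prove is (iii)$\Rightarrow$(i), and this is exactly where large-scale injectivity enters. The strategy is to use the fibre-boundedness reformulation of large-scale injectivity to "repair" the gap between the genuine preimage $(f\times f)^{-1}(E)$ of an entourage and the image-side approximation provided by weak uniform bounded copreservation.

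Concretely, I would set $D:=(f\times f)^{-1}(\Delta_Y)=\{(x,x')\in X\times X\mid f(x)=f(x')\}$. By large-scale injectivity $D\in\E_X$ (this is precisely the "uniformly bounded fibres" statement recalled right before the proposition), and moreover $\Delta_X\subseteq D$ and $D$ is symmetric. Now fix $E\in\E_Y$; we must show $(f\times f)^{-1}(E)\in\E_X$. Applying (iii) to $E$ gives some $F\in\E_X$ with $E\cap(f(X)\times f(X))\subseteq(f\times f)(F)$. Take any pair $(x,x')\in(f\times f)^{-1}(E)$. Then $(f(x),f(x'))\in E$, and this pair automatically lies in $f(X)\times f(X)$, hence in $(f\times f)(F)$; so there exist $a,b\in X$ with $(a,b)\in F$, $f(a)=f(x)$ and $f(b)=f(x')$. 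Thus $(x,a)\in D$ and $(b,x')\in D$, whence $(x,x')\in D\circ F\circ D$.

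This shows $(f\times f)^{-1}(E)\subseteq D\circ F\circ D$, and since $D\circ F\circ D\in\E_X$ and $\E_X$ is an ideal (closed under subsets), we conclude $(f\times f)^{-1}(E)\in\E_X$. As $E\in\E_Y$ was arbitrary, $f$ is effectively proper, completing the cycle of implications. The argument is essentially a diagram chase through the fibres of $f$; there is no real obstacle, and the only point requiring a moment of care is noticing that the membership $(f(x),f(x'))\in E$ places the pair inside $f(X)\times f(X)$ for free, so that the weak copreservation hypothesis is directly applicable.
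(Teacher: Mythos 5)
Your proof is correct. Note that the paper itself states this proposition as a citation from \cite{Za} and gives no proof, so there is nothing to compare against; your argument --- reducing to (iii)$\Rightarrow$(i) via Proposition \ref{prop:eff_cop_wcop} and then sandwiching $(f\times f)^{-1}(E)\subseteq D\circ F\circ D$ with $D=(f\times f)^{-1}(\Delta_Y)\in\E_X$ --- is the natural one, and every step (membership in $f(X)\times f(X)$, closure of $\E_X$ under composition and subsets) checks out.
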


As a consequence, we have another characterisation of coarse equivalences.

\begin{corollario}[\cite{Za}]\label{coro:ls_bijective_ce}
Let $f\colon(X,\E_X)\to(Y,\E_Y)$ be a map between coarse spaces. Then $f$ is a coarse equivalence if and only if the following conditions hold:
\begin{compactenum}[(i)]
\item $f$ is both large-scale injective and large-scale surjective;
\item $f$ is bornologous;
\item $f$ is uniformly bounded copreserving.
\end{compactenum}
\end{corollario}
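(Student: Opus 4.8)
The plan is to reduce everything to the characterisation of coarse equivalences recorded in Definition \ref{def:Mor}(ix,b), namely that $f$ is a coarse equivalence if and only if $f$ is bornologous, effectively proper, and large-scale surjective (i.e.\ $f(X)$ is large in $Y$). Since (ix,a) and (ix,b) are already declared equivalent, we may freely use (ix,b) as the working definition. Once this is done, the corollary becomes a direct bookkeeping exercise on top of Propositions \ref{prop:eff_cop_wcop} and \ref{prop:lsi_eff_pr_ubc_wubc}.

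For the forward implication I would assume $f$ is a coarse equivalence and read off the conclusions from (ix,b): $f$ is bornologous, which is (ii); $f$ is large-scale surjective, which is half of (i); and $f$ is effectively proper. From effective properness, $(f\times f)^{-1}(\Delta_Y)\in\E_X$ by Definition \ref{def:Mor}(vi), that is, $f$ is large-scale injective, giving the remaining half of (i). Finally, Proposition \ref{prop:eff_cop_wcop}(i) turns effective properness into the uniformly bounded copreserving property, which is (iii).

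For the converse I would assume (i), (ii), (iii) and aim to verify the three clauses of (ix,b). Bornologousness is exactly (ii), and large-scale surjectivity is contained in (i), so it only remains to show that $f$ is effectively proper. But by (i) the map $f$ is large-scale injective, and by (iii) it is uniformly bounded copreserving, so Proposition \ref{prop:lsi_eff_pr_ubc_wubc} (the implication from uniformly bounded copreserving to effectively proper, valid under large-scale injectivity) supplies effective properness. Hence $f$ satisfies (ix,b) and is a coarse equivalence.

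I do not expect any genuine obstacle: the mathematical substance is entirely absorbed into the two cited propositions, and the only point requiring care is matching each piece of the (ix,b) description of a coarse equivalence to the corresponding item among (i)--(iii), together with the observation that large-scale injectivity is precisely the condition $(f\times f)^{-1}(\Delta_Y)\in\E_X$ appearing in effective properness.
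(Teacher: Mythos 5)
Your proposal is correct and follows exactly the route the paper intends: the corollary is stated as an immediate consequence of Propositions \ref{prop:eff_cop_wcop} and \ref{prop:lsi_eff_pr_ubc_wubc} together with characterisation (ix,b) of coarse equivalences, and your bookkeeping (effective properness gives large-scale injectivity since $\Delta_Y\in\E_Y$, and conversely large-scale injectivity plus uniformly bounded copreserving upgrades to effective properness) is precisely the argument the paper leaves implicit by citing \cite{Za}.
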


By virtue of Proposition \ref{prop:lsi_eff_pr_ubc_wubc}, the current condition on $f$ in item (iii) of Corollary \ref{coro:ls_bijective_ce} can be replaced 
by the weaker condition ``weakly uniformly bounded copreserving".

Let $\{(X_i,\E_i)\}_{i\in I}$ be a family of coarse spaces. Let $X=\Pi_iX_i$ and $p_i\colon X\to X_i$, for every $i\in I$ be the projection maps. Then the {\em product coarse structure} $\E=\Pi_i\E_i$ is defined by the base
$$
\mathcal B=\bigg\{\bigcap_{i\in I}(p_i\times p_i)^{-1}(E_i)\mid E_i\in\E_i,\,\forall i\in I\bigg\}.
$$

Let us now introduce a class of coarse spaces. A coarse space $(X,\E)$ is {\em cellular} if, for every $E\in\E$, $E^\square:=\bigcup_{n\in\N}E^n\in\E$, where, for every $n\in\N$, $E^n$ is obtained by composing $E$ with itself $n$ times. Cellular coarse spaces are precisely those with asymptotic dimension $0$ (see \cite{ProZa}).

\subsection{Coarse groups}\label{sub:cg}

In this paper we are interested in coarse structures on (topological) groups that agree with the (topological and) algebraic structure of the spaces.

If $G$ is a group and $g\in G$, we define the {\em left-shift $s_g^\lambda\colon G\to G$} and the {\em right-shift $s_g^\rho\colon G\to G$} as follows: for every $x\in G$, $s_g^\lambda(x)=gx$ and $s_g^\rho(x)=xg$. The following property of left-shifts is easy to check: 

\begin{proposition}\label{prop:compatible*}
	Let $G$ be a group and $\E$ be a coarse structure on it. Then the following properties are equivalent:
  \begin{compactenum}[(i)]
 \item for every $E\in\E$, $GE:=\{(gx,gy)\mid g\in G,\,(x,y)\in E\}\in\E$;
 \item the family $\mathcal S_G^\lambda:=\{s_g^\lambda\mid g\in G\}$ is uniformly bornologous, i.e., for every $E\in\E$, there exists $F\in\E$ such that, for every $g\in G$, $(s_g^\lambda\times s_g^\lambda)(E)\subseteq F$.
  \end{compactenum}       
\end{proposition}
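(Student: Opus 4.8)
The plan is to derive both implications directly from the definition of a uniformly bornologous family, after first recording the purely set-theoretic identity
\[
GE=\bigcup_{g\in G}(s_g^\lambda\times s_g^\lambda)(E),
\]
which holds because $(s_g^\lambda\times s_g^\lambda)(E)=\{(gx,gy)\mid(x,y)\in E\}$ for each fixed $g\in G$. With this in hand, the proposition reduces to translating ``the union lies in $\E$'' into ``there is a common entourage bounding every member of the union'' and back.

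For (i)$\Rightarrow$(ii), fix $E\in\E$ and put $F:=GE$, which belongs to $\E$ by hypothesis. The displayed identity gives $(s_g^\lambda\times s_g^\lambda)(E)\subseteq GE=F$ for every $g\in G$, so the single entourage $F$ witnesses that $\mathcal S_G^\lambda$ is uniformly bornologous. Conversely, for (ii)$\Rightarrow$(i), fix $E\in\E$; uniform bornologousness of $\mathcal S_G^\lambda$ yields $F\in\E$ with $(s_g^\lambda\times s_g^\lambda)(E)\subseteq F$ for all $g\in G$. Taking the union over $g\in G$ and invoking the displayed identity gives $GE\subseteq F$, and since $\E$ is an ideal, hence closed under passing to subsets (Definition \ref{Def:Roe}(ii)), we conclude $GE\in\E$.

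I expect no real obstacle here: each direction is essentially one line once the identity for $GE$ is observed. The only subtlety worth stating carefully is the order of quantifiers in the definition of a uniformly bornologous family — the entourage $F$ must be chosen uniformly in $g$, independently of $g$ — but this is exactly what is needed to identify the union $\bigcup_{g\in G}(s_g^\lambda\times s_g^\lambda)(E)$ with a subset of a fixed entourage, so the two formulations match verbatim.
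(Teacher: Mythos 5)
Your proof is correct; the paper itself omits a proof, stating only that the property ``is easy to check,'' and your argument via the identity $GE=\bigcup_{g\in G}(s_g^\lambda\times s_g^\lambda)(E)$ together with closure of $\E$ under subsets is exactly the intended routine verification.
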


\begin{definizione}
	A coarse structure $\E$ on a group $G$ is said to be a {\em left group coarse structure}, if it has the equivalent properties from Proposition \ref{prop:compatible*}. 
	A {\em left coarse group} is a pair $(G, \E)$ of a group $G$ and a left group coarse structure $\E$ on $G$. {\em Right group coarse structure} and {\em right coarse group} can be defined analogously. 
\end{definizione}

In order to define our leading example of left/right group coarse structures and left/right coarse groups (we shall see below that these are all possible coarse group structures and coarse groups) we need the following fundamental concept. 

\begin{definizione}\label{def:group_ideal}
Let $G$ be a group. A {\em group ideal} $\II$ (\cite{Sketchgroup}) is a family of subsets of $G$ containing the singleton $\{e\}$ such that:
\begin{compactenum}[(i)]
	\item $\II$ is an ideal;
	\item for every $K,J\in\II$, $KJ:=\{kj\mid k\in K,\,j\in J\}\in\II$;
	\item for every $K\in\II$, $K^{-1}:=\{k^{-1}\mid k\in K\}\in\II$.
\end{compactenum}
\end{definizione}
If $\II$ is a group ideal on $G$, $\bigcup\II$ is a subgroup of $G$. 

\begin{definizione}\label{def:ideal}
	Let $G$ be a group and $\II$ be a group ideal. For every $K\in\II$, we define
	$$
	E_K^\lambda:=G(\{e\}\times K):=\bigcup_{g\in G}(\{g\}\times gK).
	$$
	The family $\E_\II^\lambda:=\{E\subseteq G\times G\mid\exists K\in\II:\,E\subseteq E_K^\lambda\}$ is a left coarse group structure, called {\em left $\II$-group coarse structure}, and the pair $(G,\E_{\II}^\lambda)$ is a left coarse group, called {\em left $\II$-coarse group}. 
\end{definizione}
Note that the family $\{E_K^\lambda\mid K\in\II\}$ is a base of the $\II$-group coarse structure. Moreover, for every $K\in\II$ and $x\in G$, $E_K^\lambda[x]=xK$.

Similarly, we can define the {\em right $\II$-group coarse structure} $\E_\II^\rho$ as follows: it is induced by the base $\{E_K^\rho\mid K\in\II\}$, where
$$
E_K^\rho:=(\{e\}\times K)G:=\bigcup_{g\in G}(\{g\}\times Kg).
$$
 Then $(G,\E_{\II}^\rho)$ is called {\em right $\II$-coarse group}.

For every group $G$ and group ideal $\II$ on it, the left $\II$-group coarse structure and the right $\II$-group coarse structure are equivalent, as the following result shows.

\begin{proposition}\label{prop:inv_asym}
	Let $G$ be a group, $\II$ be a group ideal, and $\iota\colon G\to G$ such that $\iota(g)=g^{-1}$. Then $\iota\colon(G,\E_\II^\lambda)\to(G,\E_{\II}^\rho)$ is an asymorphsim. 
\end{proposition}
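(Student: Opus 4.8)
The plan is to exploit that $\iota$ is an involution, so $\iota^{-1}=\iota$ as a self-map of $G$; consequently, showing that $\iota$ is an asymorphism reduces to checking that $\iota$ is bornologous both as a map $(G,\E_\II^\lambda)\to(G,\E_\II^\rho)$ and as a map $(G,\E_\II^\rho)\to(G,\E_\II^\lambda)$. Since $\{E_K^\lambda\mid K\in\II\}$ is a base of $\E_\II^\lambda$ and $\{E_K^\rho\mid K\in\II\}$ is a base of $\E_\II^\rho$, and bornologousness need only be verified on a base, it is enough to compute the image of each basic entourage under $\iota\times\iota$.

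The key step is the identity $(\iota\times\iota)(E_K^\lambda)=E_{K^{-1}}^\rho$ for every $K\in\II$, and symmetrically $(\iota\times\iota)(E_K^\rho)=E_{K^{-1}}^\lambda$. To get it, I would unravel the definition $E_K^\lambda=\{(g,gk)\mid g\in G,\ k\in K\}$, apply $\iota\times\iota$ to obtain $\{(g^{-1},k^{-1}g^{-1})\mid g\in G,\ k\in K\}$, and then reindex by $h:=g^{-1}$, which ranges over all of $G$; since $k^{-1}$ ranges over $K^{-1}$, this set becomes $\{(h,k'h)\mid h\in G,\ k'\in K^{-1}\}=E_{K^{-1}}^\rho$. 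The other identity is obtained by the same computation with left and right interchanged.

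With this in hand the conclusion is immediate: by the axiom $K\in\II\Rightarrow K^{-1}\in\II$ in the definition of a group ideal, one has $E_{K^{-1}}^\rho\in\E_\II^\rho$ and $E_{K^{-1}}^\lambda\in\E_\II^\lambda$, so both maps send basic entourages to entourages and are therefore bornologous; hence $\iota$ is an asymorphism. In fact the computation shows a bit more, namely that $\iota\times\iota$ maps the distinguished base of $\E_\II^\lambda$ bijectively onto that of $\E_\II^\rho$, the corresponding bijection of index sets being $K\mapsto K^{-1}$ on $\II$.

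I do not expect any serious obstacle here; the only delicate point is the bookkeeping in the reindexing $g\mapsto g^{-1}$, where one must respect the possible noncommutativity of $G$ so as to land on the right-translate $K^{-1}h$ (yielding $E_{K^{-1}}^\rho$) rather than on $hK^{-1}$.
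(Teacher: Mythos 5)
Your proof is correct. The paper states this proposition without proof, and your argument is exactly the intended one: the computation $(\iota\times\iota)(E_K^\lambda)=E_{K^{-1}}^\rho$ (and its mirror image), combined with the closure of $\II$ under inversion and the fact that $\iota$ is an involution, is the standard verification, and your care with noncommutativity in the reindexing step is exactly where the content lies.
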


The following fact from \cite{nicasros2} shows that every left coarse group can be obtained as in Definition \ref{def:ideal} above: 

\begin{proposition}\label{prop:compatible}
	Let $G$ be a group and $\E$ be a coarse structure on it. Then the following properties are equivalent:
   \begin{compactenum}[(i)] 
	\item $(G,\E)$ is a left coarse group; 
	\item $\E=\E_{\II}^\lambda$, where $\II:=\{E[e]\mid E\in\E\}$.
   \end{compactenum}       
\end{proposition}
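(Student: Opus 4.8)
The plan is to prove the two implications separately, the bulk of the work lying in (i)$\Rightarrow$(ii). The computational key, used throughout, is the description of the basic entourages as ``difference sets'': $(x,y)\in E_K^\lambda$ precisely when $x^{-1}y\in K$; and, for any coarse structure $\E$ on $G$, the section of the left-translate $GE$ (as in Proposition~\ref{prop:compatible*}(i)) at the identity is $(GE)[e]=\{x^{-1}y\mid(x,y)\in E\}$, obtained by taking $g=x^{-1}$ in the definition of $GE$.

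For (ii)$\Rightarrow$(i) I would simply verify condition~(i) of Proposition~\ref{prop:compatible*} directly. Given $E\in\E=\E_\II^\lambda$, pick $K\in\II$ with $E\subseteq E_K^\lambda$; since left-multiplying the pairs $(g,gk)\in E_K^\lambda$ by any $h\in G$ only reindexes them, one has $GE_K^\lambda=E_K^\lambda$, hence $GE\subseteq E_K^\lambda$ and therefore $GE\in\E_\II^\lambda=\E$. (Alternatively, once $\II$ is seen to be a group ideal, this is just Definition~\ref{def:ideal}.)

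For (i)$\Rightarrow$(ii) I would first check that $\II:=\{E[e]\mid E\in\E\}$ is a group ideal in the sense of Definition~\ref{def:group_ideal}. The easy axioms: $\{e\}=\Delta_G[e]\in\II$; $\II$ is closed under finite unions since $(E\cup E')[e]=E[e]\cup E'[e]$; and $\II$ is closed under subsets since $L\subseteq E[e]$ forces $\{e\}\times L\subseteq E$, so $\{e\}\times L\in\E$ and $L=(\{e\}\times L)[e]\in\II$. The remaining two axioms use left-invariance together with the downward closure just proved: for $K=E[e]$, the entourage $(GE)^{-1}$ lies in $\E$ and $((GE)^{-1})[e]\supseteq K^{-1}$; and for $K=E[e]$, $J=E'[e]$, the entourage $E\circ(GE')$ lies in $\E$ and its section at $e$ contains $KJ$, because $(e,k)\in E$ and $(k,kj)\in GE'$ give $(e,kj)\in E\circ GE'$.

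It remains to prove the equality $\E=\E_\II^\lambda$. For $\E_\II^\lambda\subseteq\E$ it suffices, by downward closure of $\E$, to show each basic entourage lies in $\E$: writing $K=E_0[e]$ with $E_0\in\E$ gives $\{e\}\times K\subseteq E_0$, whence $E_K^\lambda=G(\{e\}\times K)\subseteq GE_0\in\E$. For $\E\subseteq\E_\II^\lambda$, given $E\in\E$ I would take $K:=(GE)[e]\in\II$: then $x^{-1}y\in K$ for every $(x,y)\in E$, i.e.\ $E\subseteq E_K^\lambda$, so $E\in\E_\II^\lambda$. I do not expect a genuine obstacle anywhere in this argument, which is bookkeeping with entourages; the only points needing care are the correct placement of the left-shifts in the ideal axioms for $\II$ (routing those arguments through $GE$ rather than through $E$ itself) and the choice $K=(GE)[e]$ in the last inclusion, which is the natural move that makes the two structures match.
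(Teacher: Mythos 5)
Your proof is correct: the identity $(GE)[e]=\{x^{-1}y\mid (x,y)\in E\}$ is the right computational lever, the verification of the group-ideal axioms for $\II$ (routing the inverse and product axioms through $GE$ and then using the downward closure you established first) is sound, and the choice $K=(GE)[e]$ does give $E\subseteq E_K^\lambda$, completing both inclusions. The paper itself imports this proposition from \cite{nicasros2} without proof, so there is no in-text argument to compare against, but yours is the natural one and fills that gap correctly.
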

A similar result can be stated for right coarse structures.

There is another way to describe the group ideal of Proposition \ref{prop:compatible}. If $G$ is a group, the map $\pi_G^\lambda\colon G\times G\to G$ such that, for every $(g,h)\in G\times G$, $\pi_G^\lambda(g,h):=h^{-1}g$ is called ({\em left}) {\em shear map} (\cite{nicasros2}). If $\E$ is a left coarse structure satisfying the properties of Proposition \ref{prop:compatible}, then $\II=\{\pi_G^\lambda(E)\mid E\in\E\}$.

Justified by Propositions \ref{prop:inv_asym} and \ref{prop:compatible}, in the sequel we will always refer to left group coarse structures and left coarse groups, if it is not otherwise stated, and thus we call them briefly group coarse structures (and coarse groups) if there is no risk of ambiguity. 

According to Proposition \ref{prop:compatible}, coarse groups are equivalently described in terms of group ideals. This is why it is necessary to provide examples of group ideals.

\begin{example}\label{ex:group_c_s}  Let $G$ be a group.
 \begin{compactenum}[(i)]
    \item The sigleton $\{\{e\}\}$ is a group ideal and the $\{\{e\}\}$-group coarse structure is the {\em discrete coarse structure}, i.e., the one that contains only the subsets of the diagonal.
    \item On the opposite side we have the group ideal $\mathcal P(G)$, that induces the {\em bounded coarse structure}, i.e., every subset of $G\times G$ is an entourage.
    \item The family $[G]^{<\omega}$ of all finite subsets of $G$ is a group ideal and the $[G]^{<\omega}$-coarse structure is called {\em finitary-group coarse structure}.
   \item We want to generalise the previous example. For a given infinite cardinal $\kappa$, the family $[G]^{<\kappa}:=\{K\subseteq G\mid\lvert K\rvert<\kappa\}$ is a group ideal. The $[G]^{<\kappa}$-group coarse structure is called {\em $\kappa$-group coarse structure}. Then the finitary-group coarse structure is the $\omega$-group coarse structure.
   \item Let $\tau$ be a group topology of $G$. Define $\mathcal C(G)$ as the family of all compact subsets of $G$. Then $\cl(\mathcal C(G))$ coincides with
 the family $\rC(G)$ of all relatively compact subsets of $G$ is a group ideal and the $\rC(G)$-coarse structure is called {\em compact-group coarse structure}. 
   \item Let $d$ be a left-invariant pseudo-metric on $G$. Then the family $\mathcal B_d:=\{A\subseteq G\mid\exists R\ge 0:\,A\subseteq B_d(e,R)\}$ is a group ideal and the $\mathcal B_d$-group coarse structure is called {\em metric-group coarse structure}.
   \item Let $G$ be a topological group. The group ideal 	
 $$
	\mathcal{OB}:=\{A\subseteq G\mid\forall d\text{ left-invariant continuous pseudo-metric, $A$ is $d$-bounded from $e$}\}.
 $$
was defined in \cite{ros}, where other characterisations of $\mathcal{OB}$ are provided. Then
 $$
	\E_L:=\E_{\mathcal{OB}}=\bigcap\{\E_{\BB_d}\mid\text{$d$ is a left-invariant continuous pseudo-metric}\}
 $$
is defined in \cite{ros} and named {\em left-coarse structure}. The group ideal $\mathcal{OB}$ contains the family $\mathcal C(G)$ (and thus $\rC(G)$) and it coincides with $\rC(G)$ if $G$ is locally compact and $\sigma$-compact (\cite[Corollary 2.8]{ros}). However, there exist locally compact groups $G$ with $\rC(G)\subsetneq\mathcal{OB}$. For example, the group $Sym(\N)$ of all permutations of $\N$ endowed with the discrete topology has $\rC(Sym(\N))=[Sym(\N)]^{<\omega}$, while $\mathcal{OB}=\mathcal P(Sym(\N))$ (see \cite[Example 2.16]{ros}).
  \item For an infinite cardinal $\kappa$, a topological space is {\em $\kappa$-Lindel\"of} if every open cover has a subcover of size strictly less than $\kappa$ (so $\omega$-Lindel\"of coincides with compact, while $\omega_1$-Lindel\"of is the standard Lindel\"of property). For a topological group $G$, denote by $\kappa\mbox{-}\mathcal L(G)$ the family of all $\kappa$-Lindel\"of subsets of $G$. Then $\cl(\kappa\mbox{-}\mathcal L(G))$ is a group ideal and the $\cl(\kappa\mbox{-}\mathcal L(G))$-group coarse structure is called {\em $\kappa$-Lindel\"of-group coarse structure}.
 \end{compactenum} 
\end{example}

Note that, if $G$ is a discrete group, then, for every infinite cardinal $\kappa$, the $\kappa$-Lindel\"of-group coarse structure coincides with the $\kappa$-group coarse structure. In particular, the compact-group coarse structure coincides with the finitary-group coarse structure.

According to Proposition \ref{prop:inv_asym}, for every group $G$ and group ideal $\II$, $(G,\E_{\II}^\lambda)$ and $(G,\E_{\II}^\rho)$ are asymorphic. However, 
these two group coarse structures need not coincide in general. It will be useful in the sequel to characterise those group ideals for which these two group coarse structures coincide.

\begin{proposizione}\label{prop:coarseuniformmultiplication}
Let $G$ be a group and $\E$ a coarse structure on it. If the group operation $\mu\colon G\times G\to G$ is bornologous, then $\E$ is both a left and a right group coarse structure.
\end{proposizione}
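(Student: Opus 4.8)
\textit{Proof plan.} The plan is to verify directly condition (i) of Proposition \ref{prop:compatible*} (and its evident right-handed analogue) using the explicit base of the product coarse structure. Recall that the source $G\times G$ of $\mu$ is understood to carry the product coarse structure $\E\times\E$, a base of which consists of the sets
$$
(p_1\times p_1)^{-1}(E_1)\cap(p_2\times p_2)^{-1}(E_2)=\{((x_1,x_2),(y_1,y_2))\mid (x_1,y_1)\in E_1,\ (x_2,y_2)\in E_2\},
$$
where $E_1,E_2\in\E$ and $p_1,p_2\colon G\times G\to G$ are the projections. Saying that $\mu$ is bornologous precisely means that $(\mu\times\mu)$ carries each such base entourage into $\E$.

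First I would prove that $\E$ is a left group coarse structure by checking that $GE\in\E$ for every $E\in\E$, which by Proposition \ref{prop:compatible*} suffices. Fix $E\in\E$ and consider the base entourage
$$
F:=(p_1\times p_1)^{-1}(\Delta_G)\cap(p_2\times p_2)^{-1}(E)\in\E\times\E,
$$
legitimate since $\Delta_G\in\E$. Unwinding the definitions, $F=\{((x_1,x_2),(x_1,y_2))\mid x_1\in G,\ (x_2,y_2)\in E\}$, hence
$$
(\mu\times\mu)(F)=\{(x_1x_2,\,x_1y_2)\mid x_1\in G,\ (x_2,y_2)\in E\}=GE.
$$
Since $\mu$ is bornologous, $GE=(\mu\times\mu)(F)\in\E$, as desired.

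Symmetrically, taking $F':=(p_1\times p_1)^{-1}(E)\cap(p_2\times p_2)^{-1}(\Delta_G)$ gives $(\mu\times\mu)(F')=\{(x_1x_2,\,y_1x_2)\mid (x_1,y_1)\in E,\ x_2\in G\}=EG\in\E$, which is exactly the right-handed counterpart of condition (i) of Proposition \ref{prop:compatible*}; therefore $\E$ is also a right group coarse structure, and we are done.

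I do not expect a genuine obstacle here: the entire argument hinges on the simple observation that pinning one coordinate of a product entourage to the diagonal $\Delta_G$ turns its image under $(\mu\times\mu)$ into the left (resp. right) translate $GE$ (resp. $EG$) of $E$. The only point needing a modicum of care is the set-theoretic identity $(\mu\times\mu)(F)=GE$, which is obtained by the routine unwinding of definitions indicated above.
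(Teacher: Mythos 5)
Your proposal is correct and follows exactly the paper's own argument: the paper likewise observes that $GE=(\mu\times\mu)(\Delta_X\times E)$ and $EG=(\mu\times\mu)(E\times\Delta_X)$ and then invokes Proposition \ref{prop:compatible*}. Your version merely spells out the set-theoretic unwinding in more detail.
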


\begin{proof} For every $E\in\E$, $GE=(\mu\times\mu)(\Delta_X\times E)\in\E$ and $EG=(\mu\times\mu)(E\times\Delta_X)$, and thus the claim follows from Proposition \ref{prop:compatible*}. 
\end{proof}

If $K$ is a subset of  a group $G$, and $g\in G$, we define $K^g:=g^{-1}Kg$ and $K^G:=\bigcup_{h\in G}K^h$. A group ideal $\II$ is {\em uniformly bilateral} if $K^G\in\II$ for every $K\in\II$. 
Note that, for every $K\subseteq G$ and $g\in G$, 

\begin{equation}\label{eq:unif_bilateral}
Kg=gg^{-1}Kg=gK^{g}\subseteq gK^G. 
\end{equation}

Similarly, if $E\subseteq G\times G$, and $g\in G$ be an element,  we define $E^g:=\{(g^{-1}xg,g^{-1}yg)\mid(x,y)\in E\}$ and $E^G:=\bigcup_{h\in G}E^h$. We say that a coarse structure $\E$ on $G$ is {\em uniformly invariant} if $E^G\in\E$ for every $E\in\E$. 

The following proposition is the analogue in realm of coarse groups of \cite[Proposition 1.2]{HerPro}.

\begin{proposizione}\label{prop:coarseuniformoperation}
	Let $G$ be a group and $\E$ is a left $\II$-group coarse structure on it, for some group ideal $\II$ on $G$. Then the following conditions are equivalent:
	\begin{compactenum}[(i)]
		\item the inverse map $i\colon (G,\E)\to (G,\E)$ is bornologous;
		\item the multiplication map $\mu\colon(G\times G,\E\times\E)\to(G,\E)$ is bornologous;
		\item $\E$ is also a right $\II$-group coarse structure;
		\item $\E$ is uniformly invariant;
		\item $\II$ is uniformly bilateral.
	\end{compactenum}
	In particular, when the above conditions are fulfilled, the subgroup $\bigcup\II$ is normal.
\end{proposizione}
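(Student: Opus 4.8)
The plan is to close a cycle $(\mathrm{i})\Rightarrow(\mathrm{v})\Rightarrow(\mathrm{iii})\Rightarrow(\mathrm{ii})\Rightarrow(\mathrm{i})$ and to prove the side equivalence $(\mathrm{iv})\Leftrightarrow(\mathrm{v})$ separately; normality of $\bigcup\II$ will then drop out of $(\mathrm{v})$. First I would record three elementary computations with the canonical base $\{E_K^\lambda\mid K\in\II\}$ of $\E=\E_\II^\lambda$ (recall that, by Proposition~\ref{prop:compatible} and since $\II$ is an ideal, $\II=\{E[e]\mid E\in\E\}$, while $E_K^\lambda[x]=xK$ and $E_K^\rho[x]=Kx$): (a) $(i\times i)(E_K^\lambda)=E^\rho_{K^{-1}}$; (b) $(E_K^\lambda)^g=E^\lambda_{K^g}$ for every $g\in G$, hence $(E_K^\lambda)^G=E^\lambda_{K^G}$ because $\bigcup_{g}K^g=K^G$; (c) the identities $xK=(xKx^{-1})x=K^{x^{-1}}x$ and $Kx=x(x^{-1}Kx)=xK^x$, which compare left and right balls up to a conjugation.

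Then $(\mathrm{i})\Rightarrow(\mathrm{v})$: if $i$ is bornologous then $E^\rho_{K^{-1}}=(i\times i)(E_K^\lambda)\in\E=\E_\II^\lambda$, so $E^\rho_{K^{-1}}\subseteq E_J^\lambda$ for some $J\in\II$; evaluating at every $x\in G$ gives $x^{-1}K^{-1}x\subseteq J$, i.e.\ $(K^{-1})^G=(K^G)^{-1}\subseteq J$, whence $K^G\in\II$ by the group-ideal axioms. For $(\mathrm{iv})\Leftrightarrow(\mathrm{v})$, since $\{E_K^\lambda\}$ is a base, $\E$ is uniformly invariant iff $(E_K^\lambda)^G=E^\lambda_{K^G}\in\E$ for every $K\in\II$, iff $K^G=E^\lambda_{K^G}[e]\in\II$ for every $K\in\II$, which is $(\mathrm{v})$. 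For $(\mathrm{v})\Rightarrow(\mathrm{iii})$, from $K^x,K^{x^{-1}}\subseteq K^G$ and (c) one gets $E_K^\rho\subseteq E^\lambda_{K^G}$ and $E_K^\lambda\subseteq E^\rho_{K^G}$; as $K^G\in\II$, these inclusions give $\E_\II^\rho\subseteq\E_\II^\lambda$ and $\E_\II^\lambda\subseteq\E_\II^\rho$, i.e.\ $\E$ is also the right $\II$-group coarse structure.

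It remains to close the loop. For $(\mathrm{iii})\Rightarrow(\mathrm{ii})$, observe first that $(\mathrm{iii})$ already implies $(\mathrm{v})$: $E_K^\rho\in\E_\II^\rho=\E_\II^\lambda$ forces $Kx\subseteq xJ$ for all $x$ and some $J\in\II$, hence $K^G\subseteq J\in\II$. Now a base entourage of $\E\times\E$ is of the form $\{((a,b),(ak,bl))\mid a,b\in G,\,k\in K,\,l\in L\}$ with $K,L\in\II$, and its image under $\mu\times\mu$ is the set of pairs $(ab,\,akbl)$; writing $akbl=ab\cdot(b^{-1}kb)l$ with $b^{-1}kb\in K^G$, this image lies inside $E^\lambda_{K^G L}\in\E$, so $\mu$ is bornologous. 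For $(\mathrm{ii})\Rightarrow(\mathrm{i})$, Proposition~\ref{prop:coarseuniformmultiplication} makes $\E$ a right group coarse structure, so the right-hand analogue of Proposition~\ref{prop:compatible} (whose canonical ideal is again $\{E[e]\mid E\in\E\}=\II$) yields $\E=\E_\II^\rho$; then $(i\times i)(E_K^\lambda)=E^\rho_{K^{-1}}\in\E_\II^\rho=\E$, and since $\{E_K^\lambda\}$ is a base of $\E$, $i$ is bornologous. Finally, assuming $(\mathrm{v})$, for $x\in\bigcup\II$, say $x\in K\in\II$, one has $g^{-1}xg\in K^g\subseteq K^G\in\II$ for every $g\in G$, so $\bigcup\II$ is normal (it is a subgroup by the remark after Definition~\ref{def:group_ideal}). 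I expect $(\mathrm{iii})\Rightarrow(\mathrm{ii})$ to be the one step that is not purely mechanical: one must unwind the product coarse structure and notice that reabsorbing the conjugate $b^{-1}kb$ into the ideal is exactly what uniform bilaterality of $\II$ delivers; everything else is routine bookkeeping with base entourages.
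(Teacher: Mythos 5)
Your proof is correct: all five implications and the final normality claim check out, and the key computations $(i\times i)(E_K^\lambda)=E^\rho_{K^{-1}}$, $(E_K^\lambda)^G=E^\lambda_{K^G}$ and $Kx=xK^x$ are exactly the right reductions to statements about the ideal $\II$. The paper omits the proof entirely (it only remarks that the proposition is the coarse-group analogue of a result of Hern\'andez and Protasov), so there is nothing to compare against; your direct verification via base entourages is the expected argument and fills that gap completely.
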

A coarse group with uniformly invariant group coarse structure will be called {\em bilateral coarse group}.

In particular, for every abelian group and every group ideal on it, the conditions of Proposition \ref{prop:coarseuniformoperation} are satisfied. 
It is natural to expect that this remains true for groups close to be abelian, e.g., for groups $G$ having large centre with respect to the  finitary-group coarse structure
of $G$. This means that $Z(G)$ has finite index in $G$. Then, by Shur's Theorem \cite{rob}, the commutator subgroup $G'$ is finite. As we shall below, this implies that $\E_\II^\lambda = \E_\II^\rho$
(see Corollary \ref{quot}). Since finiteness of $G'$ can still be considered as a rather strong restraint, we consider now a weaker condition 
(but it ensures uniform invariance only of some group coarse structures). Recall that a group $G$ is called an {\em $FC$-group}, if all conjugacy classes $x^G$ are finite. 
Obviously, $G$ is an $FC$-group, if $G'$ is finite. 

The next proposition shows that this commutativity condition is the precise measure ensuring uniform invariance of the  finitary-group coarse structure. 
Its easy proof will be omitted. 

\begin{proposizione}\label{criterion}
For every group $G$ the following conditions are equivalent: 
\begin{compactenum}[(i)]
\item $G$ is an $FC$-group; 
\item the finitary-group coarse structure of $G$ is uniformly invariant; 
\item for every infinite cardinal $\kappa$ the $\kappa$-group coarse structure of $G$ is uniformly invariant.
\end{compactenum}  
\end{proposizione}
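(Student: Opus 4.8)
The statement to prove is Proposition~\ref{criterion}: for a group $G$, being an $FC$-group is equivalent to uniform invariance of the finitary-group coarse structure, and also equivalent to uniform invariance of every $\kappa$-group coarse structure. The natural route is to go through the characterisation in Proposition~\ref{prop:coarseuniformoperation}, which says that for a left $\II$-group coarse structure $\E=\E_\II^\lambda$, uniform invariance of $\E$ is equivalent to the group ideal $\II$ being uniformly bilateral, i.e.\ $K^G\in\II$ for every $K\in\II$. Thus the whole proposition reduces to translating ``$[G]^{<\kappa}$ is uniformly bilateral'' (resp.\ for $\kappa=\omega$, ``$[G]^{<\omega}$ is uniformly bilateral'') into the $FC$-condition. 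I would organise the argument as the cycle (i)$\Rightarrow$(iii)$\Rightarrow$(ii)$\Rightarrow$(i).

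\textbf{(i)$\Rightarrow$(iii).} Assume $G$ is an $FC$-group and fix an infinite cardinal $\kappa$; we must show $[G]^{<\kappa}$ is uniformly bilateral. Let $K\in[G]^{<\kappa}$, so $|K|<\kappa$. Then $K^G=\bigcup_{k\in K}k^G$ is a union of fewer than $\kappa$ conjugacy classes, each of which is finite by the $FC$-hypothesis; hence $|K^G|\le |K|\cdot\omega<\kappa$ (using $\kappa$ infinite), so $K^G\in[G]^{<\kappa}$. By Proposition~\ref{prop:coarseuniformoperation}, (v)$\Rightarrow$(iv), the $\kappa$-group coarse structure $\E_{[G]^{<\kappa}}^\lambda$ is uniformly invariant.

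\textbf{(iii)$\Rightarrow$(ii).} This is the trivial implication: take $\kappa=\omega$.

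\textbf{(ii)$\Rightarrow$(i).} Assume the finitary-group coarse structure $\E_{[G]^{<\omega}}^\lambda$ is uniformly invariant. By Proposition~\ref{prop:coarseuniformoperation}, (iv)$\Rightarrow$(v), the group ideal $[G]^{<\omega}$ is uniformly bilateral. Now fix $x\in G$ and apply this to the singleton $K=\{x\}\in[G]^{<\omega}$: we get $x^G=\{x\}^G\in[G]^{<\omega}$, i.e.\ the conjugacy class $x^G$ is finite. Since $x$ was arbitrary, $G$ is an $FC$-group. This closes the cycle, and the three conditions are equivalent.

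\textbf{Main obstacle.} There is essentially no serious obstacle here — as the text itself notes, the proof is easy and is omitted in the paper. The only point requiring a moment's care is the cardinal arithmetic in (i)$\Rightarrow$(iii): one must check that a union of strictly fewer than $\kappa$ finite sets still has size $<\kappa$, which holds because $\kappa$ is an infinite cardinal (so $\kappa$ is in particular regular-independent for this bound: $\lambda<\kappa$ and each block finite gives $\lambda\cdot\omega\le\max(\lambda,\omega)<\kappa$). Everything else is a direct invocation of Proposition~\ref{prop:coarseuniformoperation} together with the evident observation that a single element generates, as its conjugacy class, exactly the object one needs to control.
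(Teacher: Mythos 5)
Your proof is correct, and since the paper explicitly omits the proof as easy, your argument via Proposition~\ref{prop:coarseuniformoperation} (reducing uniform invariance to uniform bilaterality of $[G]^{<\kappa}$, then checking singletons for (ii)$\Rightarrow$(i) and the cardinality bound $|K^G|\le\max(|K|,\omega)<\kappa$ for (i)$\Rightarrow$(iii)) is exactly the intended one. Nothing is missing.
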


Thanks to Proposition \ref{prop:coarseuniformoperation} we can easily find a coarse group for which the multiplication is not bornologous. It is the aim of 
\begin{example}\label{ex:non_borno}
	Consider the free group $F_2$, generated by $\{a,b\}$.
	\begin{compactenum}[(i)]
		\item Let $\II=[\langle a\rangle]^{<\omega}$. Then, if we endow $F_2$ with $\E_{\II}^\lambda$, $\mu$ is not bornologous since $\bigcup\II$ is not normal.
		\item If we endow $F_2$ with the finitary-group coarse structure, $\mu$ is not bornologous by item (i) (or by Proposition\ref{criterion}, as $F_2$ is not $FC$, e.g., $\{a\}^G$ is not finite).
	\end{compactenum}
\end{example}

 The following example shows that the compact-group coarse structure of a locally compact group need not be uniformly invariant.

\begin{esempio} 
Fix a prime number $p$ and let $\theta\colon \Q_p \to \Q_p$ by the multiplication by $p$ in the $p$-adic numbers. Then $\theta$ is a topological automorphism of $\Q_p$. 
Let $G := \Q_p \rtimes \langle\theta \rangle$ by the semidirect product defined by means of action determined by this automorphism. 
Let $K=\Z_p$ be the compact group of $p$-adic integers. Then $K$ is a compact subgroup of $G$, yet $K^G$ coincides with $\Q_p$, so it is not relatively compact. Hence, $\rC(G)$ is not uniformly bilateral.
\end{esempio}

\section{Description of large scale properties by group ideals}\label{sec:group_ideals}

Group ideals are very useful to characterise some large-scale properties of spaces or maps. 

 The first example regards connected components.
 
   If $(G,\E_{\II})$  is a coarse group, then $\bigcup\II$ is a subgroup of $G$. One can associate a group ideal on $G$ to every subgroup $H\leq G$ in the following way: $\II_H=\{A\subseteq G\mid A\subseteq H\}$. The $\II_H$-group coarse structure is an example of what we will call {\em linear coarse structures}.
In particular, we see that for a coarse group, $\mathcal Q_G(e)=\bigcup\II$ is a subgroup of $G$, which is not normal in general.
In fact, we can pick a non-normal subgroup $H$ of a group $G$ and then $\bigcup\II_H=H$ is not normal (see, for another example, Example \ref{ex:non_borno}). Note that, in topological groups, the connected component of the identity is a normal subgroup. In particular, $(G,\E_\II)$ is connected if and only if $G=\bigcup\II$.

 Let $f,g\colon X\to(G,\E_{\II})$ be two maps from a set to a coarse group. Then $f$ and $g$ are close if and only if there exists $M\in\II$ such that, for every $x\in X$, $(f(x),g(x))\in E_M$ or, equivalently, $g(x)\in f(x)M$. In that situation, for the sake of simplicity, we write $f\sim_M g$. By choosing  $M= M ^{-1}$ symmetric, we can achieve to have $f\sim_M g$ precisely when $g\sim_M f$

One can obtain useful characterisations of morphisms in terms of group ideals as in Propositions \ref{prop:homomorphism_group_ideal} and \ref{prop:homo_unif_bounded_copres}.


\begin{proposizione}{\rm \cite{nicasros2}}\label{prop:homomorphism_group_ideal}
Let $G$ and $H$ be two groups and $f\colon G\to H$ be a homomorphism between them. Let $\II_G$ and $\II_H$ be two group ideals on $G$ and $H$ respectively. Then:
\begin{compactenum}[(i)]
\item $f\colon(G,\E_{\II_G})\to(H,\E_{\II_H})$ has uniformly bounded fibers if and only if $\ker f\in\II_G$;
\item $f\colon(G,\E_{\II_G})\to(H,\E_{\II_H})$ is bornologous if and only if $f(I)\in\II_H$, for every $I\in\II_G$;
\item $f\colon(G,\E_{\II_G})\to(H,\E_{\II_H})$ is effectively proper if and only if $f^{-1}(J)\in\II_G$, for every $J\in\II_H$ (i.e., $f$ is proper).
\end{compactenum}
\end{proposizione}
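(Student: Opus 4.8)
The plan is to unwind each of the three equivalences directly from the definitions, using the fact that the coarse structures $\E_{\II_G}$ and $\E_{\II_H}$ have the canonical bases $\{E_K^\lambda \mid K \in \II_G\}$ and $\{E_L^\lambda \mid L \in \II_H\}$, together with the identities $E_K^\lambda[x] = xK$ and $\pi_G^\lambda(E_K^\lambda) = K$. The key observation that makes everything mechanical is that, since $f$ is a homomorphism, the map $f \times f$ interacts well with the left-translation structure of the basic entourages: for $K \subseteq G$ one has $(f\times f)(E_K^\lambda) = \bigcup_{g \in G}(\{f(g)\}\times f(g)f(K)) = \bigcup_{h \in f(G)}(\{h\}\times h f(K)) \subseteq E_{f(K)}^\lambda$, and conversely $(f\times f)^{-1}(E_L^\lambda) \supseteq E_{f^{-1}(L \cap f(G))}^\lambda$ after a short computation with $\pi_G^\lambda$ and $\pi_H^\lambda$. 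These two inclusions are the workhorses; once they are recorded, each item is a one-line argument.

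For item (ii): $f$ is bornologous iff $(f\times f)(E)\in\E_{\II_H}$ for every $E\in\E_{\II_G}$, and since the $E_K^\lambda$ form a base and $\E_{\II_H}$ is closed under subsets, this holds iff $(f\times f)(E_K^\lambda)\in\E_{\II_H}$ for every $K\in\II_G$; by the computation above this is equivalent to $f(K)\in\II_H$ for every $K\in\II_G$. For item (iii): $f$ is effectively proper iff $(f\times f)^{-1}(E_L^\lambda)\in\E_{\II_G}$ for every $L\in\II_H$; one computes $(f\times f)^{-1}(E_L^\lambda)=\{(x,y)\mid f(y)^{-1}f(x)\in L\}=\{(x,y)\mid \pi_G^\lambda(x,y)\in f^{-1}(L)\}$, and since $f^{-1}(L)\ni e$ is closed under the group-ideal operations whenever $L$ is (being a preimage of a homomorphism), this set lies in $\E_{\II_G}$ iff $f^{-1}(L)\in\II_G$. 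The parenthetical equivalence with properness follows because the bounded subsets of $(H,\E_{\II_H})$ bounded from $e$ are exactly the members of $\II_H$ (as $E_L^\lambda[e]=L$), and $\E_{\II_H}$ is connected on its connected components, so properness reduces to the condition $f^{-1}(L)\in\II_G$ for all $L\in\II_H$.

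For item (i): by the remark following Definition \ref{def:Mor}, $f$ has uniformly bounded fibers iff $(f\times f)^{-1}(\Delta_H)\in\E_{\II_G}$; since $f$ is a homomorphism, $(f\times f)^{-1}(\Delta_H)=\{(x,y)\mid f(x)=f(y)\}=\{(x,y)\mid x y^{-1}\in\ker f\}$, which equals $E_{\ker f}^\rho$ (or, after applying the asymorphism $\iota$ of Proposition \ref{prop:inv_asym}, is controlled by $E_{\ker f}^\lambda$ up to the basic entourages), so it belongs to $\E_{\II_G}$ iff $\ker f\in\II_G$. I do not anticipate a genuine obstacle here; the only point requiring a little care is the bookkeeping in item (i) between the left and right versions of the shear map (equivalently, keeping track of whether the fiber set is $E_{\ker f}^\lambda$ or $E_{\ker f}^\rho$), but since $\ker f$ is a normal subgroup this distinction is immaterial — both lie in $\E_{\II_G}$ exactly when $\ker f\in\II_G$.
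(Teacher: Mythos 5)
Your proposal is correct. Note, however, that the paper does not actually prove this proposition: it is quoted from \cite{nicasros2} without proof, so there is nothing internal to compare against. Your direct verification via the basic entourages $E_K^\lambda$ is the standard (and essentially only) argument: the two workhorse identities $(f\times f)(E_K^\lambda)\subseteq E_{f(K)}^\lambda$ and $(f\times f)^{-1}(E_L^\lambda)=E_{f^{-1}(L)}^\lambda$ (valid because $f$ is a homomorphism), together with $(f\times f)^{-1}(\Delta_H)=E_{\ker f}^\lambda$, do reduce all three items to the stated conditions on the group ideals. The only blemishes are bookkeeping ones you already flag as immaterial: $\pi_G^\lambda(E_K^\lambda)$ is really $K^{-1}$ rather than $K$, and your fiber set is written with $xy^{-1}$ rather than $x^{-1}y$; since group ideals are closed under inversion and $\ker f$ is normal, neither affects the conclusion. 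For the parenthetical in (iii) (proper $\Rightarrow$ the ideal condition), the cleanest route is to note that one may assume $e\in L$, so $e\in f^{-1}(L)$ and boundedness of $f^{-1}(L)$ from $e$ gives $f^{-1}(L)\subseteq M$ for some $M\in\II_G$ directly, avoiding any appeal to connectedness.
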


\begin{proposition}\label{prop:homo_unif_bounded_copres}
Let $f\colon (G,\E_{\II_G})\to(H,\E_{\II_H})$ be a homomorphism between coarse groups. Then the following properties are equivalent:
\begin{compactenum}[(i)]
 \item $f$ is uniformly bounded copreserving;
 \item $f$ is weakly uniformly bounded copreserving;
 \item for every $K\in\II_H$, there exists $L\in\II_G$ such that $K\cap f(G)\subseteq f(L)$.
\end{compactenum}
\end{proposition}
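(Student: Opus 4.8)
The plan is to close the cycle (i)~$\Rightarrow$~(ii)~$\Rightarrow$~(iii)~$\Rightarrow$~(i). The first implication is free: it is exactly Proposition \ref{prop:eff_cop_wcop}(ii), which holds for arbitrary maps between coarse spaces. For the other two I would reformulate everything through the canonical base $\{E_K^\lambda\mid K\in\II\}$ of a group coarse structure, using the identity $E_K^\lambda[x]=xK$ together with the two features coming from $f$ being a homomorphism: $e_H=f(e_G)\in f(G)$, and $f(a^{-1}b)=f(a)^{-1}f(b)$. Concretely, reducing to base entourages on both sides (and noting that if $F\subseteq E_L^\lambda$ then $f(F[x])\subseteq f(xL)$, and that $E\mapsto E[f(x)]\cap f(G)$ is monotone), being uniformly bounded copreserving in the sense of Definition \ref{def:Mor}(iv) is equivalent to: for every $K\in\II_H$ there is $L\in\II_G$ with $f(x)K\cap f(G)\subseteq f(xL)$ for every $x\in G$.

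\emph{(ii) $\Rightarrow$ (iii).} Fix $K\in\II_H$ and apply weak uniform bounded copreservation to the base entourage $E_K^\lambda\in\E_{\II_H}$, obtaining $F\in\E_{\II_G}$ with $E_K^\lambda\cap(f(G)\times f(G))\subseteq(f\times f)(F)$; enlarging $F$ to a base entourage we may assume $F=E_L^\lambda$ for some $L\in\II_G$. Given $k\in K\cap f(G)$, we have $(e_H,k)\in E_K^\lambda$ (since $k\in K=e_HK$) and $(e_H,k)\in f(G)\times f(G)$ (since $e_H=f(e_G)$), hence $(e_H,k)=(f(a),f(b))$ for some $(a,b)\in E_L^\lambda$, i.e. $b\in aL$. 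Setting $\ell:=a^{-1}b\in L$ and using $f(a)=e_H$ gives $k=f(b)=f(a)f(\ell)=f(\ell)\in f(L)$. Thus $K\cap f(G)\subseteq f(L)$.

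\emph{(iii) $\Rightarrow$ (i).} By the reformulation above it suffices, for each $K\in\II_H$, to produce $L\in\II_G$ with $f(x)K\cap f(G)\subseteq f(xL)$ for all $x\in G$. Take $L\in\II_G$ with $K\cap f(G)\subseteq f(L)$ as granted by (iii). If $y=f(x)k=f(x')$ with $k\in K$ and $x'\in G$, then $k=f(x)^{-1}f(x')=f(x^{-1}x')\in K\cap f(G)\subseteq f(L)$, say $k=f(\ell)$ with $\ell\in L$, so $y=f(x)f(\ell)=f(x\ell)\in f(xL)$, as required.

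I do not expect a genuine obstacle here: the whole argument is routine bookkeeping once one moves to the base $\{E_K^\lambda\}$. The one thing that has to be used at each step is that $f$ respects the identity, products and inverses — in particular that $e_H\in f(G)$ — and this is precisely what makes the three properties equivalent for homomorphisms between coarse groups, whereas for general bornologous maps ``weakly uniformly bounded copreserving'' is strictly weaker than ``uniformly bounded copreserving'' (the implications in Proposition \ref{prop:eff_cop_wcop} are not reversible in general). A minor point to be spelled out is the legitimacy of reducing to base entourages in (iii)~$\Rightarrow$~(i), which rests on the monotonicity noted in the first paragraph.
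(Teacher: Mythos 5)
Your proof is correct and follows essentially the same route as the paper's: (i)$\Rightarrow$(ii) by citing Proposition \ref{prop:eff_cop_wcop}, then (ii)$\Rightarrow$(iii) by evaluating the inclusion $E_K^\lambda\cap(f(G)\times f(G))\subseteq(f\times f)(E_L^\lambda)$ at the identity, and (iii)$\Rightarrow$(i) by writing any element of $f(x)K\cap f(G)$ as $f(x)f(\ell)$ with $\ell\in L$. The only cosmetic difference is that you make explicit the reduction to base entourages and the monotonicity justifying it, which the paper leaves implicit.
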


\begin{proof} The implication (i)$\to$(ii) follows from Proposition \ref{prop:eff_cop_wcop}.

(ii)$\to$(iii) Let $K\in\II_H$ and fix an element $L\in\II_G$ such that $E_K\cap(f(G)\times f(G))\subseteq (f\times f)(E_L)$. We claim that $K\cap f(G)\subseteq f(L)$. Let $k\in K\cap f(G)=(E_K\cap(f(G)\times f(G)))[e]$. Then $k\in ((f\times f)(E_L))[e]$, which means that there exists $(z,w)\in E_L$ such that $f(z)=e$ and $f(w)=k$. Since $w\in zL$ and $f(z^{-1}w)=k$, we have that $z^{-1}w\in L$, and so
$$
(e,k)=(f(e),f(z^{-1}w))\in (f\times f)(E_L)\quad\mbox{and}\quad k\in f(E_L[e])=f(L).
$$

(iii)$\to$(i) Let $E_K\in\E_{\II_H}$ be an entourage, where $K\in\II_H$, and $L\in\II_G$ that satisfies the hypothesis. We claim that, for every $g\in G$, $E_K[f(g)]\cap f(G)\subseteq f(E_L[g])$. Fix an element $g\in G$ and $k\in K$. Assume that $f(g)k\in E_K[f(g)]\cap f(G)$, which means that there exists $h\in G$ such that $f(g)k=f(h)$. Then $k\in f(G)$ and so there exists $l\in L$ such that $f(l)=k$. Finally, $f(g)k=f(g)f(l)=f(gl)\in f(E_L[g])$.
\end{proof}

The following corollary trivially follows from Propositions \ref{prop:homomorphism_group_ideal} and \ref{prop:homo_unif_bounded_copres} and Corollary \ref{coro:ls_bijective_ce}.

\begin{corollario}\label{coro:homo_ce}
Let $f\colon (G,\E_{\II_G})\to(H,\E_{\II_H})$ be a homomorphism between coarse groups. Then $f$ is a coarse equivalence if and only if the following conditions hold:
\begin{compactenum}[(i)]
	\item $\ker f\in\II_G$;
	\item $f$ is large-scale surjective;
	\item for every $K\in\II_G$, $f(K)\in\II_H$;
	\item for every $K\in\II_H$, there exists $L\in\II_G$ such that $K\cap f(G)\subseteq f(L)$.
\end{compactenum}
\end{corollario}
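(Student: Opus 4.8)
The plan is to prove Corollary \ref{coro:homo_ce} by directly invoking the three results it cites and translating each abstract coarse condition into its group-ideal counterpart. By Corollary \ref{coro:ls_bijective_ce}, a map $f$ between coarse spaces is a coarse equivalence if and only if (a) $f$ is large-scale injective and large-scale surjective, (b) $f$ is bornologous, and (c) $f$ is uniformly bounded copreserving. Since $f$ here is a homomorphism between coarse groups $(G,\E_{\II_G})$ and $(H,\E_{\II_H})$, I would rewrite each of these three conditions using the characterisations already established.

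First I would handle large-scale injectivity. Recall from the text that a map is large-scale injective precisely when it has uniformly bounded fibers, and by Proposition \ref{prop:homomorphism_group_ideal}(i) a homomorphism $f$ has uniformly bounded fibers if and only if $\ker f\in\II_G$. This gives condition (i). Large-scale surjectivity is exactly the condition that $f(G)$ is large in $H$, which is condition (ii), so no translation is needed there. Next, bornologousness of the homomorphism $f$ is equivalent, by Proposition \ref{prop:homomorphism_group_ideal}(ii), to $f(K)\in\II_H$ for every $K\in\II_G$, yielding condition (iii). Finally, uniform bounded copreservation of the homomorphism $f$ is, by Proposition \ref{prop:homo_unif_bounded_copres} (equivalence of (i) and (iii) there), the same as: for every $K\in\II_H$ there exists $L\in\II_G$ with $K\cap f(G)\subseteq f(L)$, which is condition (iv).

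Assembling these, $f$ is a coarse equivalence if and only if conditions (i)--(iv) all hold. The proof is therefore a matter of citing Corollary \ref{coro:ls_bijective_ce} to reduce to the three bullet points, then invoking the three results above (plus the remark identifying large-scale injectivity with uniformly bounded fibers) to restate each bullet in group-ideal language. I anticipate no real obstacle: every ingredient has already been proved in the excerpt, and the argument is a bookkeeping exercise matching one list of equivalent conditions against another. The only point requiring a word of care is that condition (i) of Corollary \ref{coro:ls_bijective_ce} bundles large-scale injectivity and surjectivity together, so I would split it explicitly and note that large-scale surjectivity is literally the statement ``$f$ is large-scale surjective'' appearing as condition (ii), while large-scale injectivity unpacks via the fiber characterisation to $\ker f\in\II_G$. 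A one- or two-sentence proof suffices, which is presumably why the authors call it trivial.
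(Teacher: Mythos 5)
Your proposal is correct and matches the paper's intent exactly: the authors state that the corollary ``trivially follows from Propositions \ref{prop:homomorphism_group_ideal} and \ref{prop:homo_unif_bounded_copres} and Corollary \ref{coro:ls_bijective_ce},'' and your bookkeeping — translating large-scale injectivity via the kernel, bornologousness via images of ideal members, and uniform bounded copreservation via condition (iii) of Proposition \ref{prop:homo_unif_bounded_copres} — is precisely that deduction spelled out.
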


Group ideals are useful also to describe some categorical constructions, in particular products and quotients of coarse groups.

Let $\{G_i\}_{i\in I}$ be a family of groups, and $\E_i$ be a coarse structure on $G_i$, for every $i\in I$. For the sake of simplicity, we will denote by $\Pi_iE_i$ the entourage $\bigcap_i(p_i\times p_i)^{-1}(E_i)$, where $E_i\in\E_i$, for every $i\in I$. Note that, for every $(x_i)_i\in\Pi_iG_i$, $(\Pi_iE_i)[(x_i)_i]=\Pi_i(E_i[x_i])$, and thus, in particular, if, for every $i\in I$, $\E_i=\E_{\II_i}$ for some group ideal $\II_i$ and $E_i=E_{K_i}$, where $K_i\in\II_i$, 
\begin{equation}
\label{eq:prod_e}
(\Pi_iE_{K_i})[e]=\Pi_i(E_{K_i}[e])=\Pi_iK_i.
\end{equation}

\begin{proposizione}\label{prop:compatibilityproduct}
Let $\{(G_i,\E_{\II_i})$ be a family of coarse groups. Then the product coarse structure $\E$ on the direct product $\Pi_iG_i$ is a group coarse structure and it is generated by the base $\II=\{\Pi_iK_i\mid K_i\in\II_i,\,\forall i\in I\}$.
\end{proposizione}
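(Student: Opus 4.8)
The plan is to identify $\E$ as the left group coarse structure $\E_\JJ$ attached to a concrete group ideal $\JJ$ on $G:=\Pi_iG_i$, so that the statement reduces to a direct application of Definition~\ref{def:ideal}. The natural candidate is the downward closure $\cl(\II)=\{A\subseteq G\mid A\subseteq\Pi_iK_i\text{ for some }K_i\in\II_i,\ i\in I\}$ of the family $\II$ in the statement, since $\II$ itself is not closed under subsets and can only serve as a \emph{base} of a group ideal. Two things then have to be checked: (a) $\cl(\II)$ is a group ideal, and (b) $\E$ and $\E_{\cl(\II)}$ share a common base, namely $\{\Pi_iE_{K_i}\mid K_i\in\II_i\}$.

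For (b) I would first note that, since $\{E_{K_i}\mid K_i\in\II_i\}$ is a base of $\E_{\II_i}$, the entourages $\Pi_iE_{K_i}=\bigcap_i(p_i\times p_i)^{-1}(E_{K_i})$ with $K_i\in\II_i$ for all $i$ form a base of the product coarse structure $\E$ (any entourage of $\E$ lies below some $\Pi_iF_i$ with $F_i\in\E_{\II_i}$, and each $F_i$ lies below some $E_{K_i}$). The crucial identity is then $\Pi_iE_{K_i}=E_{\Pi_iK_i}$, the latter computed in $G$: using \eqref{eq:prod_e}, for $x=(x_i)_i$ and $y=(y_i)_i$ one has $(x,y)\in\Pi_iE_{K_i}$ iff $y_i\in x_iK_i$ for every $i$, iff $y\in x\cdot\Pi_iK_i$, iff $(x,y)\in E_{\Pi_iK_i}$. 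Hence $\{E_K\mid K\in\II\}$ is a base of $\E$, and since every member of $\cl(\II)$ lies below some $\Pi_iK_i$, the family $\{E_K\mid K\in\cl(\II)\}$ generates the same coarse structure; so $\E=\E_{\cl(\II)}$ once (a) is in place.

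For (a) I would verify the three axioms of Definition~\ref{def:group_ideal} componentwise: $\{e\}=\Pi_i\{e_i\}\in\cl(\II)$; $\cl(\II)$ is an ideal since it is a downward closure and $\Pi_iK_i\cup\Pi_iK_i'\subseteq\Pi_i(K_i\cup K_i')$ with $K_i\cup K_i'\in\II_i$; and it is stable under products and inverses because these commute with the set-theoretic product, $(\Pi_iK_i)(\Pi_iJ_i)=\Pi_i(K_iJ_i)$ and $(\Pi_iK_i)^{-1}=\Pi_iK_i^{-1}$, while $K_iJ_i,\,K_i^{-1}\in\II_i$. Combining (a) and (b) yields that $\E=\E_{\cl(\II)}$ is a left group coarse structure with base $\{E_{\Pi_iK_i}\mid K_i\in\II_i\}$, i.e. generated by $\II$.

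I do not expect a genuine obstacle here: every step is an elementary componentwise computation. The only point that needs care is the bookkeeping that keeps the \emph{base} $\II$ distinct from the group ideal $\cl(\II)$ it generates, which is also why the statement phrases the conclusion as ``generated by the base $\II$''. As an alternative route avoiding $\cl(\II)$, once one knows $\E$ admits the $G$-invariant base $\{E_{\Pi_iK_i}\}$ one can quote Proposition~\ref{prop:compatible*} to conclude that $\E$ is a left group coarse structure, and then Proposition~\ref{prop:compatible} identifies the governing group ideal as $\{E[e]\mid E\in\E\}=\cl(\II)$.
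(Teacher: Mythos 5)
Your proof is correct and in substance coincides with the paper's: the paper likewise rests on the componentwise identity $(\Pi_iE_{K_i})[e]=\Pi_iK_i$, first invoking Proposition \ref{prop:compatible*} via $(\Pi_iG_i)(\Pi_iE_{K_i})=\Pi_i(G_iE_{K_i})$ and then reading off the governing ideal from Proposition \ref{prop:compatible}(ii) — exactly the ``alternative route'' you sketch in your final paragraph. Your primary route, which builds $\cl(\II)$ first, checks the group-ideal axioms componentwise, and matches bases via $\Pi_iE_{K_i}=E_{\Pi_iK_i}$, is only a mild reorganization of the same computations and is equally valid.
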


\begin{proof} We want to use Proposition \ref{prop:compatible*}. Fix an element $E\in\E$, and, without loss of generality, suppose that $E=\Pi_iE_{K_i}$, where $K_i\in\II_i$, for every $i\in I$. It is easy to check that $(\Pi_iG_i)(\Pi_iE_{K_i})=\Pi_i(G_iE_{K_i})$. Since, for every $i\in I$, $\E_{\II_i}$ satisfies Proposition \ref{prop:compatible*}, $G_iE_{K_i}\in\E_{\II_i}$, and thus $(\Pi_iG_i)(\Pi_iE_{K_i})\in\E$. The fact that $\E=\E_{\II}$ follows from \eqref{eq:prod_e} and Proposition \ref{prop:compatible}(ii).
\end{proof}

Let us state a trivial, but useful, property.

\begin{fact}
If $f\colon G\to H$ is a homomorphism and $\II$ is a group ideal on $G$, then $f(\II)=\{f(K)\mid K\in\II\}$ is a group ideal on $H$. 
\end{fact}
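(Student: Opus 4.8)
The plan is a direct verification of the three axioms of Definition \ref{def:group_ideal} for the family $f(\II)$ on $H$, using that $f$ is a group homomorphism and that $\II$ satisfies those axioms on $G$. First, since $\{e_G\}\in\II$ and $f(\{e_G\})=\{f(e_G)\}=\{e_H\}$, the family $f(\II)$ contains the singleton of the identity.

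Next I would check that $f(\II)$ is an ideal. Closure under finite unions is immediate from the set-theoretic identity $f(K_1)\cup f(K_2)=f(K_1\cup K_2)$ together with $K_1\cup K_2\in\II$, so it suffices to treat two sets. For closure under subsets, suppose $B\subseteq f(K)$ for some $K\in\II$; put $K':=K\cap f^{-1}(B)$. Then $K'\subseteq K$, hence $K'\in\II$ since $\II$ is closed under subsets, and $f(K')=B$: the inclusion $f(K')\subseteq B$ is clear, while for $b\in B\subseteq f(K)$ one picks $x\in K$ with $f(x)=b$, and then $x\in f^{-1}(B)$, so $x\in K'$ and $b\in f(K')$. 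Thus $B=f(K')\in f(\II)$. This small preimage-intersection trick is the only point that is not a one-line identity, and it is the mild obstacle, if any.

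Finally, the remaining two axioms follow from $f$ being a homomorphism: for $K,J\in\II$ one has $f(K)f(J)=\{f(k)f(j)\mid k\in K,\,j\in J\}=\{f(kj)\mid k\in K,\,j\in J\}=f(KJ)$, and $KJ\in\II$, so $f(K)f(J)\in f(\II)$; likewise $f(K)^{-1}=\{f(k)^{-1}\mid k\in K\}=\{f(k^{-1})\mid k\in K\}=f(K^{-1})$, and $K^{-1}\in\II$, so $f(K)^{-1}\in f(\II)$. Hence $f(\II)$ satisfies all the conditions of Definition \ref{def:group_ideal} and is a group ideal on $H$. $\qed$
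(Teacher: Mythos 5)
Your proof is correct and complete; the paper states this Fact without proof (explicitly labelling it ``trivial''), and your direct verification of the axioms of Definition \ref{def:group_ideal} is exactly the intended routine argument. The one point that genuinely needs a word --- that $f(\II)$ is closed under subsets, which you handle with $K':=K\cap f^{-1}(B)$ --- is dealt with correctly.
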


\begin{proposition}\label{prop:quotient_ce}
Let $q\colon G\to G/N$ be a quotient homomorphism, and $\II$ be a group ideal on $G$. Then the map $q\colon(G,\E_{\II})\to(G/N,\E_{q(\II)})$ is bornologous and uniformly bounded copreserving. Moreover, the map $q$ is a coarse equivalence if and only if $N\in\II$.
\end{proposition}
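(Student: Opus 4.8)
The plan is to split the statement into its two assertions and attack them with the characterizations of bornologous, uniformly bounded copreserving, and coarse-equivalent homomorphisms established above.

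First I would prove that $q\colon(G,\E_\II)\to(G/N,\E_{q(\II)})$ is bornologous. By Proposition \ref{prop:homomorphism_group_ideal}(ii), it suffices to check that $q(I)\in q(\II)$ for every $I\in\II$, which is immediate from the very definition of $q(\II)=\{q(K)\mid K\in\II\}$; the preceding Fact guarantees $q(\II)$ is a group ideal on $G/N$, so the statement makes sense. For uniform bounded copreservation I would invoke Proposition \ref{prop:homo_unif_bounded_copres}: since $q$ is surjective, condition (iii) there reads ``for every $K\in\II_{G/N}$ there is $L\in\II$ with $K\cap q(G)=K\subseteq q(L)$''. Every element $\bar K$ of the base $q(\II)$ of $\E_{q(\II)}$ has the form $q(L)$ with $L\in\II$ by construction, and an arbitrary $K\in\II_{G/N}$ is contained in such a $q(L)$ because $\II_{G/N}=q(\II)$ is exactly the group ideal generated by those sets (indeed $\II_{G/N}$ is the downward closure of $\{q(L):L\in\II\}$ under subsets, finite unions being absorbed since $q(L_1)\cup q(L_2)=q(L_1\cup L_2)$). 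So (iii) holds and $q$ is uniformly bounded copreserving, hence also weakly so.

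For the ``moreover'' part I would apply Corollary \ref{coro:homo_ce}, which lists the four conditions equivalent to $q$ being a coarse equivalence. Conditions (iii) and (iv) there are precisely the bornologous and uniformly-bounded-copreserving properties just verified, so they always hold; condition (ii), large-scale surjectivity, holds because $q$ is onto and hence $q(G)=G/N$ is trivially large. Thus $q$ is a coarse equivalence if and only if condition (i) holds, namely $\ker q=N\in\II$. This gives the stated equivalence.

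The only place requiring a little care — and the ``main obstacle'', such as it is — is the bookkeeping in the second paragraph: one must be sure that $q(\II)$, as defined, really is the whole group ideal $\E_{q(\II)}$ is built from, i.e.\ that $\{q(L):L\in\II\}$ is already closed under subsets and finite unions up to the downward closure taken in Definition \ref{def:ideal}. This is routine: finite unions pull back through the surjection $q$, singletons $\{\bar e\}=q(\{e\})$ lie in it, and inverses $q(L)^{-1}=q(L^{-1})$ as well, so $q(\II)$ is a group ideal and equals its own completion's underlying ideal. With that observation in hand, both halves of the proposition follow formally from the cited propositions and corollary, with no genuine computation remaining.
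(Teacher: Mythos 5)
Your proof is correct and follows essentially the same route as the paper's: both halves reduce to the group-ideal characterisations of bornologous, uniformly bounded copreserving and coarsely equivalent homomorphisms (Propositions \ref{prop:homomorphism_group_ideal} and \ref{prop:homo_unif_bounded_copres}, Corollary \ref{coro:homo_ce}). The only cosmetic difference is that for the ``moreover'' part the paper verifies effective properness directly via the computation $q^{-1}(q(K))=K\ker q=KN\in\II$, whereas you observe that conditions (ii)--(iv) of Corollary \ref{coro:homo_ce} hold unconditionally and so the whole biconditional collapses to condition (i), namely $\ker q=N\in\II$; both arguments rest on the same underlying results.
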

\begin{proof} The first claim is trivial, thanks to Propositions \ref{prop:homomorphism_group_ideal}(i) and \ref{prop:homo_unif_bounded_copres}. If $q$ is a coarse equivalence, then Proposition \ref{prop:homomorphism_group_ideal}(ii) implies that $N=q^{-1}(e_{G/N})\in\II$. Let us focus on the opposite implication, which can be found also in \cite{nicasros2}. Suppose that $N\in\II$. Let $q(K)$, where $K\in\II$, be an arbitrary element of $q(\II)$. Then $q^{-1}(q(K))=K\ker q=KN\in\II$, which concludes the proof in virtue of Proposition \ref{prop:homomorphism_group_ideal}(ii) since $q$ is surjective.
\end{proof}

\begin{corollario}\label{coro:q_ce_compact}
Let $G$ be a topological group, and $K$ be a compact normal subgroup of $G$. Then the quotient map $q\colon G\to G/K$ is a coarse equivalence provided that both $G$ and $G/K$ are endowed with their compact-group coarse structures. 
\end{corollario}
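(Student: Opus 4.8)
The plan is to deduce Corollary~\ref{coro:q_ce_compact} directly from Proposition~\ref{prop:quotient_ce} by checking that its hypothesis is met, namely that the compact normal subgroup $K$ belongs to the group ideal defining the compact-group coarse structure of $G$. Recall from Example~\ref{ex:group_c_s}(v) that the compact-group coarse structure of a topological group is exactly $\E_{\rC(G)}$, where $\rC(G)$ is the group ideal of relatively compact subsets. Since $K$ is compact, it is in particular relatively compact, so $K\in\rC(G)$.

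The second point to verify is that the codomain $G/K$, when given its own compact-group coarse structure $\E_{\rC(G/K)}$, is the target that appears in Proposition~\ref{prop:quotient_ce}, i.e.\ that $\rC(G/K)=q(\rC(G))$. First, $q(\rC(G))\subseteq\rC(G/K)$ because continuous images of (relatively) compact sets are relatively compact, so $q\colon G\to G/K$ maps $\rC(G)$ into $\rC(G/K)$; conversely, given a relatively compact subset $C$ of $G/K$, its closure $\overline C$ is compact, and since $q$ is a quotient map onto $G/K$ with compact kernel $K$, the preimage $q^{-1}(\overline C)$ is compact (a continuous surjection from a space onto a compact Hausdorff space with compact fibres is proper; more elementarily, $q^{-1}(\overline C)$ is closed and, writing it as a continuous image under $K\times(\text{a compact transversal})$ via local compactness arguments, it is compact). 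Hence $C\subseteq\overline C=q(q^{-1}(\overline C))\in q(\rC(G))$, and using that $\rC(G)$ is closed under subsets we get $\rC(G/K)\subseteq q(\rC(G))$. Thus $\rC(G/K)=q(\rC(G))$, so the coarse space $(G/K,\E_{\rC(G/K)})$ coincides with $(G/K,\E_{q(\rC(G))})$.

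With these two observations, Proposition~\ref{prop:quotient_ce} applies verbatim: $q\colon(G,\E_{\rC(G)})\to(G/K,\E_{q(\rC(G))})$ is a coarse equivalence because $K\in\rC(G)$, and by the identification of the previous paragraph this is precisely the statement that $q\colon G\to G/K$ is a coarse equivalence for the compact-group coarse structures on both sides.

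I expect the only genuine subtlety to be the reverse inclusion $\rC(G/K)\subseteq q(\rC(G))$, i.e.\ showing that preimages of compact sets under $q$ are compact when $K$ is compact; everything else is an immediate unwinding of definitions plus an invocation of Proposition~\ref{prop:quotient_ce}. That compactness-of-preimage fact is a standard property of quotients by compact subgroups, so the proof is short once it is stated.
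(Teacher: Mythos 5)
Your proof is correct and follows essentially the same route as the paper, which simply observes that $q$ is a perfect map (hence $q(\rC(G))=\rC(G/K)$) and then invokes Proposition \ref{prop:quotient_ce} with $K\in\rC(G)$. One small caution on your justification of the key step: the claim that ``a continuous surjection onto a compact Hausdorff space with compact fibres is proper'' is false in general (closedness of the map is also needed), and no local compactness is assumed in the corollary; the clean argument is that $q$ is a closed map because $FK$ is closed for every closed $F\subseteq G$ when $K$ is compact, and a closed continuous map with compact fibres pulls compact sets back to compact sets.
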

\begin{proof}
Since $K$ is compact, the map $q$ is perfect and thus $q(\rC(G))=\rC(G/H)$. Hence, we can apply Proposition \ref{prop:quotient_ce} to conclude.
\end{proof}

As another corollary of Proposition \ref{prop:quotient_ce} we obtain: 

\begin{corollario}\label{quot}
If $G$ is a group with finite $G'$, then every group coarse structure $\E$ on $G$ is uniformly invariant. 
\end{corollario}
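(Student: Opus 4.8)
The claim is: if $G'$ is finite, then every group coarse structure $\E$ on $G$ is uniformly invariant. By Proposition \ref{prop:compatible}, such an $\E$ is $\E_\II^\lambda$ for the group ideal $\II=\{E[e]\mid E\in\E\}$, and by Proposition \ref{prop:coarseuniformoperation} uniform invariance of $\E_\II^\lambda$ is equivalent to $\II$ being uniformly bilateral, i.e.\ $K^G\in\II$ for every $K\in\II$. So the entire task reduces to showing: if $G'$ is finite, then every group ideal $\II$ on $G$ is uniformly bilateral.

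The plan is to fix $K\in\II$ and bound $K^G=\bigcup_{g\in G}g^{-1}Kg$ in terms of $K$, $K^{-1}$, and the finite set $G'$, using only the closure properties of a group ideal (finite unions, subsets, products, inverses) from Definition \ref{def:group_ideal}. The key identity is that for $k\in K$ and $g\in G$, the conjugate $g^{-1}kg = k\cdot(k^{-1}g^{-1}kg) = k\,[k,g]$, where $[k,g]=k^{-1}g^{-1}kg\in G'$. Hence $K^G\subseteq K\cdot G'$. Now $G'$ is a \emph{finite} subset of $G$, so $G'\in[G]^{<\omega}\subseteq\II$ (recall $[G]^{<\omega}$ is contained in every group ideal, since $\II$ contains $\{e\}$, is closed under the product of its members, under taking subsets, and hence contains every singleton $\{x\}=\{e\}x$... more carefully: $\II$ contains $\{e\}$ and is an ideal, but to get arbitrary finite sets one uses that $\bigcup\II$ is a subgroup and every finite subset of a single... ) — here I must be slightly careful: a general group ideal need not contain all finite subsets of $G$. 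However, $G'$ is not an arbitrary finite set: every commutator $[x,y]$ lies in $\bigcup\II$ only if... Instead, the clean route is to observe $K^G\subseteq KK^{-1}K$ is false in general; rather use that $G'\subseteq G'$ and note $K\cdot G' \subseteq K\cdot F$ where $F=G'\cup\{e\}$ is finite. So the real point needed is that $F\in\II$, equivalently that every finite subset of $G'$ lies in $\II$. This holds because $G'$ is generated by commutators $[a,b]$ with $a,b$ ranging over $G$, but we only need the elements of $G'$ themselves to be in $\bigcup\II$: pick $k\in K\setminus\{e\}$ if $K\ne\{e\}$ (if $K=\{e\}$ then $K^G=\{e\}\in\II$ trivially), then $\{k\}\{k^{-1}\}=\{e\}$ does not help either. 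The genuinely correct observation is: $K^G\subseteq K\cdot G'$ and $K\cdot G'=\bigcup_{c\in G'}Kc$ is a \emph{finite} union of sets $Kc$; each $Kc$ is a subset of $K\langle c\rangle$... still need $\langle c\rangle$ or $\{c\}$ in $\II$.

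Therefore the decisive step — and the one I expect to be the main obstacle — is showing $G'\subseteq\bigcup\II$, equivalently that each commutator $c=[a,b]$ belongs to some member of $\II$. This is where finiteness of $G'$ (not just $FC$-ness) is used together with a nontrivial input: by Schur's theorem the hypothesis ``$G'$ finite'' is what one gets from ``$Z(G)$ of finite index'', but more to the point, one can argue as follows. Since $G'$ is finite, it is in particular a finite \emph{normal} subgroup. Now for the given $K\in\II$ with $K\neq\{e\}$, we have $KK^{-1}\in\II$ and $e\in KK^{-1}$; I want to inflate $K$ to absorb $G'$. Write $L:=K\cdot(KK^{-1})^n$ for suitable $n$ — this does not manifestly contain $G'$. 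The honest fix, which I would carry out in the paper, is: the map $K\mapsto K\cdot G'$ does land in $\II$ because $G'$, being finite, satisfies $G'=G'\cup\{e\}\subseteq (KK^{-1})^m$ for some $m$ \emph{when $\bigcup\II\supseteq G'$}, so one reduces to the sub-case $\bigcup\II=H$ a subgroup with $G'\le H$; but if $G'\not\le\bigcup\II$, then some commutator lies outside the subgroup $\bigcup\II\ni$ every element of every $K\in\II$ — and then for any $k\in K$, $g\in G$, the conjugate $g^{-1}kg=k[k,g]$ need not lie in $\bigcup\II$, so $K^G$ could escape $\II$. Thus the statement as literally quoted requires $G'\le\bigcup\II$; since the corollary is asserted for \emph{every} group coarse structure, including $\E_\II$ with $\II$ small, the intended reading must invoke that finiteness of $G'$ forces, for every group ideal, $G'\subseteq\bigcup\II$ — which is true precisely because $\bigcup\II$ is a subgroup containing $\{e\}$ and... no. I would resolve this by checking directly with the ideal axioms: $K^G = \bigcup_{g}g^{-1}Kg$, and for fixed $g$, $g^{-1}Kg\subseteq (g^{-1}Kg)$; since $[k,g]\in G'$ and $G'$ finite, the set $\{g^{-1}kg : k\in K, g\in G\}\subseteq KG'$, and $KG'$ is a finite union $\bigcup_{c\in G'} Kc$; each $Kc$ has the same cardinality as $K$ and, crucially, $Kc = \{k c : k\in K\}$ where writing $kc = k\cdot c$ and noting $c = k^{-1}(kc)$ with $kc\in Kc$, we don't gain closure — so the only clean path is: $KG'\subseteq (K\cup G')\cdot(K\cup G')\cdots$ which needs $G'\in\II$. **I would therefore state and use the lemma that $G'$ finite implies $G'\subseteq\bigcup\II$ for every group ideal $\II$ on $G$**, proving it by: pick any $K\in\II$, $K\ne\{e\}$ (the case $K=\{e\}$ being trivial, as then $\E_\II$ is discrete and vacuously uniformly invariant), fix $1\ne k_0\in K$; then for every $g\in G$, $k_0^{-1}g^{-1}k_0 g=[k_0,g]\in G'$, and as $g$ ranges over $G$ this hits a generating set of the normal closure of... and since $G'$ is finite and $\langle k_0\rangle^{-1}\langle k_0\rangle^{G}$-type products stay in $\II$, one forces the finitely many commutators into $\II$. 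The cleanest honest argument: $K^{-1}K^G\ni k^{-1}g^{-1}kg=[k,g]$ for each $k\in K,g\in G$, so $G'\cap(\text{things of form }[k,g])\subseteq K^{-1}K^G$; but also $K^G\subseteq KG'$, giving $K^G\subseteq K\cdot G'\subseteq K\cdot(K^{-1}K\cup\{e\})^{|G'|}$ once we know $G'\subseteq (K^{-1}K)^{|G'|}$, which holds because $K^{-1}K$ generates a subgroup of $\bigcup\II$ containing $e$ and — granting $G'\le\bigcup\II$ — every element of the finite set $G'$ is a bounded-length word in elements of the single bounded set $K^{-1}K$. So modulo the inclusion $G'\le\bigcup\II$, the corollary follows from Proposition \ref{prop:coarseuniformoperation}(v)$\Leftrightarrow$(iv); the main obstacle is precisely justifying $G'\le\bigcup\II$, and I expect the paper handles it by the observation that if $K=\{e\}$ the structure is discrete and uniform invariance is trivial, while if $K\ne\{e\}$ then $\bigcup\II$ is a nontrivial normal-closure-containing subgroup, and finiteness of $G'$ pins it down.
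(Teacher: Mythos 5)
Your proposal never closes: after correctly reducing the statement (via Propositions \ref{prop:compatible} and \ref{prop:coarseuniformoperation}) to showing that every group ideal $\II$ on $G$ is uniformly bilateral, and after the correct key computation $g^{-1}kg=k\,[k,g]$, hence $K^G\subseteq KG'$, you spend the remainder trying to force $G'$ into $\II$ and never succeed. That is not an accident: the inclusion $G'\subseteq\bigcup\II$, let alone $G'\in\II$, does not follow from the group-ideal axioms, and the corollary as literally stated is false. Take $G=S_3$, so that $G'=A_3$ is finite, and let $\II=\II_H=\mathcal P(H)$ where $H$ is the (non-normal) subgroup generated by a transposition; by the final assertion of Proposition \ref{prop:coarseuniformoperation} a uniformly bilateral ideal has normal $\bigcup\II=H$, so $\E_{\II_H}^\lambda$ is a group coarse structure on a group with finite commutator subgroup which is \emph{not} uniformly invariant. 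So the gap you ran into is genuinely in the statement, not only in your attempt: an extra hypothesis such as $G'\in\II$ is needed (it is automatic, e.g., for connected group coarse structures, since then $[G]^{<\omega}\subseteq\II$ and $G'$ is finite).

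The paper's own proof is quite different in mechanism: it passes to the abelian quotient $G/G'$, where left and right structures coincide, and transports the equality $\E^\lambda=\E^\rho$ back along the two quotient maps $q\colon(G,\E_\II^\lambda)\to(G/G',\E_{q(\II)}^\lambda)$ and $q\colon(G,\E_\II^\rho)\to(G/G',\E_{q(\II)}^\rho)$, asserted to be coarse equivalences. But by Proposition \ref{prop:quotient_ce} those maps are coarse equivalences precisely when $G'\in\II$ --- i.e., the paper silently uses the very hypothesis you could not derive. Once $G'\in\II$ is granted, your direct route is the shorter and cleaner one: $K^G\subseteq KG'\in\II$ by the product axiom of Definition \ref{def:group_ideal}, so $\II$ is uniformly bilateral and Proposition \ref{prop:coarseuniformoperation} concludes. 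Write exactly that two-line argument, state $G'\in\II$ as a hypothesis, and delete the exploratory material.
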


\begin{proof} In order to see that $\E_\II^\lambda = \E_\II^\rho$
consider the quotient map $q\colon G\to G/G'$ and equip $ G/G'$ with the quotient group coarse structure $\E_{f(\II)}^\lambda=\E_{f(\II)}^\rho$ (they coincide since 
the group ideal $f(\II)$ is uniformly bilateral in the abelian group $G/G'$). Since 
$$
q\colon (G, \E_\II^\lambda)\to (G/G', \E_{f(\II)}^\lambda)\quad \mbox{and}\quad q\colon(G, \E_\II^\rho)\to (G/G', \E_{f(\II)}^\rho)
$$ are coarse equivalences and  $\E_{f(\II)}^\lambda =\E_{f(\II)}^\rho$, we deduce that the identity $(G, \E_\II^\lambda)\to (G, \E_\II^\rho)$
is a coarse equivalence (actually, an asymorphism). 
\end{proof}

It is easy to see that a family $\mathcal U$ of subsets of a coarse group $(G,\E_{\II})$  is uniformly bounded if and only if there exists $K\in\II$ such that $U\subseteq E_K[x]=xK$, for every $U\in\mathcal U$ and $x\in U$ (it is a characterisation for coarse groups of the general definition given in Section \ref{sec:cs_cg}). If $S\in\II$, we say that $\mathcal U$ is {\em $S$-disjoint} if, for every pair of distinct elements $U,V\in\mathcal U$, $U\cap E_S[V]=U\cap VS=\emptyset$.

\begin{definizione}\label{def:asdim}
A coarse group $(G,\E_{\II})$ has asymptotic dimension at most $n$ ($\asdim(G,\E_{\II})\leq n$), where $n\in\N$, if, for every $S\in\II$, there exists a uniformly bounded cover $\mathcal U=\mathcal U_0\cup\cdots\cup\mathcal U_n$ such that, for every $i=0,\dots,n$, $\mathcal U_i$ is $S$-disjoint. The asymptotic dimension of $(G,\E_{\II})$ is $n$ if $\asdim(G,\E_{\II})\leq n$ and $\asdim(G,\E_{\II})>n-1$. Finally, $\asdim(G,\E_{\II})=\infty$ if, for every $n\in\N$, $\asdim(G,\E_{\II})>n$.
\end{definizione}
 Definition \ref{def:asdim} is the specification for coarse groups of the general definition of asymptotic dimension, which can be given for every coarse space (see \cite{roe}). Asymptotic dimension is the large-scale counterpart of Lebesgue covering dimension (see \cite{Eng}).

If we take a coarse group $(G,\E_{\II})$, then, for every $e\in K=K^{-1}\in\II$, $E_K\circ E_K=E_{K\cdot K}$. Hence, a coarse group is cellular if and only if, for every symmetric element $K\in\II$ containing the identity, $E_K^\square=E_{\langle K\rangle}\in\E_\II$, which means that $\langle K\rangle\in\II$. We have then showed that a coarse group $(G,\E_\II)$ is cellular if and only if $\II$ has a cofinal family, with respect of inclusion, consisting of subgroups. A group coarse structure satisfying that property is called {\em linear}. This concept will be investigated in the next section. The equivalence between cellular coarse groups and linear coarse groups was already pointed out in \cite{PetProt2}.

For some coarse groups we have the following criterion for cellularity, which is also proved in \cite{nicasros2}, but with a stronger hypothesis.
\begin{proposition}\label{prop:asdim_0}
Let $(G,\E_\II)$ be a coarse group such that there exists an element $K\in\II$ that algebraically generates the whole group $G$. Then $\asdim(G,\E_\II)=0$ if and only if $G\in\II$.
\end{proposition}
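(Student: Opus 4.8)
The statement asserts that for a coarse group $(G,\E_\II)$ with an algebraically generating element $K\in\II$, one has $\asdim(G,\E_\II)=0$ iff $G\in\II$. The plan is to handle the two implications separately, using the characterisation of asymptotic dimension $0$ in terms of $S$-disjoint uniformly bounded covers (Definition~\ref{def:asdim}) together with the connection, recalled just before the statement, between $\asdim 0$, cellularity, and linearity of the group coarse structure.

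\emph{The easy direction} ($G\in\II\Rightarrow\asdim=0$): if $G\in\II$ then $\II=\mathcal P(G)$, so $\E_\II$ is the bounded coarse structure (Example~\ref{ex:group_c_s}(ii)). Given any $S\in\II$, take the single-element cover $\mathcal U=\{G\}=\mathcal U_0$: it is uniformly bounded since $G=E_G[x]$ for any $x$, and it is vacuously $S$-disjoint (no two distinct elements). Hence $\asdim(G,\E_\II)\le 0$, so it equals $0$.

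\emph{The main direction} ($\asdim=0\Rightarrow G\in\II$): first I would recall from the paragraph preceding the statement that $\asdim(G,\E_\II)=0$ forces $(G,\E_\II)$ to be cellular, hence linear, so $\II$ has a cofinal family of subgroups. Now use the generating element: since $K$ generates $G$ and $\II$ is a group ideal, we may assume $e\in K=K^{-1}$ (replace $K$ by $K\cup K^{-1}\cup\{e\}\in\II$). Fix $S:=K\in\II$ and apply $\asdim\le 0$ to get a uniformly bounded $S$-disjoint cover $\mathcal U=\mathcal U_0$ of $G$; let $T\in\II$ witness uniform boundedness, so every $U\in\mathcal U$ with $x\in U$ satisfies $U\subseteq xT$. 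The key point is that $S$-disjointness of a \emph{cover} by the generating set $S=K$ forces the cover to have a single member containing $e$: indeed, if $U$ is the member with $e\in U$, then $U\supseteq$ a whole ``$K$-connected'' piece — any element reachable from $e$ by multiplying generators in $K$ stays in $U$, because crossing into another member $V$ would put a point of $U$ within $E_K$-distance of $V$, contradicting $U\cap VK=\emptyset$. Since $K$ generates $G$, this forces $U=G$, whence $G\subseteq eT=T\in\II$, so $G\in\II$.

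\emph{Where the work is.} The delicate step is the last one: making precise the claim that an $S$-disjoint cover by a \emph{generating} symmetric set $S$ with $e\in S$ must be trivial. The argument is a connectivity/propagation argument: define, inside the member $U_0\ni e$, the set of all $g\in G$ expressible as $g=k_1\cdots k_m$ with each partial product $k_1\cdots k_j$ in $U_0$; show by induction on $m$, using that $U_0\cap VS=\emptyset$ for every other member $V$ (so that $k_1\cdots k_{j-1}\in U_0$ and $k_1\cdots k_j\in (k_1\cdots k_{j-1})S$ forces $k_1\cdots k_j\in U_0$ as well, since it cannot lie in any $V\ne U_0$ without violating disjointness — here one also uses that the cover exhausts $G$), that this set is all of $G$ because $S$ generates. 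Then $G=U_0$ and boundedness gives $G\in\II$. Alternatively, and perhaps more cleanly, I would phrase this via Proposition~\ref{prop:asdim_0}'s sibling facts: cellularity gives $\langle K\rangle=G\in\II$ directly, since for symmetric $K\ni e$ we have $E_K^\square=E_{\langle K\rangle}\in\E_\II$, i.e. $\langle K\rangle\in\II$, and $\langle K\rangle=G$ by hypothesis. This second route is shorter, so I would use it as the main line and relegate the connectivity argument to a remark; the only thing to double-check is that the ``$\asdim 0\Rightarrow$ cellular'' implication (equivalently, $\asdim 0 \Rightarrow$ linear) is available at this point of the paper, which it is from the discussion immediately preceding the proposition.
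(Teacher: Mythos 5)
Your proposal is correct; note that you actually sketch two routes, one of which coincides with the paper's and one of which is genuinely different. Your ``connectivity/propagation'' argument is essentially the paper's own proof, which packages it more efficiently: assuming $e\in K=K^{-1}$, the hypothesis $G=\langle K\rangle$ forces $U\subsetneq E_K[U]=UK$ for every proper subset $U$ of $G$, so no $K$-disjoint \emph{cover} can contain a proper subset (a point of $UK\setminus U$ would lie in some other member $V$, contradicting $V\cap UK=\emptyset$); hence the only $K$-disjoint cover is $\{G\}$, and this single observation settles both implications at once, since $\{G\}$ is uniformly bounded precisely when $G\in\II$. Your preferred main line is different: it goes through the equivalence ``$\asdim=0$ iff cellular'' (stated in \S\ref{sub:cs} with a citation to Protasov--Zarichnyi) together with the characterisation, established just before the proposition, that cellularity of $(G,\E_\II)$ amounts to $\langle K\rangle\in\II$ for every symmetric $K\ni e$ in $\II$. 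That route is shorter, but its nontrivial ingredient ($\asdim=0\Rightarrow$ cellular) is imported from the literature, whereas the paper's argument is self-contained, working directly from Definition~\ref{def:asdim}; in effect the paper reproves the relevant special case of that equivalence. Your easy direction ($G\in\II$ gives $\II=\mathcal P(G)$ and the trivial cover $\{G\}$) matches the paper's. Both routes are valid.
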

\begin{proof}
Without loss of generality, we can assume that $K=K^{-1}$. Since $G=\langle K\rangle$, for every $U\subsetneq G$, $U\subsetneq E_K[U]=UK$. Hence, the only possible $K$-disjoint cover $\mathcal U$ is $\mathcal U=\{G\}$. Finally, $\mathcal U$ is uniformly bounded if and only if $G\in\II$.
\end{proof}

\section{Linear coarse structures induced by cardinal invariants}\label{section_linear_coarse}

A topological abelian group $(G,\tau)$, and its topology $\tau$, are called \emph{linear} if $\tau$ has a local base at $0_G$  formed by open subgroups of $G$. In the non-abelian case some authors impose normality on the open subgroups forming the local base. Motivated by this folklore notion in the area of topological groups, we defined in the previous section the notion of a linear coarse structure. Explicitly, a group coarse structure $\E_\II$ on $G$ is linear if there exists a non-empty family $\BB$ of subgroups $H_i$ of $G$, such that $H_iH_j \in\BB$ and $\cl(\BB)=\II$ (note that $H_i\cup H_j\subseteq H_iH_j$).

Note that, if we want $\II$ to be connected, then we have to ask that $\II$ contains all finitely generated subgroups of $G$. 

As far as the group itself is not finitely generated (as a normal subgroup) linear coarse structures do not look trivial. For example, if $G$ is abelian, then for every uncountable cardinal $\kappa$ the $\kappa$-group coarse structure defined in Example \ref{ex:group_c_s}(iv) is linear. 
We use this example to introduce a more general construction, namely group coarse structures which come out from {\em cardinal} and {\em numerical invariants}.

\begin{definizione} 
	A {\em cardinal invariant}\index{cardinal invariant} $i(\cdot)$ for abelian groups is an assignment $G \mapsto i(G)$ of a cardinal number $i(G)$ to every abelian group $G$ in such a way that, if $G \cong H$, then $i(G) = i(H)$.  
	
Call a cardinal invariant $i$  
	\begin{compactenum}[$\bullet$]
	\item\emph{subadditive}, if $i(H_1 + H_2) \leq i(H_1)+ i(H_2)$ whenever $H_i$ ($i = 1,2$) are subgroups of some abelian group $G$; 
	\item \emph{additive}, if $i(G) = i(H) + i(G/H)$ whenever $H$ is a subgroup of $G$;
	\item\emph{monotone (with respect to quotients)}, whenever $i(G/H) \leq i(G)$ for any subgroup $H$ of $G$;
	\item\emph{monotone (with respect to subgroups)}, whenever $i(H) \leq i(G)$ for any subgroup $H$ of $G$;
	\item\emph{bounded}, whenever $i(G) \leq \lvert G\rvert$ for any group $G$;
	\item {\em continuous}, if $i$ is bounded and if $i(G) = \sup_{\lambda \in \Lambda}i(G_\lambda)$, when $G$ is a direct limit of 
its subgroups $(G_\lambda)_{\lambda \in \Lambda}$;
	\item\emph{normalised}, if $i(\{0\}) = 0$. 
	\end{compactenum}
\end{definizione} 

Obviously, additivity implies subadditivity
and monotonicity with respect to both quotients and subgroups. 

Sometimes it is convenient to consider numerical invariants instead of cardinal invariants. A {\em numerical invariant for abelian groups} is an assignment $G\mapsto j(G)\in\R_{\ge 0}\cup\{\infty\}$ such that $j(G)=j(H)$ provided that $G\cong H$. One can define boundedness, (sub)additivity, continuity, monotonicity, and normalisation also for numerical invariants in the same way. We say that $j$ is a \emph{length function}, if $j$ is continuous and additive.
Every cardinal invariant $i$ induces a numerical invariant $j_i$ by ``truncating from above" at $\omega$, i.e., by letting $j_i(G)=\min\{i(G),\infty\}$, for every abelian group $G$, where, for every $x\in\R_{\ge 0}$, $x<\infty$ and, for every infinite cardinal $\kappa$, we assume that $\infty\leq\kappa$.

\begin{esempio}
\begin{compactenum}[(i)]
 \item The {\em normalised cardinality}, defined by 
 $$
 \ell(G)=\begin{cases}\begin{aligned}&\lvert G\rvert, &\text{if $G$ is infinite,}\\
 & \log\lvert G\rvert, &\text{otherwise.}\end{aligned}\end{cases}
 $$ 
 This, maybe somewhat unusual, modification is due to the fact that the size $\lvert G\rvert$ is a cardinal invariant, but it fails to be normalised and subadditive (as far as finite groups are concerned).
 \item The free rank $r_0(G)$ and the $p$-ranks $r_p(G)$ of an abelian group $G$ are cardinal invariants. Hence also the rank $r(G)=\max\{r_0(G),\sup\{r_p(G)\mid p\in P\}\}$, where $P$ is the set of all prime numbers. In general, $r(G)\leq |G|$, they coincide when $r(G)$ is infinite. 
 \item Other invariants can be defined by using functorial subgroups. For example: 
  \begin{compactenum}[$\bullet$]
    \item (\cite{dgb}) the divisible weight:  $w_d(G) = \inf\{\lvert mG\rvert\mid m > 0\}$, 
    \item (\cite{dikshaA}) the divisible rank:  $r_d(G) = \inf\{r(mG)\mid m > 0\}$. 
 \end{compactenum}
 \item Using the idea from item (iii), for every cardinal invariant $i$ one can define its modification $i_d$ defined 
similarly to divisible 
 rank:  $i_d(G) = \inf\{i(mG)\mid m > 0\}$. It is bounded (normalised), whenever $i$ is, and it has particularly nice properties when $i$ is monotone with respect to taking subgroups and quotients. Then $i_d$ has the same properties and, moreover, $i_d$ is subadditive, whenever $i$ is.  
This shows that $r_d$ normalise, subadditive, bounded and monotone with respect to taking subgroups and quotients, while $w_d$ has all these properties beyond the first one. To obtain that one too one has to slightly modify its definition as follows 
$$
\widetilde w_d(G)  = \inf\{\ell(mG) \mid m > 0\}.
$$
It is easy to see that $\widetilde w_d(G) = w_d(G)$ is infinite for all unbounded groups, while $\widetilde w_d(G) = 0 < 1 =w_d(G)$
for all bounded groups. 
\end{compactenum}
All these cardinal invariants are subadditive and bounded, the normalised cardinality $\ell(\cdot)$,  the free rank $r_0$,  the divisible weight $w_d$ and the the divisible rank $r_k$ are also monotone with respect to quotients whereas $r$ and $r_p$ are not.
\end{esempio}

\subsection{The linear coarse structures associated to a cardinal invariant}\label{LinStr}

For a cardinal invariant $i$ we define now linear coarse structures depending on a fixed infinite cardinal $\kappa$.
To this end for any abelian group $G$ denote by $\BB_{i,\kappa}$ the family of all subgroups $H$ of $G$ with $i(H) < \kappa$. If $\kappa$ is infinite and $i$ bounded, $\BB_{i,\kappa}$ is non-empty. Here is a condition ensuring that $\BB_{i,\kappa}$ is a base of a group ideal.

\begin{claim}  Let $i$ be a normalised, subadditive cardinal invariant for abelian groups and let $\kappa$ be an infinite cardinal.  For every abelian group $G$ the family $\BB_{i,\kappa}$ is a base of a group ideal $\II_{i,\kappa}$ on $G$.
\end{claim}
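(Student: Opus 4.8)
The plan is to verify directly that $\BB_{i,\kappa}$ is a base of a group ideal, i.e., that its downward closure $\cl(\BB_{i,\kappa})$ satisfies the three conditions of Definition~\ref{def:group_ideal}. Since $\BB_{i,\kappa}$ consists of subgroups, membership of $\{e\}$ is immediate (the trivial subgroup $\{0\}$ lies in $\BB_{i,\kappa}$ because $i$ is normalised, so $i(\{0\}) = 0 < \kappa$), and closure under subsets is automatic once we pass to $\cl(\BB_{i,\kappa})$. So the work consists of checking that $\BB_{i,\kappa}$ is closed under the two ``algebraic'' operations up to refinement: given $H_1, H_2 \in \BB_{i,\kappa}$, we must find a member of $\BB_{i,\kappa}$ containing $H_1 H_2$ (equivalently, since these are subgroups of an abelian group, containing $H_1 + H_2$), and similarly a member containing $H_1^{-1} = H_1$.

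First I would handle the inverse condition: for a subgroup $H$ of an abelian group, $H^{-1} = H$, so there is nothing to prove — $H \in \BB_{i,\kappa}$ already. Next, for the product, note that in an abelian group $H_1 H_2 = H_1 + H_2$ is again a subgroup of $G$, and by subadditivity of $i$ we have $i(H_1 + H_2) \le i(H_1) + i(H_2) < \kappa + \kappa = \kappa$, where the last equality uses that $\kappa$ is an infinite cardinal (so $\kappa + \kappa = \kappa$). Hence $H_1 + H_2 \in \BB_{i,\kappa}$. This shows $\BB_{i,\kappa}$ is itself closed under the subgroup-sum operation and contains $\{0\}$; in particular, the remark just before the Claim (a linear coarse structure requires a family $\BB$ of subgroups with $H_i H_j \in \BB$ and $\cl(\BB) = \II$) applies verbatim with $\BB = \BB_{i,\kappa}$.

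It then remains to observe that $\II_{i,\kappa} := \cl(\BB_{i,\kappa})$ genuinely is an ideal in the sense of Definition~\ref{def:group_ideal}(i): it is closed under subsets by construction, and closed under finite unions because if $A \subseteq H_1$ and $B \subseteq H_2$ with $H_1, H_2 \in \BB_{i,\kappa}$, then $A \cup B \subseteq H_1 + H_2 \in \BB_{i,\kappa}$. Closure of $\cl(\BB_{i,\kappa})$ under the set-product $K J$ and under $K \mapsto K^{-1}$ follows the same way: if $K \subseteq H_1$ and $J \subseteq H_2$ then $KJ \subseteq H_1 + H_2 \in \BB_{i,\kappa}$ and $K^{-1} \subseteq H_1 = H_1^{-1} \in \BB_{i,\kappa}$. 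This completes the verification that $\BB_{i,\kappa}$ is a base of the group ideal $\II_{i,\kappa}$.

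There is no real obstacle here; the only point requiring the hypotheses is the product/union step, where both \emph{subadditivity} of $i$ (to control $i(H_1 + H_2)$) and \emph{infinitude} of $\kappa$ (for $\kappa + \kappa = \kappa$) are essential, while \emph{normalisation} is used solely to guarantee $\BB_{i,\kappa} \neq \emptyset$. I would state these dependencies explicitly in the proof so that the reader sees exactly where each assumption enters, since later variants (e.g.\ dropping normalisation, or using numerical invariants truncated at $\omega$) will hinge on precisely these points.
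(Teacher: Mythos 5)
Your proof is correct and follows essentially the same route as the paper's: the paper's (very terse) argument likewise observes that $H\cup K\subseteq H+K\in\BB_{i,\kappa}$ by subadditivity and that $-H=H$ for subgroups, with normalisation guaranteeing $\{0\}\in\BB_{i,\kappa}$. Your version merely makes explicit the cardinal arithmetic (sums of two cardinals below an infinite $\kappa$ stay below $\kappa$) and the routine verification that the downward closure is a group ideal, both of which the paper leaves implicit.
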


\begin{proof}  If $H,K\in\BB_{i,\kappa}$, then $H\cup K\subseteq H+K\in\BB_{i,\kappa}$ since $i$ is subadditive. Moreover, for every subgroup $H$ of $G$ we have $-H=H$ and thus $H\in\BB_{i,\kappa}$ if and only if $-H\in\BB_{i,\kappa}$. 
\end{proof}

The following result is trivial.

\begin{proposizione}\label{classif1}
	Let $G$ be an abelian groups, $i$ be a normalised, subadditive cardinal invariant and $\kappa$ be an infinite cardinal. Then the trivial homomorphism $(G,\E_{\II_{i,\kappa}})\to\{0\}$ is a coarse equivalence if and only if $i(G)<\kappa$.
\end{proposizione}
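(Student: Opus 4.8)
The plan is to reduce the statement to Proposition \ref{prop:quotient_ce} applied to the trivial quotient homomorphism $q\colon G\to G/G = \{0\}$, which is the quotient map with kernel $N=G$. By Proposition \ref{prop:quotient_ce}, $q$ is a coarse equivalence if and only if $N=G\in\II_{i,\kappa}$, so the entire claim amounts to showing that $G\in\II_{i,\kappa}$ if and only if $i(G)<\kappa$. Since $\{0\}$ carries only one group ideal (namely $\mathcal P(\{0\})=\{\{0\}\}$), the target coarse structure $\E_{q(\II_{i,\kappa})}$ is unambiguously the bounded one, and there is no subtlety on the codomain side.

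For the equivalence $G\in\II_{i,\kappa}\iff i(G)<\kappa$, recall that $\II_{i,\kappa}=\cl(\BB_{i,\kappa})$ where $\BB_{i,\kappa}$ is the family of \emph{subgroups} $H\le G$ with $i(H)<\kappa$. Thus $G\in\II_{i,\kappa}$ means there is some subgroup $H\le G$ with $G\subseteq H$ and $i(H)<\kappa$; but $G\subseteq H$ forces $H=G$, so this says precisely $i(G)<\kappa$. Conversely, if $i(G)<\kappa$ then $G\in\BB_{i,\kappa}\subseteq\II_{i,\kappa}$ directly. This is the ``trivial'' content the paper alludes to: the base of $\II_{i,\kappa}$ consists of subgroups, so membership of the top element $G$ cannot be achieved by passing to a larger overgroup.

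Assembling: first I would invoke the Claim to note $\II_{i,\kappa}$ is a well-defined group ideal (using normality/subadditivity of $i$), so that $(G,\E_{\II_{i,\kappa}})$ is a legitimate coarse group. Then I would observe that the trivial homomorphism $G\to\{0\}$ is exactly the quotient map $q\colon G\to G/G$, apply Proposition \ref{prop:quotient_ce} to get that $q$ is a coarse equivalence iff $G\in\II_{i,\kappa}$, and finally substitute the elementary characterisation $G\in\II_{i,\kappa}\iff i(G)<\kappa$ from the previous paragraph.

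There is essentially no obstacle here — the only point requiring a moment's care is that $\II_{i,\kappa}$ is generated by a base of \emph{subgroups}, so that $G$ belongs to the ideal only when $G$ itself lies in the base; one should not be tempted to think $G$ could be ``dominated'' by some strictly larger set. Everything else is a direct citation of Proposition \ref{prop:quotient_ce} with $N=G$, which is why the statement is labelled trivial.
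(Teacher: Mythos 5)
Your proof is correct. The paper itself offers no argument for this proposition (it is simply declared trivial), so there is nothing to compare against; your route through Proposition \ref{prop:quotient_ce} with $N=G$, together with the observation that $G\in\II_{i,\kappa}$ forces $G$ to lie in the base $\BB_{i,\kappa}$ itself (hence $i(G)<\kappa$), is a clean and complete way to fill in the details. Equivalently one could note directly that a coarse space is coarsely equivalent to a point if and only if it is bounded, and that $(G,\E_{\II_{i,\kappa}})$ is bounded exactly when $G\in\II_{i,\kappa}$; both reductions end at the same elementary equivalence you establish.
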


For a fixed subadditive cardinal invariant $i$ and for any abelian group $G$ denote by $\BB_{i}^0$ the family of all subgroups $H$ of $G$ such that $i(H)=0$. If the cardinal invariant $i$ is subadditive and normalised, then the family $\BB_{i}^0$ is non-empty and defines a group ideal $\II_{i}^0$ inducing a cellular coarse structure on abelian groups. This construction can be carried out also in presence of a numerical invariant, and, moreover, for every cardinal invariant $i$, $\BB_{i}^0=\BB_{j_i}^0$.

\begin{proposizione}\label{prop:length_function}
	Let $G$ be a group and $j$ be a  normalised length function. Then group ideal $\II_{j}^0$ is generated by one element $I\subseteq G$. Moreover, the quotient map $q\colon G\to G/I$ is a coarse equivalence, provided that both groups are endowed with their $\II_j^0$-group coarse structures.
\end{proposizione}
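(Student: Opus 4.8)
The plan is to show that the family $\BB_j^0$ of $j$-null subgroups of $G$ has a largest element $I$, so that $\II_j^0=\cl(\{I\})$, and then to read off the coarse equivalence from Proposition \ref{prop:quotient_ce}. First I would check that $\BB_j^0$ is directed by inclusion: if $H,K\in\BB_j^0$, then $H\cup K\subseteq H+K$, and since a length function is in particular subadditive, $j(H+K)\le j(H)+j(K)=0$, so $H+K\in\BB_j^0$. Hence $I:=\bigcup\BB_j^0=\bigcup\{H\le G\mid j(H)=0\}$ is a subgroup of $G$, being the direct limit of the directed system $(\BB_j^0,\subseteq)$, and continuity of $j$ then gives $j(I)=\sup_{H\in\BB_j^0}j(H)=0$. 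Thus $I\in\BB_j^0$ is its maximum, and since every member of $\II_j^0=\cl(\BB_j^0)$ lies inside some $H\in\BB_j^0$, hence inside $I$, we conclude $\II_j^0=\cl(\{I\})$, i.e.\ $\II_j^0$ is the group ideal generated by the single set $I$.

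For the second assertion, note that $I=\ker q$ and $I\in\II_j^0$ by what has just been shown, so Proposition \ref{prop:quotient_ce} yields that $q\colon(G,\E_{\II_j^0})\to(G/I,\E_{q(\II_j^0)})$ is a coarse equivalence. It then remains to check that $\E_{q(\II_j^0)}$ is the $\II_j^0$-group coarse structure of $G/I$. On one side, every $K\in\II_j^0$ satisfies $K\subseteq I=\ker q$, whence $q(K)\subseteq\{0\}$ and so $q(\II_j^0)=\cl(\{\{0\}\})$. On the other side, every subgroup of $G/I$ has the form $J/I$ with $I\le J\le G$, and if $j(J/I)=0$ then additivity of $j$ forces $j(J)=j(I)+j(J/I)=0$, so $J\in\BB_j^0$ and therefore $J\subseteq I$; thus $J=I$ and $J/I$ is trivial. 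Hence, computing $\BB_j^0$ inside $G/I$ yields only $\{0\}$, so the $\II_j^0$-group coarse structure of $G/I$ is $\E_{\cl(\{\{0\}\})}=\E_{q(\II_j^0)}$, which is the required identification and finishes the proof.

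I expect the only genuinely delicate point to be the equality $j(I)=\sup_{H\in\BB_j^0}j(H)$: this is exactly where directedness of $\BB_j^0$ is indispensable, since continuity of $j$ is available only along direct limits of subgroups, and an arbitrary union of subgroups need not even be a subgroup, let alone $j$-null. Everything after that---the passage to the quotient via Proposition \ref{prop:quotient_ce} and the bookkeeping identifying $q(\II_j^0)$ with the discrete coarse structure on $G/I$, which is the $\II_j^0$-group coarse structure there---is routine.
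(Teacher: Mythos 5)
Your proof is correct and follows essentially the same route as the paper's: take $I$ to be the (directed) sum of all $j$-null subgroups, use continuity of $j$ to conclude $j(I)=\sup\{j(H)\mid H\in\BB_j^0\}=0$, hence $\II_j^0=\cl(\{I\})$, and then invoke Proposition \ref{prop:quotient_ce}. Your additional verification that $q(\II_j^0)$ coincides with the $\II_j^0$-group coarse structure computed intrinsically on $G/I$ (using additivity of $j$ to show that the only $j$-null subgroup of $G/I$ is trivial) is a point the paper leaves implicit, and it is a worthwhile clarification rather than a different approach.
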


\begin{proof} The subgroup $I:=\sum\{H\mid H\in\BB_{j}^0\}$ satisfies $\II_{j}^0=\cl(\{I\})$. The claim is trivial since $j$ is continuous and then $j(I)=\sup\{j(H)\mid H\in\BB_{j}^0\}=0$, which prove that $I\in\BB_{j}^0$.

The second statement follows from Proposition \ref{prop:quotient_ce} since $j(I)=0$.
\end{proof}

Let $G$ be an abelian group. Then $\II_{r_0}^0$ is a group ideal on it, since $r_0(\{0\})=0$. Moreover, since the numerical invariant induced by $r_0$ is a length function, we can apply Proposition \ref{prop:length_function} to prove that it is generated by the torsion subgroup $\Tor(G)$ of $G$. Moreover, $q\colon G\to G/\Tor(G)$ is a coarse equivalence. Note that $G/\Tor(G)$ is torsion-free.

The next issue we intend to face is ``how much'' the above group coarse structures can ``distinguish'' the groups, i.e., is there a great variety of groups that are not coarse equivalent with respect to the linear coarse structures just defined? 

\begin{proposizione}\label{classif2} Let $G$ and $H$ be two abelian groups, $i$ a cardinal invariant and $\kappa$ be an infinite cardinal. If there exists an homomorphism which is a coarse equivalence between $(G,\II_{i,\kappa})$ and $(H,\II_{i,\kappa})$ then either $i(G)<\kappa$ and $i(H)<\kappa$, or  $i(G)=i(H)$.
\end{proposizione}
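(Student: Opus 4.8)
The plan is to reduce the statement, via the characterisation of homomorphisms that are coarse equivalences in Corollary \ref{coro:homo_ce}, to some elementary cardinal arithmetic governed by the subadditivity (and additivity/monotonicity) of $i$.

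First I would dispose of the case where one of $i(G),i(H)$ is smaller than $\kappa$. Say $i(G)<\kappa$; then by Proposition \ref{classif1} the trivial homomorphism $(G,\E_{\II_{i,\kappa}})\to\{0\}$ is a coarse equivalence. Composing a coarse inverse $g\colon H\to G$ of $f$ with this map yields a coarse equivalence $H\to\{0\}$, which, being the trivial homomorphism, forces $i(H)<\kappa$ again by Proposition \ref{classif1}. The case $i(H)<\kappa$ is symmetric, using $f$ itself in place of $g$. Hence one may assume $i(G)\ge\kappa$ and $i(H)\ge\kappa$, and it remains to prove $i(G)=i(H)$.

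Now apply Corollary \ref{coro:homo_ce} to $f$. From item (i), $\ker f\in\II_{i,\kappa}$, so $\ker f$ is contained in a subgroup $N\le G$ with $i(N)<\kappa$; from item (ii), $f$ is large-scale surjective, which in the coarse group $(H,\E_{\II_{i,\kappa}})$ unwinds to $H=f(G)\cdot K_0$ for some $K_0\in\II_{i,\kappa}$, hence $H=f(G)+K$ for a subgroup $K\le H$ with $i(K)<\kappa$. I would then compute in two steps. First, $f(G)\le H$ gives $i(f(G))\le i(H)$ by monotonicity with respect to subgroups, while $H=f(G)+K$ and subadditivity give $i(H)\le i(f(G))+i(K)$; since $i(K)<\kappa\le i(H)$, the value $i(f(G))$ cannot be $<\kappa$ (otherwise $i(H)<\kappa$), so $i(f(G))\ge\kappa>i(K)$ and $i(f(G))+i(K)=i(f(G))$, whence $i(H)=i(f(G))$. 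Second, $f(G)\cong G/\ker f$ by the first isomorphism theorem, so $i(f(G))=i(G/\ker f)$; since $i(\ker f)\le i(N)<\kappa\le i(G)$, additivity of $i$ gives $i(G)=i(\ker f)+i(G/\ker f)=i(G/\ker f)$. Combining, $i(G)=i(G/\ker f)=i(f(G))=i(H)$.

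The only non-routine point, and the place where the hypotheses on $i$ are genuinely needed, is the second step of the last computation: one must ensure that quotienting by the small subgroup $\ker f$ does not drop the invariant below $i(G)$. Subadditivity alone does not guarantee this, so this is exactly where additivity of $i$ (equivalently, for the invariants of interest, monotonicity combined with the ``co-subadditive'' inequality $i(G)\le i(\ker f)+i(G/\ker f)$) has to be invoked; everything else is bookkeeping, including the cardinal identity $\lambda+\mu=\lambda$ for $\mu<\kappa\le\lambda$ and the translation of large-scale surjectivity into the equality $H=f(G)+K$.
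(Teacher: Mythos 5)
The paper states Proposition \ref{classif2} with no proof at all (the preceding Proposition \ref{classif1} is likewise dismissed as trivial), so there is no argument of the authors to compare yours against; but the reduction you propose --- Proposition \ref{classif1} for the bounded case, then Corollary \ref{coro:homo_ce} to extract $\ker f\subseteq N$ with $i(N)<\kappa$ and $H=f(G)+K$ with $i(K)<\kappa$, followed by the two-step cardinal computation $i(H)=i(f(G))=i(G/\ker f)=i(G)$ --- is surely the intended one, and every step you write is correct. In particular your disposal of the case $i(G)<\kappa$ via composition with the trivial homomorphism is clean (the composite $H\to\{0\}$ is automatically the trivial homomorphism, so Proposition \ref{classif1} applies), and the cardinal arithmetic $\lambda+\mu=\lambda$ for $\mu<\kappa\leq\lambda$ is used correctly throughout.

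The substantive point is the one you flag yourself: the statement only assumes ``$i$ a cardinal invariant'' (implicitly normalised and subadditive, so that $\II_{i,\kappa}$ exists), whereas your argument needs monotonicity with respect to subgroups (for $i(f(G))\leq i(H)$ and $i(\ker f)\leq i(N)$) and additivity, or monotonicity with respect to quotients together with $i(G)\leq i(\ker f)+i(G/\ker f)$. You are right that these cannot be dispensed with: subadditivity alone is not enough, and the proposition as literally stated fails without them. For example, modify the normalised cardinality by setting $i(A)=\omega_2$ when $A\cong\bigoplus_{\omega_1}\Z$ and $i(A)=\ell(A)$ otherwise. This $i$ is still normalised and subadditive (if $H_1+H_2\cong\bigoplus_{\omega_1}\Z$, then one of $H_1,H_2$ is free of cardinality $\omega_1$, hence itself isomorphic to $\bigoplus_{\omega_1}\Z$, so the right-hand side is already $\omega_2$; in all other cases $i$ agrees with $\ell$ on the left and dominates it on the right), and for $\kappa=\omega_1$ it induces exactly the $\omega_1$-group coarse structure, since $\BB_{i,\omega_1}$ is the family of countable subgroups. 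Yet the inclusion $\bigoplus_{\omega_1}\Z\hookrightarrow\bigl(\bigoplus_{\omega_1}\Z\bigr)\oplus\Z/2\Z$ is a homomorphism and a coarse equivalence for this structure, while the two $i$-values are $\omega_2$ and $\omega_1$, both $\geq\kappa$ and distinct. So your proof is correct precisely for the class of invariants the paper actually uses later ($\ell$, $r_0$, and more generally additive invariants monotone on subgroups), and your identification of additivity as the genuinely non-routine ingredient is the right diagnosis rather than a defect of the argument.
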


Example \ref{mega:example}, with $i = r_0$ and $\kappa= \omega$, shows that the implication in the above proposition cannot be inverted. 

\subsection{Abelian groups with the functorial coarse structure $\E_{r_0,\kappa}$}\label{Sec:last}

In \cite{banhzar}, Banakh, Highes and Zarichnyi provided a complete characterisation of countable abelian groups endowed with their finitary-group coarse structures. Let us recall the following result, which was given with a slightly different, but equivalent statement.
\begin{theorem}\cite[Theorem 1]{banhzar}
For two countable abelian groups $G$ and $H$ endowed with their finitary-group coarse structure, the 
following three statements are equivalent: 
\begin{compactenum}[(i)]
\item $G$ and $H$ are coarsely equivalent;
\item $\asdim G =\asdim H$ and $ G$ and $H$ are both finitely generated or both infinitely generated;
\item $r_0(G)=r_0(H)$ and $G$ and $H$ are either both finitely generated or both infinitely generated.
\end{compactenum}
\end{theorem}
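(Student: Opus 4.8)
Following \cite{banhzar}, I would prove the cycle $(i)\Rightarrow(ii)$, $(ii)\Leftrightarrow(iii)$, $(iii)\Rightarrow(i)$. Two preliminary remarks organise everything. First, for a countable group the finitary-group coarse structure of Example \ref{ex:group_c_s}(iii) is the coarse structure associated with any proper left-invariant metric, so all the tools of metric coarse geometry are available. Second, for a subgroup $H\le G$ the subspace coarse structure induced by the finitary structure of $G$ is exactly the finitary structure of $H$: a base entourage $E_K^\lambda$ with $K\in[G]^{<\omega}$ restricts on $H\times H$ to $E_{K\cap H}^\lambda$. Hence every subgroup is a coarse subspace and every injective homomorphism is a coarse embedding; in particular $\asdim H\le\asdim G$ whenever $H\le G$.

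The equivalence $(ii)\Leftrightarrow(iii)$ reduces to the identity $\asdim G=r_0(G)$ for every countable abelian group $G$, read with the convention that the right-hand side is $\infty$ when $r_0(G)=\omega$ (recall $r_0(G)\le\omega$ here); the clause ``both finitely generated or both infinitely generated'' is literally the same in $(ii)$ and in $(iii)$, so the identity closes the equivalence. For the lower bound, $r_0(G)\ge m$ produces a copy of $\Z^m$ inside $G$, hence a coarse embedding, so $\asdim G\ge\asdim\Z^m=m$ (\cite{NowYu}); thus $\asdim G\ge r_0(G)$ when $r_0(G)<\omega$, and $\asdim G=\infty$ when $r_0(G)=\omega$. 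For the upper bound, needed only when $r_0(G)=n<\omega$, I would argue directly: write $G=\bigcup_{k}G_k$ as an increasing union of finitely generated subgroups, so $G_k\cong\Z^{r_k}\oplus F_k$ with $r_k\le n$ and $F_k$ finite, whence $\asdim G_k=r_k\le n$; given a finite $S\subseteq G$, choose $k$ with $S\subseteq G_k$, observe $E_S^\lambda[x]=xS\subseteq xG_k$ so every $S$-chain stays inside one coset of $G_k$, and patch coset by coset the $(n+1)$-colour $S$-bounded covers witnessing $\asdim G_k\le n$ (these have a common bound since the cosets are isometric copies of $G_k$). This yields such a cover of $G$, so $\asdim G\le n$.

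The implication $(i)\Rightarrow(ii)$ is pure invariance: asymptotic dimension is a coarse invariant (\cite{NowYu}), so $G\sim H$ forces $\asdim G=\asdim H$; and among countable groups finite generation is a coarse invariant, since a countable group is finitely generated if and only if its finitary structure is coarsely equivalent to that of some finitely generated group (equivalently, is coarsely geodesic), so $G\sim H$ with $G$ finitely generated forces $H$ finitely generated.

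The substance of the theorem is $(iii)\Rightarrow(i)$, the construction of coarse equivalences. If $G$ and $H$ are both finitely generated with $r_0(G)=r_0(H)=\rho$, then $G\cong\Z^\rho\oplus F$ with $F$ finite (hence bounded), so the projection $G\to\Z^\rho$ has finite kernel and is a coarse equivalence by Proposition \ref{prop:quotient_ce}, giving $G\sim\Z^\rho\sim H$. The delicate case is $G,H$ both infinitely generated with $r_0(G)=r_0(H)=\rho$. Here the plan is to fix a normal form --- a model group $M_\rho$, say $M_\rho=\Z^\rho\oplus\bigoplus_{\N}\Z/2\Z$ if $\rho<\omega$ and $M_\omega=\bigoplus_{\N}\Z$ --- and to show $G\sim M_\rho$ for every such $G$. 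One exhausts $G=\bigcup_k G_k$ and $M_\rho=\bigcup_k M_k$ by strictly increasing finitely generated subgroups with matching free ranks $r_0(G_k)=r_0(M_k)$, then builds a bornologous map $G\to M_\rho$ shell by shell, using that each $G_k$ (being finitely generated abelian of rank $r_0(G_k)$) is quasi-isometric to $\Z^{r_0(G_k)}$, hence up to bounded error to the corresponding piece of $M_\rho$; one splices these pieces so that the resulting map is large-scale injective, large-scale surjective and uniformly bounded copreserving, and concludes via Corollary \ref{coro:ls_bijective_ce}. What makes the splicing possible --- and the reason bornologous maps, not quasi-isometries, are the correct morphisms --- is that consecutive shells may have wildly different cardinalities and metric geometry, and only the weak (bornologous) control is needed to match them up. I expect this level-by-level construction, and especially the verification of uniform bounded copreservation across infinitely many stages, to be the main obstacle; the algebraic reductions to the normal form are routine.
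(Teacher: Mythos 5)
This statement is not proved in the paper at all: it is quoted (up to rephrasing) from \cite{banhzar}, so there is no internal proof to compare your attempt against; the only meaningful benchmark is the argument of Banakh, Higes and Zarichnyi themselves. Measured against that, your reductions are sound. The identification of the finitary structure with the coarse structure of a proper left-invariant metric, the observation that subgroups carry the subspace structure (so injective homomorphisms are coarse embeddings), the equivalence $(ii)\Leftrightarrow(iii)$ via $\asdim G=r_0(G)$ --- lower bound by coarsely embedding $\Z^m$, upper bound by the coset-wise union argument over an exhaustion by finitely generated subgroups, which is exactly the Dranishnikov--Smith computation --- the coarse invariance of $\asdim$ and of finite generation (monogenicity) for $(i)\Rightarrow(ii)$, and the finitely generated case of $(iii)\Rightarrow(i)$ via Proposition \ref{prop:quotient_ce} are all correct.

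The gap is that the infinitely generated case of $(iii)\Rightarrow(i)$ --- which is the entire content of the theorem, everything else being standard --- is left as a plan rather than a proof. Declaring a normal form $M_\rho$ and proposing to ``splice'' quasi-isometries of the shells $G_k$ so that the result is large-scale bijective and uniformly bounded copreserving does not engage with the actual difficulty: the torsion parts of $G_k$ and of the corresponding piece of $M_\rho$ can have completely unrelated cardinalities, so matching free ranks shell by shell gives no mechanism for producing a large-scale bijection, nor for keeping the copreservation entourage under control across infinitely many stages. In \cite{banhzar} this is handled by genuinely additional ingredients: the fact that all unbounded countable locally finite groups are coarsely equivalent to one another (which settles the rank-zero stratum and supplies the torsion factor of the normal form), together with a decomposition argument reducing an arbitrary infinitely generated $G$ of finite free rank $n$ to $\Z^n\times T$ with $T$ unbounded locally finite. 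Without these (or equivalent) lemmas your outline cannot be completed as stated; the hardest implication is asserted, not proved, as you yourself acknowledge.
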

In the previous result, the free-rank played an important role. So it is reasonable to focus on the linear coarse structures associated to that cardinal invariant. In the sequel we fix the functorial coarse structure $\E_{r_0,\omega}$. 

\begin{osservazione}\label{deleted}
Let $G$ be a abelian group and let $H$ be a subgroup of $G$. Then the inclusion $j\colon H \to G$ is an asymorphic embedding.  
\end{osservazione}

Since $r_0(\Tor(G))=0$, Proposition \ref{prop:quotient_ce} implies that every abelian group $G$ is coarsely equivalent, via the quotient homomorphism $q\colon G\to G/\Tor(G)$, to a torsion-free abelian group. That's why we focus on torsion-free abelian groups in the sequel. Due to Remark \ref{deleted},
the study of the homomorphisms that are coarse equivalences can be reduced to the study of large subgroups. The next proposition provides a 
necessary condition for that. 

\begin{proposizione}\label{large} If a subgroup $H$ of a torsion-free abelian group $G$ is large, there exists $k \in \N$ such that 
\begin{equation}\label{last:Eq}
r_0(G/H) \leq k  \ \mbox{ and } \ r_p(G/H)  \leq k  \    \mbox{ for every prime }p. 
\end{equation}
\end{proposizione}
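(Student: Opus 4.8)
The plan is to first translate the hypothesis that $H$ is large into a purely algebraic statement — namely that $G = H + K$ for a subgroup $K$ of finite free rank — and then to read off the bounds on $r_0(G/H)$ and on all the $r_p(G/H)$ from the fact that $G/H$ is a quotient of $K$. For Step~1 (unpacking largeness): recall from Definition~\ref{def:ideal} that $\E_{r_0,\omega}=\E_{\II_{r_0,\omega}}$ has the base $\{E_K\mid K\in\II_{r_0,\omega}\}$ with $E_K[x]=x+K$, and that $K\in\II_{r_0,\omega}$ iff $K$ is contained in a subgroup of $G$ of finite free rank. Largeness of $H$ gives an entourage $E$ with $E[H]=G$; picking $K\in\II_{r_0,\omega}$ with $E\subseteq E_K$ yields $G=E[H]\subseteq E_K[H]=H+K$, hence $G=H+K$. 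Replacing $K$ by $\langle K\rangle$ (still of finite free rank, being a subgroup of one), we may assume $K$ is a subgroup with $k:=r_0(K)<\omega$ and $G=H+K$, so that $G/H=(H+K)/H\cong K/(H\cap K)$ is a quotient of the torsion-free finite-rank group $K$.

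For Step~2 (the ranks): since $r_0$ is monotone with respect to quotients, $r_0(G/H)=r_0\big(K/(H\cap K)\big)\le r_0(K)=k$. For the $p$-ranks I would isolate the following lemma: \emph{if $B$ is a torsion-free abelian group with $r_0(B)=m<\omega$ and $C\le B$, then $r_p(B/C)\le m$ for every prime $p$}; applied to $B=K$, $C=H\cap K$ it gives $r_p(G/H)=r_p\big(K/(H\cap K)\big)\le k$ for every $p$, which together with the bound on $r_0$ establishes \eqref{last:Eq}. To prove the lemma, suppose $\dim_{\Z/p\Z}(B/C)[p]\ge m+1$ and choose $b_0,\dots,b_m\in B$ with $pb_i\in C$ whose images in $B/C$ are $\Z/p\Z$-independent; since $r_0(B)=m$ and $B$ is torsion-free there is a nontrivial integral relation $\sum_{i=0}^{m}a_ib_i=0$, and after dividing by $\gcd(a_0,\dots,a_m)$ we may assume $a_j\not\equiv 0\pmod p$ for some $j$. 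Writing $a_i=\bar a_i+pt_i$ with $0\le\bar a_i<p$, we get $\sum_i\bar a_ib_i=-\sum_i t_i(pb_i)\in C$, so $\sum_i\bar a_i(b_i+C)=0$ in $B/C$ with $\bar a_j\ne 0$, contradicting independence.

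The only genuine obstacle is obtaining a bound on $r_p(G/H)$ that is \emph{uniform in $p$}, i.e.\ the lemma of Step~2; everything else is a routine unwinding of the definition of ``large'' via Definition~\ref{def:ideal} and of the isomorphism $G/H\cong K/(H\cap K)$. Note that torsion-freeness of $G$ enters only through the standing assumption of this subsection: what the argument actually uses is merely that the witnessing subgroup $K$ has finite free rank, and that $K$ (hence $B$ in the lemma) is torsion-free so that the integral relation among the $b_i$ has coefficients that are not all zero.
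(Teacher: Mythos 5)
Your proof is correct and follows essentially the same route as the paper's: reduce via $G=H+K$ to the isomorphism $G/H\cong K/(H\cap K)$ with $r_0(K)=k<\omega$, then bound each $r_p$ by taking $k+1$ representatives of $\Z/p\Z$-independent elements of the $p$-socle, extracting a nontrivial integral relation (using torsion-freeness to divide out the gcd), and reducing the coefficients mod $p$ to contradict independence. The only, harmless, difference is that you run this argument directly inside $K$, whereas the paper first embeds the finite-rank subgroup (there called $S$) into its divisible hull $\Q^k$ and bounds $r_p$ of $\Q^k/(H\cap S)$, invoking monotonicity of $r_p$ under subgroups; your version is marginally more self-contained, and you also make explicit the unpacking of largeness into $G=H+K$, which the paper states without proof.
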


\begin{proof} Suppose that there exists a subgroup $S$  of $G$ with $H + S = G$ of finite free rank $k = r_0(S)$. Then $G/H \cong S/H\cap S$ is a quotient of a torsion-free group.  Therefore, $r_0(G/H) \leq k$ and all $p$-ranks $r_p(G/H) = r_p(S/H\cap S)$ are bounded by $k$. Indeed, while $r_0(G/H) \leq k$ obviously follows from the monotonicity of $r_0$, the latter inequality needs more care. 
As $k = r_0(S)$, we can assume without loss of generality that $S$ is a subgroup of $\Q^k$. Hence, $ S/H\cap S$ is a subgroup of $A:=\Q^k/H\cap S$. So it suffices to prove that 
\begin{equation}\label{last:Eq*}
r_p(A) \leq k. 
\end{equation}
By the definition of $r_p$, $r_p(A) = \dim_{\Z/p\Z} A[p]$, 
where $A[p] = \{a\in A\mid pa =0\}$. Let $S_1:= \{s\in \Q^k: ps \in H \cap S\}$. Then $A[p] \cong S_1/  H \cap S$. To prove that 
$\dim_{\Z/p\Z} S_1/  H \cap S\leq k$ pick a set $X$ with strictly more than $k$ elements of $S_1/  H \cap S $. To see that it is linearly dependent, 
consider a lifting $Y$ of $X$ in $S_1\leq \Q^k$ along the projection map $q\colon S_1 \to S_1/  H \cap S $. As $\lvert Y\rvert>k$, $Y$  satisfies a non-trivial relation $\sum_{y\in Y} k_yy = 0$ in $S_1$. If not all coefficients are divisible by $p$, the projection along $q$ immediately gives a linear dependence between the elements of $X$ in $S_1/  H \cap S$. If there exists some power $p^t$ dividing all $k_y$, then we can obtain a new linear combination $\sum_{y\in Y} \frac{k_y}{p^s}y = 0$, as $S_1$ is torsion-free. By choosing  the largest possible $t$, we obtain a  linear combination in which at least one coefficient is coprime with $p$, se we can argue as before. This proves (\ref{last:Eq*}). 
\end{proof}

In particular, if the inclusion $j\colon H \hookrightarrow G$ is a coarse equivalence, then (\ref{last:Eq}) holds for some $k\in \N$. 
We do not know whether this necessary condition implies that $j$ is a coarse equivalence in the case of arbitrary pairs $G$, $H$. 
Yet, we can say something in case the larger group $G$ is divisible. 

A group $G$ is {\em divisible} if, for every $y\in G$ and every $n\in\N\setminus\{0\}$, there exists $x\in G$ such that $x^n=y$. 
Every abelian group $G$ has a largest divisible subgroup $d(G)$.  Examples of divisible groups are $\Q$ and, for every prime $p$, the {\em Pr\" uffer $p$-group} $\mathbb{Z}_{p^{\infty}}$, i.e., the subgroup $\Z_{p^{\infty}}=\langle\{1/p^n\mid n\in\N\}\rangle\leq\T$. A group $G$ is called {\em reduced} if $d(G) =\{0\}$. Finite groups are reduced. 

Recall that a torsion-free group of the form 
\begin{equation}\label{last:Eq**}
G = \bigoplus_{i\in I}A_i, 
\end{equation}
where all $A_i$ are subgroups of $\Q$, is called {\em fully decomposable}. Free groups and divisible torsion-free groups are instances of  fully decomposable torsion-free groups. A fully decomposable group as in (\ref{last:Eq**}) is called {\em homogeneous}, if all groups $A_i$ are pairwise isomorphic. Note that  (\ref{last:Eq**}) is reduced precisely when all $A_i$ are proper subgroups of $\Q$.  

\begin{teorema}\label{ex:example} 
Let $D$ be a divisible group and (\ref{last:Eq**}) be a fully decomposable reduced subgroup of $D$. Suppose that one of the following conditions holds:
\begin{compactenum}[(i)]
\item $I$ is uncountable;
\item $G$ is homogeneous.
\end{compactenum}
Then the following properties are equivalent:
\begin{compactenum}[(a)]
\item the inclusion $j\colon G \hookrightarrow D$ is a coarse equivalence; 
\item there exists a homomorphism $f\colon G \to D$ that is a coarse equivalence;
\item $G$ and $D$ are bounded coarse spaces; 
\item $r_0(G)<\infty$ and $r_0(D)<\infty$. 
\end{compactenum}
\end{teorema}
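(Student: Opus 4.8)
The plan is to prove the cycle $(a)\Rightarrow(b)\Rightarrow(d)\Rightarrow(c)\Rightarrow(a)$, noting that the only substantial implication is $(b)\Rightarrow(d)$. Indeed, $(a)\Rightarrow(b)$ is trivial since $j$ is a homomorphism; $(d)\Leftrightarrow(c)$ holds because for a coarse group $(X,\E_{\II_{r_0,\omega}})$ one has ``$X$ bounded'' $\iff X\in\II_{r_0,\omega}\iff r_0(X)<\omega$ (equivalently, apply Proposition \ref{classif1} with $i=r_0$, $\kappa=\omega$); and $(c)\Rightarrow(a)$ holds because when $G$ and $D$ are bounded both coarse structures are the full power sets, so $j$ is bornologous and effectively proper for trivial reasons and $j(G)$ is large in $D$, whence $j$ is a coarse equivalence by Definition \ref{def:Mor}(ix,b).

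To prove $(b)\Rightarrow(d)$, let $f\colon G\to D$ be a homomorphism that is a coarse equivalence. By Proposition \ref{classif2} (with $i=r_0$, $\kappa=\omega$) either $(d)$ already holds or $r_0(G)=r_0(D)$; I shall rule out the remaining possibility $r_0(G)=r_0(D)\ge\omega$ by contradiction. First pass to a torsion-free divisible $D$: since $r_0(\Tor(D))=0$ we have $\Tor(D)\in\II_{r_0,\omega}$, so the quotient map $D\to D/\Tor(D)$ is a coarse equivalence by Proposition \ref{prop:quotient_ce}; composing it with $f$ and replacing $D$ by $D/\Tor(D)$ (which changes neither $r_0(D)$ nor $G$) we may assume $D$ is torsion-free and divisible. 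Put $G':=f(G)\le D$. By Corollary \ref{coro:homo_ce} we have $\ker f\in\II_{r_0,\omega}$, say $n:=r_0(\ker f)<\omega$, and $G'$ is large in $D$; moreover $G'\cong G/\ker f$ is torsion-free because $D$ is. Applying Proposition \ref{large} to the large subgroup $G'$ of the torsion-free group $D$ yields a $k\in\N$ with $r_p(D/G')\le k$ for every prime $p$.

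It then remains to bound $r_p(D/G')$ from below in terms of the decomposition $G=\bigoplus_{i\in I}A_i$ (the $A_i$ being, as usual, nonzero rank-one groups, so $|I|=r_0(G)$). Since $D$ is divisible and torsion-free, multiplication by $p$ is an isomorphism of $\{x\in D\mid px\in G'\}$ onto $G'$ carrying $G'$ onto $pG'$; hence $(D/G')[p]\cong G'/pG'$ and $r_p(D/G')=\dim_{\Z/p\Z}(G'/pG')$. From $G'=G/\ker f$ one gets a surjection of $\Z/p\Z$-vector spaces $G/pG\twoheadrightarrow G'/pG'$ whose kernel is the image of $\ker f$ in $G/pG$, of dimension at most $\dim_{\Z/p\Z}(\ker f/p\ker f)\le r_0(\ker f)=n$; and $G/pG\cong\bigoplus_{i\in I}A_i/pA_i$, with $A_i/pA_i\cong\Z/p\Z$ when $A_i$ is not $p$-divisible and $A_i/pA_i=0$ otherwise. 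Writing $\mu_p:=|\{i\in I\mid A_i\text{ is not }p\text{-divisible}\}|$ we obtain $\mu_p\le\dim_{\Z/p\Z}(G'/pG')+n=r_p(D/G')+n\le k+n<\omega$ for every prime $p$. Since $G$ is reduced, each $A_i$ is a proper subgroup of $\Q$, hence fails $q$-divisibility for at least one prime $q=q(i)$. In case (ii), homogeneity makes $q(i)$ equal to a single prime $q$, so $|I|=\mu_q\le k+n<\omega$, contradicting $|I|=r_0(G)\ge\omega$. In case (i), $I=\bigcup_{q\text{ prime}}\{i\in I\mid A_i\text{ not }q\text{-divisible}\}$ is a countable union of sets of size $\le k+n$, hence countable, contradicting the hypothesis that $I$ is uncountable. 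Either way we reach the desired contradiction, proving $(b)\Rightarrow(d)$.

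The main obstacle is precisely the inequality $\mu_p\le k+n$: it requires the preliminary reduction (via Proposition \ref{prop:quotient_ce}) to a torsion-free divisible $D$ so that the identification $(D/G')[p]\cong G'/pG'$ becomes available — this is where divisibility of $D$ is genuinely used — followed by the control of how much the finite-rank kernel $\ker f$ can lower the $\Z/p\Z$-dimension on passing from $G/pG$ to $G'/pG'$. Everything else — the three easy implications, the appeals to Propositions \ref{classif2} and \ref{large} and Corollary \ref{coro:homo_ce}, and the elementary facts that a rank-one torsion-free group is reduced iff it is a proper subgroup of $\Q$ and that there are countably many primes — is routine bookkeeping.
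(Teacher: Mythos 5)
Your proof is correct, and it follows the paper's overall strategy: the three easy implications are handled the same way, and $(b)\Rightarrow(d)$ rests, exactly as in the paper, on the uniform bound $r_p(D/f(G))\le k$ supplied by Proposition~\ref{large}, combined with the observation that each reduced rank-one summand $A_i$ of (\ref{last:Eq**}) fails $q$-divisibility at some prime $q$, so that a pigeonhole over the countably many primes contradicts that bound under either hypothesis (i) or (ii). Where you genuinely differ is in how the lower bound on some $r_q(D/f(G))$ is produced. The paper encloses $\ker f$ in a finite subsum $L=\bigoplus_{i\in J}A_i$, restricts the coarse equivalence to the complementary summand $G_1=\bigoplus_{i\in I\setminus J}A_i$, and exhibits $\bigoplus_{i\in I\setminus J}D_i/A_i$ (with $D_i$ the divisible hull of $A_i$ inside $D$) as a subgroup of $D/G_1$, each nontrivial torsion summand $D_i/A_i$ contributing to some $r_{p_i}$. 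You instead compute $r_p(D/f(G))=\dim_{\Z/p\Z}\bigl(f(G)/pf(G)\bigr)$ via the isomorphism $(D/f(G))[p]\cong f(G)/pf(G)$, and control the passage from $G/pG\cong\bigoplus_i A_i/pA_i$ to $f(G)/pf(G)$ by the estimate $\dim_{\Z/p\Z}(\ker f/p\ker f)\le r_0(\ker f)<\omega$, which replaces the paper's splitting-off of a finite direct summand containing the kernel. The two computations are equivalent in substance (the paper's $r_{p}(D_i/A_i)>0$ is precisely your condition that $A_i$ is not $p$-divisible), but your version buys one concrete improvement: the preliminary replacement of $D$ by $D/\Tor(D)$ via Proposition~\ref{prop:quotient_ce} makes the ambient group torsion-free, which is what the hypothesis of Proposition~\ref{large} actually requires, whereas the paper applies that proposition to the merely divisible $D$ without recording this reduction. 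Your opening appeal to Proposition~\ref{classif2} to isolate the case $r_0(G)=r_0(D)\ge\omega$ is also a harmless repackaging of the paper's remark that $r_0(D)<\infty$ forces $r_0(G)<\infty$ through the bound on $r_0(D/f(G))$.
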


\begin{proof} Under any of the two assumptions (i) or (ii), the implication (a)$\to$(b) is trivial, as well as the equivalence of (c) and (d), while (c) trivially implies (a) (actually, any homomorphism will do). It only remains to prove (b)$\to$(d)
when either (i) or (ii) holds.  
 The initial part of the argument coincides in both cases. 
  
Suppose that $f\colon G\to D$ is a homomorphism and a coarse equivalence. 
By Corollary \ref{coro:homo_ce}, 
$r_0(\ker f) < \omega$. Hence, $ K = \ker f$ is contained in a finite direct summand
$L:= \bigoplus_{i\in J}A_i$, with $J \subseteq I$, of $G$. Moreover, $f$ factorises through an injective homomorphism $f_0\colon G/K \to D$ and $G/K \cong L/K \oplus G_1$, where  $G_1:= \bigoplus_{i\in I\setminus J}A_i$. Since $r_0(L/K) \leq r_0(L)<\infty$, the projection $G/K \to G_1$
is a coarse equivalence. Therefore, the restriction $f_1\colon G_1 \to D$ is still an (injective) coarse equivalence, so $f(G) = f_1(G_1)$ must be a large subgroup of $G$. 
We may assume, from now on, that $G_1$ is simply a subgroup of $D$, identifying it with $f(G) = f_1(G_1)$. 
According to Proposition \ref{large} and (\ref{last:Eq}), there exists some $k\in \N$, such that 
\begin{equation}\label{last:Eq+}
r_0(D/G_1) \leq k  \ \mbox{ and } \ r_p(D/G_1)  \leq k  \    \mbox{ for every prime }p. 
\end{equation}
If $r_0(D) < \infty$, this implies $r_0(G_1) < \infty$
and consequently $r_0(G) < \infty$, hence we are done. Assume in the sequel that $r_0(D)$ is infinite. Hence, also $r_0(G_1)$ is infinite by 
(\ref{last:Eq+}). 

Since $D$ is divisible, the divisible hull $D_i=D(A_i) \cong \Q$ of each $A_i$, $i\in I\setminus J$, is contained in $D$ along with the direct sum $D':= D(G_1) =\bigoplus_{I\setminus J} D_i$. Hence, the quotient group $D/G_1$ contains a subgroup isomorphic to $D'/G_1 \cong \bigoplus_{I\setminus J} D_i/A_i$. Since $G$ is reduced, $A_i\ne D_i$ for every $i\in I$, so $D_i/A_i$ is a non-trivial torsion (divisible) group. Therefore, $r_{p_i}(D_i/A_i) >0$ for some prime $p_i$. 
There is some prime $q$, such that $p_i = q$ for infinitely many indexes $i\in I\setminus J$, so that $r_q(D'/G_1)$ is infinite. 
In the case (i) this is clear as $I$ is uncountable. In case (ii) this follows from the fact that all groups $D_i/A_i$ are pairwise isomorphic, torsion and non-trivial. This proves, that $r_q(D'/G_1)$ is infinite, hence $r_q(D/G_1)$ is infinite as well. This contradicts (\ref{last:Eq+}). \end{proof}

 With a slight modification the above proof we can give the following more precise result. Suppose that $f\colon G \to D$ is a homomorphism that is a coarse 
 equivalence and $G$ is fully decomposable, while $D$ is divisible. Then $r_0(G/d(G)) < \omega$ in case $G$ is either  uncountable  or homogeneous. In other words, if a  fully decomposable torsion-free abelian group $G$ is coarsely equivalent (i.e.,  ``as close as possible'' from the large-scale point of view) to a divisible group, then $G$ is also ``as close as possible'' to a divisible group from algebraic point of view.  

We are not aware if one can replace the group   (\ref{last:Eq**}) in the above theorem by an arbitrary reduced torsion-free group.

As a corollary we prove that there exists no homomorphism which is also a coarse equivalence between a divisible group and a free abelian group, in case at least one of them has infinite free-rank. 

\begin{corollario}\label{mega:example}
Let $D$ be a divisible torsion free abelian group of infinite free rank. Then: 
\begin{compactenum}[(i)]
  \item there is no homomorphism which is also a coarse equivalence from $D$ to any reduced abelian group $F$; 
  \item if $F$ is a free abelian group, then  there is no homomorphism from $F$ to $D$ which is also a coarse equivalence
\end{compactenum}		
 \end{corollario}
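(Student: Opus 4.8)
The plan is to treat the two items separately: item (i) reduces to a purely algebraic observation about divisible groups together with Corollary \ref{coro:homo_ce}, while item (ii) will be extracted from the rigidity of Theorem \ref{ex:example}.

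For item (i), I would argue by contradiction. Suppose $f\colon D\to F$ is a homomorphism and a coarse equivalence, with $F$ reduced. The image $f(D)$ is a divisible subgroup of $F$, being a homomorphic image of the divisible group $D$; since $F$ is reduced this forces $f(D)\subseteq d(F)=\{0\}$, so $f$ is the trivial homomorphism and $\ker f=D$. But Corollary \ref{coro:homo_ce}(i) requires $\ker f\in\II_{r_0,\omega}$ for a coarse equivalence, i.e.\ $D$ must lie in a subgroup of finite free rank, whence $r_0(D)<\omega$ --- contradicting the hypothesis. (One may equally well invoke Proposition \ref{classif1} for the trivial homomorphism $D\to\{0\}$, through which $f$ then factors.) I expect no difficulty here.

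For item (ii), let $F=\bigoplus_{i\in I}\Z$ be free abelian and suppose $f\colon F\to D$ is a homomorphism and a coarse equivalence. I would first note that $F$ is fully decomposable and homogeneous (all summands are $\cong\Z$) with $d(F)=\{0\}$, so the more precise version of Theorem \ref{ex:example} recorded in the remark after its proof gives $r_0(F)=r_0(F/d(F))<\omega$. Hence $f(F)$, a homomorphic image of $F$, has finite free rank; on the other hand $f$ is large-scale surjective, so $f(F)$ is a large subgroup of the torsion-free group $D$, and Proposition \ref{large} yields $r_0(D/f(F))<\omega$. By additivity of $r_0$ we get $r_0(D)=r_0(f(F))+r_0(D/f(F))<\omega$, contradicting the hypothesis.

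Should one prefer to avoid the informal refinement, the same contradiction follows straight from Theorem \ref{ex:example}. By Corollary \ref{coro:homo_ce}(i), $\ker f$ has finite free rank; being a subgroup of a free abelian group it is itself free of finite rank, hence contained in a finite direct summand $L$ of $F$. Writing $F=L\oplus F'$ and running the reduction exactly as inside the proof of Theorem \ref{ex:example} (quotient out $\ker f$, then split off the finite-rank summand $L/\ker f$), the restriction $f'\colon F'\to D$ is an injective coarse equivalence onto a large subgroup. Since $\E_{r_0,\omega}$ is a functorial coarse structure, the group isomorphism $F'\to f'(F')$ is an asymorphism, and by Remark \ref{deleted} $f'(F')$ carries its subspace coarse structure, so I may assume that $F'$ is a large subgroup of $D$ whose inclusion is a coarse equivalence. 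As $F'$ is fully decomposable, reduced and homogeneous, Theorem \ref{ex:example} applies through its case (ii) and forces $r_0(D)<\omega$, the desired contradiction. The only step requiring care is this reduction to the hypotheses of Theorem \ref{ex:example}; but it is verbatim the one already performed in that proof, so I do not anticipate a genuine obstacle.
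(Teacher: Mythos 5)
Your proposal is correct and follows essentially the same route as the paper: item (i) by observing that a homomorphism from a divisible group to a reduced group must be trivial and that the trivial map cannot be a coarse equivalence when $r_0(D)\geq\omega$ (the paper cites Proposition \ref{classif1}, you equivalently use Corollary \ref{coro:homo_ce}(i)), and item (ii) by invoking Theorem \ref{ex:example} in its homogeneous case. Your item (ii) merely spells out, in two equivalent ways, the application that the paper compresses into ``follows from Theorem \ref{ex:example}'', and both of your variants are sound.
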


\begin{proof}  
(i) Assume the existence of a homomorphism $f\colon D\to F$ which is a coarse equivalence. Since $D$ is divisible and $F$ is reduced, $f$ is necessarily the null homomorphism. In particular, 
	the trivial homomorphism $G\to\{0\}$ must be  a coarse equivalence. By the above proposition, this yields $r_0(G)< \omega$, a contradiction.
	
(ii) Follows from Theorem \ref{ex:example}. 
\end{proof}
	
	Let us note that a much stronger result can be proved than just item (i) in the above theorem: if a homomorphism 
	$f\colon D\to G$ to a torsion-free group $G$ is a coarse equivalence, then $f(D)$ is a finite-co-rank subgroup of $G$. More precisely, 
	$G = f(D) \oplus G_1$, where $r_0(G_1)< \infty$ and $f(D) \cong D$ is divisible.

 In this subsection we have provided some results for coarse groups in which the notion of divisibility plays an important role. Let us also mention that divisibility has a great impact in some properties of the coarse structures on the subgroup lattices considered in \cite{DikProZav}.


\subsection{Small size vs small asymptotic dimension}\label{sub:Ban_Che_Lya}

For a coarse space $(X, \E)$ call a subset $A$ of $X$ {\em small} if for each large set $L$ of $X$ the set $L \setminus A$ remains large in $X$ (this notion, along with other similar notions for size, is due to \cite{ProBa}, see also \cite{ProtSurvey} for applications to groups, and \cite{DikZa_size} for further progress in this direction). 
Let $\Sm(X)$ denote the family of all small subsets of the coarse space $X$. Furthermore, let $\D_<(X)$ denote the family of all subsets $A$ with $\asdim A<\asdim X$. These two families are ideals in $X$.

Small sets are considered as the large-scale counterpart of nowhere dense subsets in topology (\cite{BanLya}). It is a classical result that in $\R^n$ the ideal of nowhere dense subsets coincides with the one of those subsets that have covering dimension strictly less than $n$. Banakh, Chervak and Lyaskovska showed the large-scale counterpart of this classical result, \cite[Theorem 1.6]{banchelya}, which states that, for every coarse space $X$, the inclusion $\D_<(X) \subseteq\Sm(X)$ holds, while the opposite inclusion holds if $X$ is coarsely equivalent to $\R^n$, endowed with its compact-group coarse structure. 

 Moreover, for locally compact abelian groups endowed with their compact-group coarse structure, the authors provide the following characterisation.


\begin{teorema}\cite[Theorem 1.7]{banchelya} For a locally compact abelian group the following properties are equivalent:
\begin{compactenum}[(i)]
\item $\D_<(G)=\Sm(G)$;
\item $G$ is compactly generated;
\item $G$ is coarsely equivalent to $\R^n$, for some $n\in\N$.
\end{compactenum}
\end{teorema}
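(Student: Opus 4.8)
The statement to establish is the equivalence, for a locally compact abelian group $G$ endowed with its compact-group coarse structure, of: (i) $\D_<(G)=\Sm(G)$; (ii) $G$ is compactly generated; (iii) $G$ is coarsely equivalent to $\R^n$ for some $n\in\N$. The natural strategy is to prove the cycle (ii)$\to$(iii)$\to$(i)$\to$(ii). First I would dispatch (ii)$\to$(iii): by the structure theory of locally compact abelian groups, a compactly generated one is topologically isomorphic to $\R^n\times\Z^m\times K$ for some compact group $K$ and integers $n,m\geq 0$. By Corollary \ref{coro:q_ce_compact} (quotienting by the compact factor $K$ is a coarse equivalence for compact-group coarse structures) we reduce to $\R^n\times\Z^m$, and then the inclusion $\Z^m\hookrightarrow\R^m$ — the motivating example of a coarse equivalence recalled in the Introduction — together with the behaviour of products yields a coarse equivalence onto $\R^{n+m}$. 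So (iii) holds with the exponent $n+m$.

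Next, (iii)$\to$(i): small sets and the ideal $\D_<$ are coarse invariants (smallness is defined purely in terms of large sets, and $\asdim$ is a coarse invariant), so $\D_<(G)=\Sm(G)$ transfers across any coarse equivalence. Since $G$ is coarsely equivalent to $\R^n$, it suffices to know $\D_<(\R^n)=\Sm(\R^n)$, and this is exactly the content of \cite[Theorem 1.6]{banchelya} quoted just above: the inclusion $\D_<(X)\subseteq\Sm(X)$ holds for every coarse space, and the reverse inclusion holds when $X$ is coarsely equivalent to $\R^n$ with its compact-group coarse structure. Thus (i) follows immediately.

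The remaining and genuinely substantive implication is (i)$\to$(ii), or equivalently its contrapositive: if $G$ is \emph{not} compactly generated, exhibit a small subset of $G$ whose asymptotic dimension is not strictly less than $\asdim G$ — in fact one wants a large subset $A$ with $\asdim A=\asdim G$ that is nonetheless small, or else a small set failing the dimension bound. Here one uses again the structure theory: a non-compactly-generated locally compact abelian group $G$ has, after quotienting by a compact subgroup (a coarse equivalence), an open subgroup of the form $\R^n\times H$ with $H$ discrete and \emph{not} finitely generated, or a direct sum / increasing union of infinitely many nontrivial pieces. The idea is to write $G$ (up to coarse equivalence) as an increasing union of proper large subgroups $G_0\subsetneq G_1\subsetneq\cdots$ each coarsely equivalent to $\R^n$ (so $\asdim G_k=n=\asdim G$), and build from a suitable ``diagonal'' selection a set $A$ which is large (hence $\asdim A=\asdim G$, so $A\notin\D_<(G)$) but for which every large $L$ still has $L\setminus A$ large — i.e. $A\in\Sm(G)$ — witnessing $\D_<(G)\subsetneq\Sm(G)$. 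I expect this construction — pinning down, in the general non-compactly-generated locally compact abelian case, a concrete small set that is not dimension-deficient — to be the main obstacle; the argument presumably proceeds by reduction via compact quotients (Corollary \ref{coro:q_ce_compact}) and the classification of compactly generated pieces to a countable discrete model such as $\bigoplus_{\N}\Z$ or $\Z^{(\N)}\times\R^n$, where the small-but-full-dimensional set can be written down explicitly and its smallness checked directly from the definition of large sets in a coarse group, $E_K[L]=LK=G$ for some $K\in\rC(G)$. One should also remember to handle the degenerate case $G$ compact (so $\asdim G=0$, both ideals are improper, and all three conditions hold trivially) separately at the outset.
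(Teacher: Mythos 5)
First, a point of order: the paper does not prove this statement at all — it is quoted verbatim from \cite[Theorem 1.7]{banchelya} and used purely as motivation for \S\ref{sub:Ban_Che_Lya} — so there is no in-paper proof to compare yours against. Judged on its own terms, your cycle handles the easy legs correctly: (ii)$\to$(iii) via the structure theorem, Corollary \ref{coro:q_ce_compact}, and the coarse equivalence $\Z^m\hookrightarrow\R^m$ is fine, and (iii)$\to$(i) is immediate once one is allowed to quote \cite[Theorem 1.6]{banchelya} as the paper does.

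The genuine gap is in (i)$\to$(ii), and your sketch of it contains an outright contradiction: you propose to exhibit a set $A$ that is simultaneously large and small. No nonempty coarse space admits such a set — if $A$ is large, take $L=A$ in the definition of smallness and $L\setminus A=\emptyset$ is not large. What the contrapositive actually requires is a \emph{small} (in particular non-large) set $A$ with $\asdim A$ not strictly below $\asdim G$. The workable route close to your own reduction is: pass via a compact open subgroup to $\R^n\times D$ with $D$ discrete and not finitely generated; write $G$ as an increasing union of compactly generated open subgroups $G_k$; note that no $G_k$ is large (if $G_kK=G$ with $K$ relatively compact and $G_k=\langle C_k\rangle$ with $C_k$ compact, then $G$ would be compactly generated), and invoke the classical dichotomy that a subgroup which is not large is small; finally pick such a subgroup of full asymptotic dimension — when $\asdim G<\infty$ the relevant free rank is attained at a finite stage $G_k$, and when $\asdim G=\infty$ one takes a non-large subgroup of infinite free rank. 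Your proposal explicitly defers exactly this step, so as written the substantive implication is not established. (Minor: in the compact case both ideals equal $\{\emptyset\}$ — they are trivial, not ``improper''.)
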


They ask a description of the spaces $X$ when the equality $\D_<(X)=\Sm(X)$ holds true (\cite[Problem 1.3]{banchelya}). 
Obviously, it holds true when $G$ is compact, since then $\D_<(X) =  \Sm(X) = \{\emptyset\}$. 
Here we provide a wealth of counter-examples to this equality 
which are based on the following trivial observation. 
If, for a coarse space $X$, $\asdim X = 0$, then  $\D_<(X) = {\emptyset}$ consists of only the empty subset of $X$. Therefore, to provide examples where the equality $\D_<(X) =  \Sm(X)$ does not hold it suffices to find spaces $X$ with $\asdim X = 0$ and such that $X$ has a non-empty small set. 


%


\begin{proposizione}\label{prop:ban_che_lya}
Let $i$ be an subadditive, bounded cardinal invariant, $\kappa$ be an uncountable cardinal and $G$ be an abelian group with $i(G)\ge \kappa$.
Then $[G]^{<\kappa}\subseteq\Sm(G,\E_{\II_{i,\kappa}})$. In particular, 
$$
\D_<(G,\E_{\II_{i,\kappa}})=\{\emptyset\}\subsetneq[G]^{<\kappa} \subseteq\Sm(G,\E_{\II_{i,\kappa}}).
$$
\end{proposizione}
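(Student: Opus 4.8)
The plan is to test smallness directly: fixing $A\in[G]^{<\kappa}$ and an arbitrary large subset $L$ of $(G,\E_{\II_{i,\kappa}})$, I want to exhibit a subgroup $K'$ with $i(K')<\kappa$ such that $(L\setminus A)+K'=G$, which is exactly the assertion that $L\setminus A$ is large. Since $\BB_{i,\kappa}$ is a base of $\II_{i,\kappa}$ and $E_K^\lambda[L]=L+K$ for a subgroup $K$, largeness of $L$ yields a subgroup $K\leq G$ with $i(K)<\kappa$ and $L+K=G$; this is the subgroup I will enlarge.

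The first key step is a cardinality observation: under the hypothesis $i(G)\geq\kappa$, every subgroup $K$ with $i(K)<\kappa$ has index $[G:K]\geq\kappa$. Indeed, a transversal $T$ of $K$ of size $<\kappa$ would generate a subgroup $M=\langle T\rangle$ with $i(M)\leq|M|\leq\max(|T|,\omega)<\kappa$ (here it is essential that $\kappa$ is uncountable), and since $K+M=G$, subadditivity would give $i(G)\leq i(K)+i(M)<\kappa$, a contradiction. The second step introduces the relevant combinatorics on $G/K$: call a coset $g+K$ \emph{bad} if $L\cap(g+K)\subseteq A$, and let $\overline B\subseteq G/K$ collect the bad cosets. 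Choosing for each bad coset a point of $L$ lying in it (which is automatically in $A$) defines an injection $\overline B\hookrightarrow A$, so $|\overline B|\leq|A|<\kappa$. Consequently $|\overline B-\overline B|<\kappa\leq|G/K|$, so there exists $\overline x\in G/K\setminus(\overline B-\overline B)$; lift it to $x\in G$ and set $K':=K+\langle x\rangle$. Then $i(K')\leq i(K)+i(\langle x\rangle)<\kappa$, because $\langle x\rangle$ is cyclic so $i(\langle x\rangle)\leq\omega<\kappa$; hence $K'\in\BB_{i,\kappa}$ and $E_{K'}^\lambda\in\E_{\II_{i,\kappa}}$.

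The final step verifies $(L\setminus A)+K'=G$. As $K\subseteq K'$, every coset of $K'$ is a disjoint union of $K$-cosets whose classes form a coset of $K'/K=\langle\overline x\rangle$ in $G/K$; and $L\setminus A$ meets every good $K$-coset while meeting no bad one. So the claim reduces to showing that no coset of $\langle\overline x\rangle$ is contained in $\overline B$. But if $\overline g+\langle\overline x\rangle\subseteq\overline B$, then for every $\overline m\in\langle\overline x\rangle$ we have $\overline m=(\overline g+\overline m)-\overline g\in\overline B-\overline B$, so $\langle\overline x\rangle\subseteq\overline B-\overline B$, contradicting the choice of $\overline x$. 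This shows $A\in\Sm(G,\E_{\II_{i,\kappa}})$, proving the inclusion. For the displayed chain it remains to note that $\II_{i,\kappa}$ is linear (its base $\BB_{i,\kappa}$ consists of subgroups), hence $(G,\E_{\II_{i,\kappa}})$ is cellular and so $\asdim(G,\E_{\II_{i,\kappa}})=0$, whence $\D_<(G,\E_{\II_{i,\kappa}})=\{\emptyset\}$; strictness follows since $\{0\}\in[G]^{<\kappa}$.

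The main obstacle is the choice of the enlargement $K'$. The naive attempts — taking $K'=K+\langle A\rangle$, or trying to arrange $|K'/K|>|\overline B|$ — fail because $i$ is only assumed \emph{bounded}, which gives no lower bound on the cardinalities of $i$-small subgroups, and when $\kappa$ is a successor cardinal one cannot in general enlarge $K$ within $\BB_{i,\kappa}$ to an index exceeding $|\overline B|$. The decisive point is that it suffices for $K'/K$ to miss the \emph{difference set} $\overline B-\overline B$, which has size $<\kappa\leq|G/K|$; this makes a single cyclic extension of $K$ enough.
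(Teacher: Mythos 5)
Your proof is correct, but it takes a genuinely different route from the paper's. Both arguments start the same way (reduce to a subgroup $K$ with $i(K)<\kappa$ and $L+K=G$, and use subadditivity plus boundedness of $i$ together with $i(G)\ge\kappa$ and the uncountability of $\kappa$ to extract a cardinality bound), but they diverge in how the ideal is enlarged. The paper's proof observes that the large set $L$ itself must have $\lvert L\rvert\ge i(\langle L\rangle)\ge\kappa>\lvert A\rvert$, picks a point $a\in L\setminus A$, and enlarges $K$ to $K'=\langle A-a\rangle+K$: the translate $A-a$ is absorbed into the ideal, the point $a$ serves as a base point giving $(L\setminus A)+K'\supseteq a+(A-a)=A$, hence $(L\setminus A)+K'\supseteq L$, hence $=G$ since $K\subseteq K'$. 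Your proof instead bounds the \emph{index} $[G:K]\ge\kappa$, counts the bad $K$-cosets, and uses the difference-set trick to find a single element $\overline x\notin\overline B-\overline B$, so that the enlargement is only by a cyclic subgroup $\langle x\rangle$. The paper's argument is shorter and avoids any coset combinatorics; yours buys a quantitatively smaller enlargement (cyclic rather than generated by a translate of the whole set $A$) and isolates the reusable fact that every $i$-small subgroup has index at least $\kappa$, which is slightly more information than the paper extracts. Both are complete; the ``in particular'' clause is handled identically (linearity of $\BB_{i,\kappa}$ gives cellularity, hence $\asdim=0$ and $\D_<=\{\emptyset\}$), which the paper relegates to the discussion preceding the proposition.
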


\begin{proof} Let $S$ be a subset of $G$ with $\lvert S\rvert< \kappa$. To check that $S\in \Sm(G,\E_{\II_{i,\kappa}})$
pick a large subset $A$ of $G$ and a subgroup $K\in\II_{i,\kappa}$ such that 
\begin{equation}\label{ver:last:eq}
A+K=G.
\end{equation}
 The 
subadditivity and boundedness of $i$, combined with (\ref{ver:last:eq}), entail 
$$
\kappa \leq i(G) \leq i(\langle A \rangle) + i(K).
$$ 
Along with $i(K) < \kappa$, this implies that $\kappa \leq i(\langle A \rangle)$. Therefore, boundedness of $i$ gives 
$$
|A|= |\langle A \rangle| \geq i(\langle A \rangle) \geq \kappa > |S|.
$$  
Hence, there exists an element $a\in A\setminus S$. 
The set $S-a:=\{s-a\mid s\in S\}$ belongs to $[G]^{<\kappa}$, so the subgroup $\langle S-a\rangle+K$ belongs to $\II_{i,\kappa}$.
On the other hand, it is easy to verify that $(A\setminus S)+(\langle S-a\rangle+K)=G$ (by the choice of $a$, $(A\setminus S)+(\langle S-a\rangle+K)$
contains $S$, hence contains $A$ as well, so (\ref{ver:last:eq}) applies). 
This proves that $A\setminus S$ is large, so $S$ is small. 

%
\end{proof}

We can refine Proposition \ref{prop:ban_che_lya} if we consider as cardinal invariant the normalised cardinality. In fact, it is not hard to prove the following statement: {\em Let $G$ be an infinite group with cardinality $\kappa$. Then $[G]^{<\kappa}\subseteq\Sm(G,\E_{\II_{\kappa}})$.}



\section{Quasi-homomorphisms}\label{section:quasi_hom}

\begin{definizione}\label{def:q_hom}[\cite{ros}]
A map $f\colon G\to (H,\E_\II)$ from a group $G$ to a coarse group $(H,\E_\II)$ is a {\em quasi-homomorphism} if the maps $f^\prime,f^{\prime\prime}\colon G\times G\to H$, where $f^\prime\colon(g,h)\mapsto f(gh)$ and $f^{\prime\prime}\colon(g,h)\mapsto f(g)f(h)$, are close (equivalently, if there exists $K\in\II$ such that, for every $g,h\in G$, $f(gh)\in f(g)f(h)K$). 

If $E\in\E_{\II}$ is a symmetric entourage such that $\{(f(gh),f(g)f(h))\mid g,h\in G\}\subseteq E$, then $f$ is called an {\em $E$-quasi-homomorphism}.
In case $E=E_M$ for some $M \in \II$, we briefly write $M$-quasi-homomorphism to say that $f(gh)\in f(g)f(h)M$, for every  $g,h\in G$. 
\end{definizione}

By taking $(H,\E_\II)=(\R,\E_{\BB_d})$, where $d$ is the usual euclidean metric on $\R$, we recover the classical notion.

Almost all the results of this section can be generalised for quasi-coarse structures on monoids or semi-coarse structures on loops (\cite{Za}). However, for the sake of simplicity and for the purpose of this paper, we prefer to state and prove them just for groups.

\begin{remark}\label{rem:computations_quasi_homo}
	Let $G$ be a group, $(H,\E_\II)$ be a coarse group, $M\in\II$, and $f\colon G\to H$ be an $M$-quasi-homomorphism. We can assume, without loss of generality, that $M=M^{-1}$. 
	Since $f(e_G)=f(e_G\cdot e_G)\in f(e_G)f(e_G)M$, we have that $e_H\in f(e_G)M = f(e_G)M^{-1}$ and $f(e_G)\in e_HM =M$. Moreover, for every $x\in G$, $f(e_G)=f(xx^{-1})\in f(x)f(x^{-1})M$,
	and thus, in particular,
	$f(x)^{-1}\in f(x^{-1})Mf(e_G)^{-1}\subseteq f(x^{-1})MM$.
	Thanks to this computation, in the sequel when we say that $f$ is an $M$-quasi-homomorphism, we assume that $M\in\II$ satisfies 
	$$
	f(e_G)\in M, \ \ 	f(y)^{-1}\in f(y^{-1})M\ \mbox{ and } \ f(y^{-1})\in f(y)^{-1}M,
	$$
	for every $y\in G$. 
\end{remark}

 Let us start with some very easy examples.
 
\begin{example}\label{ex:easy_quasi_homo}
Let $f\colon G\to(H,\E_{\II})$ be a map between a group and a coarse group.
\begin{compactenum}[(i)]
 \item If $f$ is a homomorphism, then $f$ is a quasi-homomorphism.
 \item $f$ is a quasi-homomorphism, if $f$ is {\em bounded} (i.e., $f(G)$ is bounded in $H$), or, equivalently, if $f(G)\in\II$  by Remark \ref{rem:computations_quasi_homo}.  
In particular,  $f$ is a quasi-homomorphism when $\II=\mathcal P(H)$. As a consequence, we have that every map $f\colon(G,\mathcal P(G))\to(H,\mathcal P(H))$ is both a quasi-homomorphism and a coarse equivalence.
 \item If $\II=\{e_H\}$, then $f\colon G\to H$ is a quasi-homomorphism if and only if it is a homomorphism.
 \item An asymorphism may not be a quasi-homomorphism. In fact, for example, for every group $G$, endowed with the discrete coarse structure $\E_{\{\{e\}\}}$, every bijective self-map $f\colon G\to G$ is automatically an asymorphism. However, $f$ is a quasi-homomorphism if and only if $f$ is 
an isomorphism, according to item (iii). Hence, a counterexample can be easily produced. 
\end{compactenum}
\end{example}

\begin{example} It is well-known that surjective homomorphisms preserve various properties of the domain, e.g., having finite rank. Let us see that the counterpart of this property remains true also for quasi-homomorphisms with respect to the group coarse structure induced by $r_0$
and $\kappa = \omega$ in the following weaker form. 

Let $f\colon G \to H$ be an $H_1$-quasi-homomorphism, where $H_1$ is a subgroup of $H$ with $r_0(H_1)<\infty$,  and suppose that $G$ is finitely generated. Then 
$$
r_0(\langle f(G)\rangle) < \infty.
$$ 
More precisely, if $H_1$ contains the images of all (finitely many) generators of $G$ (that can be achieved without loss of generality), then also $f(G)$ is contained in $H_1$, so has finite free rank.

Let $X$ be the finite set of generators of $G$ and assume that $f(X) \subseteq H_1$. We assume that $e\notin X$. 
We argue by induction on $n:=\lvert X\rvert$. The case $n = 0$, i.e., $G= \{0\}$, is trivial, so we may 
assume that $n>0$ and that the assertion is proved for $n-1$. Then $ X\ne \emptyset$ so we can fix
an element $x\in X$ and let $Y =  X \setminus \{x\}$ and $G_1 = \langle Y \rangle$. Then $f(G_1) \leq H_1$
by our inductive hypothesis. Take any $g \in G = G_1 + \langle x \rangle$, then $g = g_1 + kx$. 
Therefore,  $f(g) \in f(g_1) + f(kx) + H_1 =  f(kx) + H_1$, as $ f(g_1)\in H_1$. By our assumption, $f(x) \in H_1$. If $k>0$, then a simple inductive argument shows that $f(kx) \in H_1$ as well. 
If $k <0$, then $f(kx) = -f(-kx) + H_1$ (by Remark \ref{rem:computations_quasi_homo}). Now $-f(-kx) \in H_1$ and we are done. 
\end{example}

This example leaves open the question on whether quasi-homomorphisms preserve finiteness of rank: 

\begin{domanda}
If $f\colon G \to H$ is a quasi-homomorphism, with respect to the group coarse structure induced by $r_0$
and $\kappa = \omega$, and $r_0(G)< \infty$, is it true that $r_0(\langle f(G)\rangle) < \infty$ as well?
\end{domanda}

Here come two very important properties of quasi-homomorphisms.

\begin{proposition}\label{prop:close_quasi_homo}
Let $f,g\colon G\to(H,\E_\II)$ be two maps between a group $G$ and a coarse group $(H,\E_{\II})$. Suppose that $f\sim_M g$
for some $M \in \II$. If $M^H \in {\II}$,  then $f$ is a quasi-homomorphism if and only if $g$ is a quasi-homomorphism.
\end{proposition}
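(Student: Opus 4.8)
By symmetry it suffices to prove one implication: assume $g$ is a quasi-homomorphism and show $f$ is one. So fix $N \in \II$ such that $g(xy) \in g(x)g(y)N$ for all $x,y \in G$, and assume as well that $N = N^{-1}$; we are also given $M \in \II$ with $f \sim_M g$, i.e.\ $g(x) \in f(x)M$ for all $x$, and by choosing $M$ symmetric we have $f(x) \in g(x)M$ as well. The target is an element $P \in \II$ with $f(xy) \in f(x)f(y)P$ for all $x,y \in G$, and the natural candidate is built from $M$, $N$ and conjugates of $M$.

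First I would chase the inclusions directly. Start from $f(xy) \in g(xy)M \subseteq g(x)g(y)NM$. Next replace $g(x)$ and $g(y)$ by $f(x)$ and $f(y)$: we have $g(x) \in f(x)M$, so $g(x)g(y) \subseteq f(x)M g(y)$, and $g(y) \in f(y)M$, so $f(x)Mg(y) \subseteq f(x)M f(y)M$. The obstacle is that $M$ sits between $f(x)$ and $f(y)$, not to the right of $f(y)$, so this is not yet of the form $f(x)f(y)(\text{something in }\II)$. This is exactly where the hypothesis $M^H \in \II$ enters: using \eqref{eq:unif_bilateral}, for the element $h = f(y)$ we have $M f(y) = f(y) f(y)^{-1} M f(y) = f(y) M^{f(y)} \subseteq f(y) M^H$. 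Hence $f(x) M f(y) M \subseteq f(x) f(y) M^H M$, and combining everything,
$$
f(xy) \in g(x)g(y)NM \subseteq f(x) M f(y) M N M \subseteq f(x) f(y) M^H M N M.
$$
So $P := M^H M N M$ works: it lies in $\II$ because $\II$ is closed under products (Definition \ref{def:group_ideal}(ii)) and contains $M, N$ and $M^H$ (the latter by the hypothesis $M^H \in \II$, since $M^H \subseteq M^G$... more precisely $M^H \in \II$ is assumed directly).

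The main (and only) real obstacle is the non-commutativity issue just described: shifting the ``error'' term $M$ past $f(y)$ produces a conjugate $M^{f(y)}$, which need not lie in $\II$ in general — precisely the reason the statement requires $M^H \in \II$ rather than just $M \in \II$, and precisely why Proposition \ref{prop:close_quasi_homo} is later invoked in the setting of bilateral coarse groups (where $K^G \in \II$ for all $K \in \II$, so the hypothesis is automatic). Everything else is a routine inclusion chase, and the reverse implication follows by exchanging the roles of $f$ and $g$, using that $M = M^{-1}$ guarantees $f \sim_M g$ implies $g \sim_M f$.
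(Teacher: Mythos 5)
Your proposal is correct and follows essentially the same route as the paper: the same inclusion chase, the same use of \eqref{eq:unif_bilateral} to shift the error term $M$ past $f(y)$ at the cost of a conjugate contained in $M^H$, and the same resulting error set of the form $M^H M N M$ (the paper writes out the other implication, $f$ quasi-homomorphism $\Rightarrow$ $g$ quasi-homomorphism, and notes the converse is similar, exactly as you note symmetry). Your remark on why the hypothesis $M^H\in\II$ is needed matches the paper's intent precisely.
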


\begin{proof} Suppose that $K\in\II$ is an element such that $f$ is a $K$-quasi-homomorphism. Then, for every $x,y\in G$,
$$
g(xy) \in f(xy)M \subseteq f(x)f(y) KM\subseteq g(x)M g(y)MKM\subseteq g(x)g(y)M^HMKM,
$$
according to \eqref{eq:unif_bilateral}. Therefore, $g$ is a $M^H MKM$-quasi-homomorphism. 

The opposite implication can be similarly shown.
\end{proof}

Inspired by Proposition \ref{prop:close_quasi_homo}, the reader may think that every quasi-homomorphism is close to a homomorphism. However, this is not the case, as Example \ref{ex:no_hom_close}(i),(ii) shows.

\begin{example}\label{ex:no_hom_close}
\begin{compactenum}[(i)]
	\item Consider the floor map $\lfloor\cdot\rfloor\colon\R\to\Z$, which is a quasi-homomorphism if we endow $\Z$ with the finitary-group coarse structure. However, since $\R$ is a divisible group, the only homomorphism from $\R$ to $\Z$ is the null-homomorphism, which is not close to $\lfloor\cdot\rfloor$.
	\item Let $f\colon\Z\to 2\Z$ be the map that associates to every integer $n$ the largest even number smaller than $n$. If $2\Z$ is endowed with the finitary-group coarse structure, then $f$ is a quasi-homomorphism. However it is not close to any homomorphism.
	\end{compactenum}
\end{example}

\begin{proposition}\label{prop:comp_quasi-homo}
Let $G$ be a group, $(H,\E_{\II})$ and $(K,\E_{\mathcal J})$ be two coarse groups, $f\colon G\to H$ be a quasi-homomorphism, and $g\colon H\to K$ be a bornologous quasi-homomorphism. Then $g\circ f$ is a quasi-homomorphism.
\end{proposition}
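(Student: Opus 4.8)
The plan is to chase the defining inequalities of the two quasi-homomorphisms through the composite, being careful that $g$ is only \emph{bornologous} (not a homomorphism), so images of entourages of $H$ under $g\times g$ must be absorbed into $\E_{\mathcal J}$. Fix witnesses: let $f$ be an $M$-quasi-homomorphism with $M\in\II$ (chosen as in Remark \ref{rem:computations_quasi_homo}, so in particular $M=M^{-1}$), and let $g$ be an $N$-quasi-homomorphism with $N\in\mathcal J$. First I would note that for any $x,y\in G$ we have $f(xy)\in f(x)f(y)M$, hence $f(xy)=f(x)f(y)m$ for some $m\in M$ depending on $x,y$.

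Next I would apply $g$ and compare $g(f(xy))$ with $g(f(x)f(y))$ and then with $g(f(x))g(f(y))$. For the first comparison, write $a:=f(x)f(y)$ and observe $f(xy)=am$ with $m\in M$; the pair $(a,am)$ lies in the entourage $E_M$ of $H$, so $(g(a),g(am))$ lies in $(g\times g)(E_M)$, which belongs to $\E_{\mathcal J}$ because $g$ is bornologous. Thus there is $L\in\mathcal J$ with $(g\times g)(E_M)\subseteq E_L$, and therefore $g(f(xy))\in g(f(x)f(y))L$, with $L$ independent of $x,y$. For the second comparison, the quasi-homomorphism property of $g$ gives directly $g(f(x)f(y))\in g(f(x))g(f(y))N$. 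Combining, $g(f(xy))\in g(f(x))g(f(y))NL$, so $g\circ f$ is an $NL$-quasi-homomorphism, and $NL\in\mathcal J$ since $\mathcal J$ is a group ideal. This establishes the claim.

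The main point requiring care — and the only genuine obstacle — is the passage from $(a,am)\in E_M$ to $(g(a),g(am))$ lying in a \emph{fixed} entourage of $K$: this is exactly where bornologousness of $g$ is used, and it is crucial that the resulting $L$ depends only on $M$ (equivalently only on $f$ and $g$) and not on the pair $(x,y)$, which is precisely what bornologousness delivers. Everything else is a routine manipulation inside the group ideals $\II$ and $\mathcal J$, using that they are closed under products. Note also that, unlike Proposition \ref{prop:close_quasi_homo}, no uniform bilaterality hypothesis ($M^H\in\II$ or $N^K\in\mathcal J$) is needed here, since we never conjugate: the argument stays on the "left" side throughout.
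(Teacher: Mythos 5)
Your proof is correct and follows essentially the same route as the paper's: both chase the defining inclusions through the composite, using the quasi-homomorphism property of $f$, then that of $g$, and absorbing the error coming from $M$ via the bornologousness of $g$. The only (immaterial) difference is that you compare $g(f(xy))$ with $g(f(x)f(y))$ at the entourage level, obtaining the constant $NL$ with $(g\times g)(E_M)\subseteq E_L$, whereas the paper applies the quasi-homomorphism property of $g$ a second time to peel off $g(m)$ and lands on the constant $g(M)NN$; both constants lie in $\mathcal J$ for the same reasons.
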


\begin{proof} Suppose that $f$ is an $M$-quasi-homomorphism and $g$ is an $N$-quasi-homomorphism, for some $M\in\II$ and $N\in\mathcal J$. 
Then, for every $x,y\in G$,
$$
g(f(xy))\in g(f(x)f(y)M)\subseteq g(f(x))g(f(y)M)N\subseteq g(f(x))g(f(y))g(M)NN,
$$
where $g(M)NN\in\mathcal J$ (according to Proposition \ref{prop:borno_quasi-homomorphism}), and so $g\circ f$ is a $g(M)NN$-quasi-homomorphism.
\end{proof}

Note that, without the assumption of bornology of $g$ in Proposition \ref{prop:comp_quasi-homo}, it is not true that composition of quasi-homomorphisms is still a quasi-homomorphism (see Example \ref{ex:no_hom_close*}(i)). As mentioned in the introduction, this fact has prevented any categorical systematization of quasi-homomorphisms up to now.

\begin{example}\label{ex:no_hom_close*}
\begin{compactenum}[(i)]
	\item By using Example \ref{ex:easy_quasi_homo}(ii), we are able to construct two quasi-homomorphisms whose composite is not a quasi-homomorphism. Let $G$ be a group and $\II$ be a group ideal on it which is different from $\mathcal P(G)$. If $f\colon G\to(G,\E_{\II})$ is not a quasi-homomorphism, we have the following situation:
	$$
	G\xrightarrow{f}(G,\E_{\mathcal P(G)})\xrightarrow{id_G}(G,\E_{\II}),
	$$
	where both arrows are quasi-homomorphisms (the identity is a homomorphism), but their composite is not a quasi-homomorphism. For example, set $G=\Z$, $\II=[\Z]^{<\omega}$, and $f=\lvert\cdot\rvert$, the absolute value.
	\item The inverse of a bijective homomorphism is a homomorphism. However, it is not true a similar result for quasi-homomorphisms. Let $G$ be a group and $f\colon G\to G$ be a bijective map which is not a homomorphism. Then $f\colon(G,\E_{\{\{e\}\}})\to(G,\E_{\mathcal P(G)})$ is a quasi-homomorphism, while its inverse is not a quasi-homomorphism (using Example \ref{ex:easy_quasi_homo}(iii), $f^{-1}$ is a quasi-homomorphism if and only if it is a homomorphism, which is not true). In Corollary \ref{coro:c_inverse_surj} we give a condition that guarantees that we can revert a bijective quasi-homomorphism obtaining a quasi-homomorphism as well.	
\end{compactenum}
\end{example}

Also quasi-homomorphisms allow us to prove a result (Proposition \ref{prop:borno_quasi-homomorphism}) similar to Proposition \ref{prop:homomorphism_group_ideal}.

\begin{proposizione}\label{prop:borno_quasi-homomorphism}
Let $f\colon(G,\E_{\II_G})\to(H,\E_{\II_H})$ be a quasi-homomorphism between two coarse groups. Then
\begin{compactenum}[(i)]
\item $f$ is bornologous if and only if $f(I)\in\II_H$, for every $I\in\II_G$;
\item if $\II_H$ is uniformly bilateral, then $f$ is effectively proper if and only if $f$ is proper.
\end{compactenum}
\end{proposizione}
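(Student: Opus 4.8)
The plan is to mimic the proofs of the analogous statements for homomorphisms (Proposition \ref{prop:homomorphism_group_ideal}(ii),(iii)), inserting the quasi-homomorphism error term $M$ at each step, and using Remark \ref{rem:computations_quasi_homo} to control the behaviour of $f$ on inverses and on $e_G$. Throughout I fix $M\in\II_H$ so that $f$ is an $M$-quasi-homomorphism in the normalised form of Remark \ref{rem:computations_quasi_homo}, i.e. $M=M^{-1}$, $f(e_G)\in M$, and $f(y^{-1})\in f(y)^{-1}M$ and $f(y)^{-1}\in f(y^{-1})M$ for all $y\in G$. An easy induction on word length then shows that there is, for each $K\in\II_G$, an element $K'\in\II_H$ with $f(K)\subseteq K'$ forces nothing by itself — rather, one observes that $f$ ``almost'' turns products into products up to bounded error, which is exactly what the two items quantify.

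For item (i): the ``only if'' direction is immediate since $(f\times f)(E_I^\lambda)\in\E_{\II_H}$ gives $f(I)=((f\times f)(E_I^\lambda))[f(e_G)]\cdot f(e_G)^{-1}\in\II_H$ after absorbing the bounded factor $f(e_G)\in M$. For the ``if'' direction, suppose $f(I)\in\II_H$ for every $I\in\II_G$. A basic entourage of $\E_{\II_G}$ has the form $E_K^\lambda=\bigcup_{g\in G}\{g\}\times gK$ with $K\in\II_G$; I must show $(f\times f)(E_K^\lambda)\subseteq E_L^\lambda$ for some $L\in\II_H$. For $(g,gk)\in E_K^\lambda$ one has, by the $M$-quasi-homomorphism property applied to the product $g\cdot k$, that $f(gk)\in f(g)f(k)M\subseteq f(g)\,f(K)\,M$, so $f(g)^{-1}f(gk)\in f(K)M=:L$, which lies in $\II_H$ by hypothesis (and closure of $\II_H$ under products). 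Hence $(f(g),f(gk))\in E_L^\lambda$, giving $(f\times f)(E_K^\lambda)\subseteq E_L^\lambda$ and bornologousness.

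For item (ii): that effectively proper implies proper is Proposition \ref{prop:eff_cop_wcop}-type triviality (preimages of bounded sets are entourage-sections of preimages of entourages). The content is the converse, and this is where uniform bilaterality of $\II_H$ is needed. Assume $f$ is proper, i.e. $f^{-1}(J)\in\II_G$ for every $J\in\II_H$ (using Proposition \ref{prop:homomorphism_group_ideal}(iii)-style reformulation: properness of $f$ between coarse groups means exactly $f^{-1}(\text{bounded})$ is bounded, equivalently $f^{-1}(J)\in\II_G$ for $J\in\II_H$). Take a basic entourage $E_J^\lambda\in\E_{\II_H}$; I must produce $K\in\II_G$ with $(f\times f)^{-1}(E_J^\lambda)\subseteq E_K^\lambda$. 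If $(x,y)\in G\times G$ satisfies $(f(x),f(y))\in E_J^\lambda$, then $f(y)\in f(x)J$, so $f(x)^{-1}f(y)\in J$; using Remark \ref{rem:computations_quasi_homo} to compare $f(x)^{-1}f(y)$ with $f(x^{-1})f(y)$ and then with $f(x^{-1}y)$, one gets $f(x^{-1}y)\in J'$ for a single $J'\in\II_H$ depending only on $J$ and $M$ — here the bounded errors from the quasi-homomorphism identity appear on the \emph{left} of $f(x^{-1})$ and on the \emph{right}; to collect them all on one side and stay inside $\II_H$ one conjugates, which is precisely where $M^H\in\II_H$ (uniform bilaterality) is invoked, exactly as in the proof of Proposition \ref{prop:close_quasi_homo} via \eqref{eq:unif_bilateral}. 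Then $x^{-1}y\in f^{-1}(J')=:K\in\II_G$ by properness, so $y\in xK$, i.e. $(x,y)\in E_K^\lambda$. This shows $(f\times f)^{-1}(E_J^\lambda)\subseteq E_K^\lambda\in\E_{\II_G}$, so $f$ is effectively proper.

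The main obstacle is bookkeeping in item (ii): the quasi-homomorphism defect $M$ enters on both sides when one tries to rewrite $f(x)^{-1}f(y)$ as something like $f(x^{-1}y)$ up to bounded error, and without uniform bilaterality of $\II_H$ there is no guarantee that the resulting two-sided bounded perturbation of $f(x^{-1}y)$ stays inside a \emph{single} group ideal element independent of $x,y$. The fix is the identity \eqref{eq:unif_bilateral}, $Kg\subseteq gK^G$, which moves a right factor across to the left at the cost of replacing $K$ by $K^G\in\II_H$; since this can be done uniformly (the bound $K^G$ does not depend on $g$), all errors collapse into one element of $\II_H$, and properness of $f$ finishes the argument. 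The rest is routine manipulation of basic entourages $E_K^\lambda$ and their sections.
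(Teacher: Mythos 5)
Your proposal is correct and follows essentially the same route as the paper: for (i) the identical computation $f(gk)\in f(g)f(k)M\subseteq f(g)f(K)M$, and for (ii) the identical chain $f(x^{-1}y)\in f(x^{-1})f(y)M\subseteq f(x)^{-1}Mf(y)M\subseteq f(x)^{-1}f(y)M^{f(y)}M\subseteq KM^HM$, with uniform bilaterality used exactly where you say, followed by properness. The only cosmetic blemish is your formula for the ``only if'' direction of (i), where the set $\bigl((f\times f)(E_I^\lambda)\bigr)[f(e_G)]$ only contains $f(I)$ rather than equals it and the factor $f(e_G)^{-1}$ is misplaced; the intended (and correct) point is simply that bornologousness gives $f(I)\subseteq f(e_G)L\subseteq ML\in\II_H$, a direction the paper dismisses as trivial.
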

\begin{proof}
Both ``only if'' implications are trivial. Suppose that $f$ is an $M$-quasi-homomorphism for some $M=M^{-1}\in\II_H$.

(i,$\gets$) Let $K\in\II_G$ and take an arbitrary element $(x,xk)\in E_K$, where $x\in G$ and $k\in K$. Then
$$
f(xk)\in f(x)f(k)M\subseteq f(x)f(K)M,
$$
which implies that $(f(x),f(xk))\in E_{f(K)M}$. Thus $(f\times f)(E_K)\subseteq E_{f(K)M}\in\E_{\II_H}$ and so $f$ is bornologous.

(ii,$\gets$) Let $K\in\II_H$. Then, for every $(x,y)\in (f\times f)^{-1}(E_K)$, there exists $k\in K$ such that $f(y)=f(x)k$, and thus $f(x)^{-1}f(y)=k\in K$. We have
$$
f(x^{-1}y)\in f(x^{-1})f(y)M\subseteq f(x)^{-1}Mf(y)M\subseteq f(x)^{-1}f(y)M^{f(y)}M\subseteq KM^HM\in\II_H.
$$
Finally, $x^{-1}y\in f^{-1}(KM^HM)\in\II_G$ and $(x,y)=(x,xx^{-1}y)\in E_{f^{-1}(KM^HM)}$, which finishes the proof.
\end{proof}

\begin{theorem}\label{theo:quasi_homo_cinverse}
Let $f\colon(G,\E_G)\to(H,\E_H)$ be a quasi-homomorphism between coarse groups which is a coarse equivalence with coarse inverse $g\colon H\to G$. If $\E_H$ is uniformly invariant, then $g$ is a quasi-homomorphism.
\end{theorem}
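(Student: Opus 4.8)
The plan is to show directly that $g\colon H\to G$ satisfies the defining inequality of a quasi-homomorphism, using that $g$ is a coarse inverse of the quasi-homomorphism $f$ together with the uniform invariance of $\E_H$. First I would fix data: let $M\in\II_H$ (with $M=M^{-1}$ and the normalisations of Remark \ref{rem:computations_quasi_homo}) witness that $f$ is an $M$-quasi-homomorphism, let $N\in\II_G$ witness $g\circ f\sim_N id_G$, let $P\in\II_H$ witness $f\circ g\sim_P id_H$, and record that $f$ is bornologous (being a coarse equivalence), so $f(L)\in\II_H$ for every $L\in\II_G$. The key idea is to feed the element $g(xy)g(y)^{-1}g(x)^{-1}$ through $f$: evaluate $f(g(xy)\,g(y)^{-1}\,g(x)^{-1})$ and, on one hand, simplify it using the quasi-homomorphism property of $f$ and $f\circ g\sim_P id_H$ to land close to $xy\,y^{-1}x^{-1}=e_H$; on the other hand this element lies in $f$ of something bounded, so $g(xy)g(y)^{-1}g(x)^{-1}$ itself must be in a fixed bounded set of $G$, which is exactly what ``$g$ is an $L$-quasi-homomorphism'' means (after conjugating to the right-hand normal form $g(xy)\in g(x)g(y)L$).

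More precisely, the main computation runs as follows. Using that $f$ is an $M$-quasi-homomorphism repeatedly,
\[
f\bigl(g(xy)\,g(y)^{-1}\,g(x)^{-1}\bigr)\in f(g(xy))\,f(g(y)^{-1})\,f(g(x)^{-1})\,M^{3}
\subseteq f(g(xy))\,f(g(y))^{-1}f(g(x))^{-1}M^{9},
\]
where I have absorbed the factors coming from $f(z^{-1})\in f(z)^{-1}M$ into the powers of $M$. Now $f\circ g\sim_P id_H$ gives $f(g(w))\in wP$ for every $w\in H$, so the right side is contained in
\[
(xy)P\,(yP)^{-1}(xP)^{-1}M^{9}\subseteq (xy)\,y^{-1}\,x^{-1}\,P^{H}P P^{H}P M^{9}=P^{H}PP^{H}PM^{9}=:Q\in\II_H,
\]
using uniform invariance of $\E_H$ (i.e.\ $P^{H}\in\II_H$) to commute the $P$'s past $x,y$ as in \eqref{eq:unif_bilateral}. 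Hence $g(xy)g(y)^{-1}g(x)^{-1}\in f^{-1}(Q)$. The final step is to see $f^{-1}(Q)\in\II_G$: since $f$ is a coarse equivalence it is proper (Corollary \ref{coro:homo_ce}-type reasoning, or directly item (ix,b) of Definition \ref{def:Mor}), hence $f^{-1}(Q)$ is bounded, i.e.\ lies in some $L\in\II_G$. Enlarging $L$ by symmetrising and adjusting to the right-multiplication normal form (as in Remark \ref{rem:computations_quasi_homo}), we conclude $g(xy)\in g(x)g(y)L$ for all $x,y$, so $g$ is an $L$-quasi-homomorphism.

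The step I expect to be the main obstacle is the bookkeeping that lets one pass $f$ across the product $g(xy)g(y)^{-1}g(x)^{-1}$ without losing control: $f$ being only a quasi-homomorphism means each application of $f$ to a product introduces an error term, and one must be careful that these errors (and the errors from $f(z^{-1})$) remain in $\II_H$ and, crucially, that the error terms which end up on the ``wrong side'' of $x$ or $y$ can be conjugated back using uniform invariance — this is the only place the hypothesis on $\E_H$ is used, and it must be invoked correctly, since $\II_H$ need not be uniformly bilateral without it. A secondary point requiring attention is that $f^{-1}(Q)$ being bounded only gives membership in \emph{some} bounded set depending a priori on nothing problematic (it does not depend on $x,y$, since $Q$ does not), so uniformity of the bound is automatic; one just has to state this cleanly. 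Everything else is the routine ``assume $M$ symmetric and normalised'' manipulation already codified in Remark \ref{rem:computations_quasi_homo}.
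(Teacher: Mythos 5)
Your strategy is essentially the paper's: transport the defect of $g$ into $H$ via $f$, bound it there using the quasi-homomorphism property of $f$, the closeness $f\circ g\sim id_H$, and the uniform invariance of $\E_H$ (equivalently, the bornologousness of the multiplication of $H$), and then pull back through $f$ by effective properness. The paper phrases this with entourages --- it writes $(f(g(xy)),f(g(x)g(y)))$ as a composite $S\circ(\mu\times\mu)(S,S)\circ F$ and sets $E:=(f\times f)^{-1}(\cdots)$ --- while you phrase it multiplicatively; your bookkeeping with powers of $M$ is loose (the absorbed factors are really of the form $M^H$ rather than $M$), but you flag correctly where uniform invariance of $\E_H$ enters, so that part is acceptable.

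There is, however, one genuine slip at the end. You bound the element $g(xy)\,g(y)^{-1}g(x)^{-1}$, which yields $g(xy)\in L\,g(x)g(y)$, i.e.\ closeness of $(x,y)\mapsto g(xy)$ and $(x,y)\mapsto g(x)g(y)$ with respect to the \emph{right} $\II_G$-coarse structure. The definition of quasi-homomorphism into the left coarse group $(G,\E_{\II_G}^\lambda)$ requires $g(xy)\in g(x)g(y)L'$ with $L'\in\II_G$, i.e.\ the relevant defect is $(g(x)g(y))^{-1}g(xy)$. Passing from your form to this one needs $L^{G}\in\II_G$, that is, uniform invariance of $\E_G$ --- which is not among the hypotheses (it does follow from them, by the independent argument of Theorem \ref{theo:unif_invariant_c_invariant}(i), but you neither prove nor cite this, and Remark \ref{rem:computations_quasi_homo} does not supply it: it only relates $f(z^{-1})$ to $f(z)^{-1}$). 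The fix is cheap: run the same computation on $g(y)^{-1}g(x)^{-1}g(xy)$ instead, or, as the paper does, work with the pair $(g(x)g(y),g(xy))$ and apply $(f\times f)^{-1}$ to a fixed entourage of $\E_H$; then the right-multiplication normal form drops out directly.
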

\begin{proof}
Let $F\in\E_H$ be a symmetric entourage such that $f$ is an $F$-quasi-homomorphism. We claim that there exists $E\in\E_G$ such that, for every $x,y\in H$, $(g(xy),g(x)g(y))\in E$. Let $x,y\in H$. Then, since $\mu\colon H\times H\to H$ is bornologous (Proposition \ref{prop:coarseuniformoperation}),
$$
\begin{aligned}
(f(g(xy)),f(g(x)g(y)))&\,=(f(g(xy)),xy)\circ(xy,f(g(x))f(g(y)))\circ(f(g(x))f(g(y)),f(g(x)g(y)))\in\\
&\,\in S\circ(\mu\times\mu)(S,S)\circ F\in\E_H,
\end{aligned}
$$
where $S=\{(f(g(z)),z),(z,f(g(z)))\mid z\in H\}\in\E_H$, and thus it suffices to define $E:=(f\times f)^{-1}(S\circ(\mu\times\mu)(S,S)\circ F)$.
\end{proof}

\begin{theorem}\label{theo:unif_invariant_c_invariant}
Let $f\colon(G,\E_G)\to(H,\E_H)$ be a quasi-homomorphism between coarse groups which is a coarse equivalence. Then:
\begin{compactenum}[(i)]
\item if $\E_H$ is uniformly invariant, then $\E_G$ is uniformly invariant;
\item if a coarse inverse of $f$ is a quasi-homomorphism, then $\E_G$ is uniformly invariant if and only if $\E_H$ is uniformly invariant.
\end{compactenum}
\end{theorem}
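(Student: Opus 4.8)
The plan is to reduce the whole statement to part (i), i.e.\ to the implication ``$\E_H$ uniformly invariant $\Rightarrow$ $\E_G$ uniformly invariant'' for a quasi-homomorphism $f$ which is a coarse equivalence. Indeed, in (ii) the direction ``$\E_H$ uniformly invariant $\Rightarrow$ $\E_G$ uniformly invariant'' is exactly (i); for the converse, a coarse inverse $g\colon H\to G$ of $f$ is again a coarse equivalence (with coarse inverse $f$) and, by the hypothesis of (ii), a quasi-homomorphism, so (i) applied to $g$ (with $\E_G$ now playing the role of ``$\E_H$'') yields ``$\E_G$ uniformly invariant $\Rightarrow$ $\E_H$ uniformly invariant''.

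For (i) I would first rephrase uniform invariance through the multiplication map. Since $(G,\E_G)$ and $(H,\E_H)$ are left coarse groups, Proposition \ref{prop:compatible} presents them as left $\II$-group coarse structures, so Proposition \ref{prop:coarseuniformoperation} is available: $\E_H$ being uniformly invariant is equivalent to $\mu_H\colon(H\times H,\E_H\times\E_H)\to(H,\E_H)$ being bornologous, and it suffices to prove that $\mu_G$ is bornologous in order to conclude that $\E_G$ is uniformly invariant. Two facts about $f$ enter: as a coarse equivalence it is bornologous and \emph{effectively proper} (Definition \ref{def:Mor}(ix,b)); as a quasi-homomorphism the set $F_0:=\{(f(ab),f(a)f(b))\mid a,b\in G\}$ lies in $\E_H$.

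The core is then a short entourage computation. A base of $\E_G\times\E_G$ consists of the standard entourages determined by pairs $E_1,E_2\in\E_G$, so fix such $E_1,E_2$ and take $(x_1,y_1)\in E_1$, $(x_2,y_2)\in E_2$. Bornologousness of $f$ gives $(f\times f)(E_i)\in\E_H$, and bornologousness of $\mu_H$ then produces an entourage $F_1\in\E_H$ containing $(f(x_1)f(x_2),f(y_1)f(y_2))$. Using the two instances $(f(x_1x_2),f(x_1)f(x_2))\in F_0$ and $(f(y_1y_2),f(y_1)f(y_2))\in F_0$, I get $(f(x_1x_2),f(y_1y_2))\in F_0\circ F_1\circ F_0^{-1}\in\E_H$. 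Hence $(x_1x_2,y_1y_2)\in (f\times f)^{-1}(F_0\circ F_1\circ F_0^{-1})$, which lies in $\E_G$ because $f$ is effectively proper; as $(x_1,y_1),(x_2,y_2)$ were arbitrary, $(\mu_G\times\mu_G)$ of the chosen base entourage is contained in this single element of $\E_G$, so $\mu_G$ is bornologous and (i) follows.

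I do not expect a genuine obstacle; the point needing care is the last step, where the pull-back $(f\times f)^{-1}(\cdot)$ is legitimate precisely because a coarse equivalence is effectively proper (mere properness would not suffice). It is also worth stressing that (i) uses only $f$ and not its coarse inverse — the extra assumption in (ii) serves solely to feed $g$ back into (i). As an alternative, one could run the same argument with the inverse map $\iota$ in place of $\mu$ in Proposition \ref{prop:coarseuniformoperation}, replacing $F_0$ by the set $\{(f(z^{-1}),f(z)^{-1})\mid z\in G\}$, which is an entourage of $\E_H$ by Remark \ref{rem:computations_quasi_homo}; the computation is entirely parallel.
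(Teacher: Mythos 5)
Your proposal is correct and follows essentially the same route as the paper's proof: reduce (ii) to (i) by applying (i) to the coarse inverse, then show $\mu_G$ is bornologous by sandwiching $(f(x_1x_2),f(y_1y_2))$ between two quasi-homomorphism entourages and the image of $\mu_H$ applied to $(f\times f)(E_1),(f\times f)(E_2)$, and pull back via effective properness before invoking Proposition \ref{prop:coarseuniformoperation}. The only cosmetic difference is that the paper chooses the quasi-homomorphism entourage $F$ symmetric at the outset instead of writing $F_0\circ F_1\circ F_0^{-1}$.
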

\begin{proof}
Item (ii) follows from item (i). Assume that $\E_H$ is uniformly invariant and let $F\in\E_H$ be a symmetric entourage such that $f$ is a $F$-quasi-homomorphism. Let $E,E^\prime\in\E_G$. Then, for every $(x,y)\in E$ and $(z,w)\in E^\prime$,
$$
(f(xz),f(yw))=(f(xz),f(x)f(z))\circ(f(x)f(z),f(y)f(w))\circ(f(y)f(w),f(yw))\in E^{\prime\prime}\in\E_H,
$$
where $E^{\prime\prime}=F\circ(\mu((f\times f)(E),(f\times f)(E^\prime)))\circ F\in\E_N$, since $f$ is bornologous. Finally, since $f$ is effectively proper, $(\mu\times\mu)(E,E^\prime)\subseteq(f\times f)^{-1}(E^{\prime\prime})\in\E_G$, and thus Proposition \ref{prop:coarseuniformoperation} implies the claim.
\end{proof}

Note that the quasi-homomorphisms defined in Example \ref{ex:no_hom_close} are coarse inverses of the inclusions $i\colon\Z\to\R$ and $i\colon 2\Z\to\Z$, which are homomorphisms and coarse equivalences. Thus these two inclusions have no coarse inverses which are homomorphisms.

In Theorem \ref{theo:quasi_homo_cinverse}, the request of uniformly invariance of $\E_H$ is quite restrictive. In fact, we cannot apply the result to a coarse group $(H,\E_H)$ whose points have unbounded orbits under conjugacy. There is a trade-off between the uniformly invariance and the surjectivity of the map, as Corollary \ref{coro:c_inverse_surj} shows.

\begin{lemma}\label{lemma:coarse_inverse}
Let $(G,\E_G)$ and $(H,\E_H)$ be two coarse groups, $E\in\E_H$ be a symmetric entourage, $f\colon G\to H$ be a surjective $E$-quasi-homomorphism, and $s\colon H\to G$ be one of its sections (i.e., $f\circ s=id_H$). Suppose that $(f\times f)^{-1}(E)\in\E_G$. Then $s$ is a quasi-homomorphism.
\end{lemma}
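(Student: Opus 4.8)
The plan is to show that $s$ is a $D$-quasi-homomorphism for a suitable $D\in\E_G$, by transporting the defining inequality for $f$ through the section $s$. First I would record the basic interplay between $f$ and $s$: since $f\circ s=id_H$, for every $x\in H$ we have $f(s(x))=x$, and in particular $s$ is injective. The key observation is that if I apply $f$ to the pair $(s(xy),\,s(x)s(y))$ I land very close to the diagonal of $H$, so if I can pull this back along $f$ (which is possible precisely because $(f\times f)^{-1}(E)\in\E_G$, together with the fact that an $E$-quasi-homomorphism controls $f$ on products), I obtain an entourage of $\E_G$ containing the pair $(s(xy),s(x)s(y))$ for all $x,y$.

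Concretely, fix $x,y\in H$. Using that $f$ is a bornologous — hence, by Proposition \ref{prop:compatible*}, left-shift-uniform — or more directly using the $E$-quasi-homomorphism property with arguments $s(x)$ and $s(y)$, I would compute
$$
\bigl(f(s(xy)),\,f(s(x)s(y))\bigr)=\bigl(f(s(xy)),\,f(s(x))f(s(y))\bigr)^{-1}\circ\bigl(f(s(x))f(s(y)),\,f(s(x)\cdot s(y))\bigr),
$$
where the first factor is $(xy,xy)\in\Delta_H$ by $f\circ s=id_H$, and the second factor lies in $E^{-1}=E$ by the $E$-quasi-homomorphism property applied to the pair $(s(x),s(y))$. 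Hence $\bigl(f(s(xy)),\,f(s(x)s(y))\bigr)\in E$ for all $x,y\in H$, i.e. the set $\{(f(s(xy)),f(s(x)s(y)))\mid x,y\in H\}$ is contained in $E$. Applying $(f\times f)^{-1}$ and invoking the hypothesis $(f\times f)^{-1}(E)\in\E_G$, I conclude that
$$
\{(s(xy),\,s(x)s(y))\mid x,y\in H\}\subseteq (f\times f)^{-1}(E)\in\E_G,
$$
so $s$ is a $\bigl((f\times f)^{-1}(E)\bigr)$-quasi-homomorphism, which completes the proof.

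The step I expect to require the most care is the middle one: verifying that the composition-of-entourages expression is legitimate and that the second factor genuinely lands in $E$ rather than in some iterate of $E$ — this is where the $E$-quasi-homomorphism property (as opposed to the mere existence of some controlling entourage) is used, and where symmetry of $E$ is needed so that $E^{-1}=E$. A mild subtlety is that $(f\times f)^{-1}(E)$ only controls pairs whose images under $f$ lie in $E$; since we have shown exactly that, the pullback applies cleanly, but one must be careful that $s(xy)$ and $s(x)s(y)$ need not be in any particular fiber — only their $f$-images need be $E$-related, which is precisely what was established. No use of uniform invariance or surjectivity-beyond-having-a-section is needed, in contrast with Theorem \ref{theo:quasi_homo_cinverse}; this is the trade-off alluded to before the statement.
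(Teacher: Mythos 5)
Your proposal is correct and follows essentially the same route as the paper: decompose $(f(s(xy)),f(s(x)s(y)))$ through the intermediate point $f(s(x))f(s(y))=xy$, observe the first factor is diagonal and the second lies in $E$ (using symmetry of $E$), and pull back along $(f\times f)^{-1}$ to land in the entourage $(f\times f)^{-1}(E)\in\E_G$. The superscript $^{-1}$ on the first factor in your display is harmless (that pair is $(xy,xy)$, its own inverse), and your closing remarks about where symmetry and the pullback hypothesis enter match the paper's (more terse) argument exactly.
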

\begin{proof}
Let $x,y\in H$. Since $f\circ s=id_H$,
$$
(f(s(xy)),f(s(x)s(y)))=(f(s(xy)),f(s(x))f(s(y)))\circ(f(s(x))f(s(y)),f(s(x)s(y)))\in\Delta_H\circ E=E,
$$
and thus $(s(xy),s(x)s(y))\in(f\times f)^{-1}(E)\in\E_G$.
\end{proof}

\begin{corollario}\label{coro:c_inverse_surj}
Let $f\colon(G,\E_G)\to(H,\E_H)$ be a surjective quasi-homomorphism, which is a coarse equivalence. Then there exists a coarse inverse of $f$ which is a quasi-homomorphism. In particular the inverse of a quasi-homomorphism which is an asymorphism, is a quasi-homomorphism.
\end{corollario}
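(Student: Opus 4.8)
The strategy is to obtain the coarse inverse directly from Lemma \ref{lemma:coarse_inverse} and then verify that the section produced there is genuinely a coarse inverse of $f$; the fact that this inverse is a quasi-homomorphism comes for free from the lemma, so the only real content is that \emph{every} section of $f$ is well-behaved once $f$ is a coarse equivalence.

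Concretely, I would first use surjectivity of $f$ to pick (via the axiom of choice) a section $s\colon H\to G$, so that $f\circ s=id_H$. Since $f$ is a quasi-homomorphism, the set $\{(f(xy),f(x)f(y))\mid x,y\in G\}$ lies in $\E_H$, and adjoining its inverse and the diagonal produces a symmetric entourage $E\in\E_H$ witnessing that $f$ is an $E$-quasi-homomorphism. Because $f$ is a coarse equivalence it is effectively proper (Definition \ref{def:Mor}(ix)), hence $(f\times f)^{-1}(E)\in\E_G$. Lemma \ref{lemma:coarse_inverse} then applies and tells us that $s$ is a quasi-homomorphism.

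It remains to check that $s$ is a coarse inverse of $f$. The relation $f\circ s=id_H$ is an equality, so in particular $f\circ s\sim id_H$. For bornologousness: given $F\in\E_H$ and $(x,y)\in F$ we have $(f(s(x)),f(s(y)))=(x,y)\in F$, so $(s(x),s(y))\in(f\times f)^{-1}(F)$; thus $(s\times s)(F)\subseteq(f\times f)^{-1}(F)\in\E_G$, again by effective properness. Finally, for every $z\in G$ one has $f(s(f(z)))=f(z)$, so $(z,s(f(z)))\in(f\times f)^{-1}(\Delta_H)$; since $f$ is a coarse equivalence it is large-scale injective (Corollary \ref{coro:ls_bijective_ce}), whence $\{(z,s(f(z)))\mid z\in G\}\subseteq(f\times f)^{-1}(\Delta_H)\in\E_G$, i.e.\ $s\circ f\sim id_G$. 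Thus $s$ is a coarse inverse of $f$ that is a quasi-homomorphism. For the last assertion, an asymorphism that is also a quasi-homomorphism is in particular a surjective quasi-homomorphism which is a coarse equivalence, and its set-theoretic inverse is its unique section, which by the above is a quasi-homomorphism.

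I expect no serious obstacle here: the statement is essentially a repackaging of Lemma \ref{lemma:coarse_inverse}. The one point deserving attention is that surjectivity alone does not force a section to be bornologous or close to the identity after composition; it is the coarse-equivalence hypothesis --- through effective properness and large-scale injectivity, i.e.\ Corollary \ref{coro:ls_bijective_ce} --- that promotes an arbitrary set-theoretic section to a bona fide coarse inverse.
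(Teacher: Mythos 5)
Your proof is correct and follows essentially the same route as the paper: effective properness of the coarse equivalence supplies the hypothesis $(f\times f)^{-1}(E)\in\E_G$ of Lemma \ref{lemma:coarse_inverse}, which then yields that any section is a quasi-homomorphism. The only difference is that you explicitly verify that a set-theoretic section of a surjective coarse equivalence is indeed a coarse inverse (bornologous and close to the identity on both sides), a point the paper's proof asserts without detail; your verification is accurate.
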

\begin{proof}
Since $f$ is effectively proper, the conditions of Lemma \ref{lemma:coarse_inverse} are fulfilled and thus every section, which is a coarse inverse of $f$, is a quasi-homomorphism. The second statement trivially follows.
\end{proof}

\begin{osservazione}
Let $f\colon G\to(H,\E_\II)$ be a surjective $E_M$-quasi-homomorphism, for some $M\in\II$, between and abelian group $G$ and a coarse group $(H,\II)$. Then, for every $g,h\in G$,
\begin{equation}\label{eq:rem_surj_q_homo}
f(g)f(h)\in f(g+h)M=f(h+g)M\subseteq f(h) f(g)MM.
\end{equation}
In particular \eqref{eq:rem_surj_q_homo} shows that, for every $k,l\in H$, $[k,l]=k^{-1}l^{-1}kl\in MM$ and so the derived subgroup $[H,H]$ is contained in the subgroup $\langle M\rangle$ generated by $M$. If $\langle M\rangle\in\II$, then $H$ is coarsely equivalent to the abelian coarse group $(H/[H,H],\E_{q(\II)})$ since $q\colon H\to H/[H,H]$ is a coarse equivalence by Proposition \ref{prop:quotient_ce}.
\end{osservazione}

\section{Categories of coarse groups, functorial coarse structures and localisation}\label{sec:cat_coarse_grps}

The aim of this section is to discuss a categorical treatment of coarse groups. Recall that $\CS$ is the category of coarse spaces and bornologous maps between them. We now introduce a list of categories of coarse groups.
\begin{compactenum}[$\bullet$]
\item The category $\lCGQ$ ($\rCGQ$) has left coarse groups as objects (right coarse groups, respectively), and bornologous quasi-homomorphisms as morphisms.
\item The category $\CGQ$ is the intersection of $\lCGQ$ and $\rCGQ$, i.e., its objects are coarse groups whose coarse structures are uniformly invariant, and its morphisms are bornologous quasi-homomorphisms (according to Proposition \ref{prop:coarseuniformoperation}).
\item The category $\lCG$ ($\rCG$) has left coarse groups as objects (right coarse groups, respectively), and bornologous homomorphisms as morphisms.
\item The category $\CG$ is the intersection of $\lCG$ and $\rCG$, i.e., its objects are coarse groups whose coarse structures are uniformly invariant, and its morphisms are bornologous homomorphisms.
\item For any infinite cardinal $\kappa$, the subcategory $\kappaCGQ$ ($\kappaCG$, $\lkappaCG$, $\rkappaCG$) of $\CGQ$ (of $\CG$, $\lCG$, $\rCG$, respectively) whose objects are groups endowed with $\kappa$-group coarse structures.
\end{compactenum}
Thanks to Proposition \ref{prop:comp_quasi-homo}, composites of bornologous quasi-homomorphisms are still quasi-homomorphisms, and thus the categories whose morphisms are bornologous quasi-homomorphisms are indeed categories.

In diagram \eqref{forgetful}, we enlist the categories of coarse groups just defined, where the arrows represent forgetful functors. For the sake of simplicity, we do not include the categories $\rCGQ$, $\rCG$, and $\rkappaCG$.
\begin{equation}\label{forgetful}
\xymatrix{
	&\CS &\\
	&\lCGQ\ar[u] &\\
	\lCG\ar[ur] & &\CGQ\ar[ul]\\
	\lkappaCG\ar[u] &\CG\ar[ur]\ar[ul] &\kappaCGQ\ar[u]\\
	&\kappaCG\ar[ur]\ar[u]\ar[ul]&.
}
\end{equation}

Let $\XX$ be a category and $\sim$ be a {\em congruence} on $\XX$, i.e., for every $X,Y\in\XX$, $\sim$ is an equivalence relation in $\Mor_\XX(X,Y)$ such that, for every $f,g\in\Mor_\XX(X,Y)$ and $h,k\in\Mor_\XX(Y,Z)$, $h\circ f\sim k\circ g$, whenever $f\sim g$ and $h\sim k$. Hence the {\em quotient category $\XX/_\sim$} can be defined as the one whose objects are the same of $\XX$ and whose morphisms are equivalence classes of morphisms of $\XX$, i.e., $\Mor_{\XX/_{\sim}}(X,Y)=\{[f]_\sim\mid f\in\Mor_{\XX}(X,Y)\}$, for every $X,Y\in\XX/_\sim$. For example, the closeness relation $\sim$ is a congruence in $\CS$ and so the quotient category $\CSsim$ can be defined (\cite{DikZa_cat}). The isomorphisms of $\CSsim$ are precisely equivalence classes of coarse equivalences, whose inverses are equivalence classes of their coarse inverses. 

We will be interested in other quotient categories, namely $\CGQsim$, $\kappaCGQsim$, $\CGsim$, and $\kappaCGsim$, for every infinite cardinal $\kappa$ (see diagram \eqref{forgetful_sim}).
\begin{equation}\label{forgetful_sim}
\xymatrix{
	&\CSsim & \\
	&\CGQsim\ar[u] &\\
	\CGsim\ar[ur] & &\kappaCGQsim\ar[ul]\\
	&\kappaCGsim\ar[ul]\ar[ur] &
}
\end{equation}

Let us enlist some considerations on the previously defined categories, discussing the consequences of some results we proved in this setting.
\begin{remark}\label{rem:CGsim}
\begin{compactenum}[(i)]
\item The second assertion of Corollary \ref{coro:c_inverse_surj} implies that, if $X,Y\in\lCGQ$ ($X,Y\in\rCGQ$) and $f\colon X\to Y$ is a morphism in $\lCGQ$ ($\rCGQ$) such that $\Ufun f\colon\Ufun X\to\Ufun Y$ is an isomorphism of $\CS$, then $f$ is an isomorphism of $\lCGQ$ ($\rCGQ$, respectively).
\item Let $f\colon X\to Y$ be a morphism in $\CGQ$. Proposition \ref{prop:close_quasi_homo} implies that, for every other morphism $g\colon X\to Y$ in $\CS$ such that $g\sim f$, $g$ can be seen as a morphism of $\CGQ$. Thus the equivalence class of $f$ under closeness relation in $\CS$ is equal to the one in $\CGQ$.
\item Theorem \ref{theo:quasi_homo_cinverse} implies that, if $X,Y\in\CGQsim$ and $f\colon X\to Y$ is a morphism in $\CGQsim$ such that $\Ufun f\colon\Ufun X\to\Ufun Y$ is an isomorphism of $\CSsim$, then $f$ is an isomorphism of $\CGQsim$. Note that we cannot replace the category $\CGQsim$ with $\CGsim$, in fact there are homomorphisms which are coarse equivalences, but they have no coarse inverses which are homomorphisms (Example \ref{ex:no_hom_close}).
\end{compactenum}
\end{remark}

\subsection{Functorial coarse group structures}\label{sub:funct}
As announced in the Introduction, now that we have defined categories of coarse groups, we can introduce functorial coarse structures. A {\em functorial coarse structure} of groups is a concrete functor $\Ffun\colon{\bf Grp}\to\lCG$, where concrete means that 
$\Ufun\circ\Ffun$ is the identity functor, where $\Ufun\colon\lCG\to{\bf Grp}$ is the forgetful functor. A functorial coarse structure is called {\em \perf}, if for every morphism $f\colon G \to H$ in $\mathbf{Grp}$, the morphism $\Ffun(f)$ is uniformly bounded copreserving. In a similar (but appropriately modified) way 
we can define functorial coarse structures on topological groups, as functors $\Gfun\colon{\bf TopGrp}\to\lCG$.


\begin{osservazione}
Perfect functorial coarse structures $\Ffun\colon{\bf TopGrp}\to\lCG$ have another remarkable property, namely, for every surjective homomorphism $f$, $\Ffun(f)$ is a quotient in $\CS$ (and thus in $\lCG$). According to Propositions \ref{prop:homomorphism_group_ideal} and \ref{prop:homo_unif_bounded_copres}, an homomorphism $f\colon (G,\E_{\II_G})\to(H,\E_{\II_H})$ is both bornologous and uniformly bounded copreserving if and only if $f(\II_G)=\II_H\cap\mathcal P(f(G))$. Then, if $f$ is surjective, $f\colon(G,\E_{\II_G})\to(H,\E_{\II_H})\cong(G/\ker f,\E_{q(\II_G)})$ is a quotient also in the category $\CS$ (and thus in $\lCG$), as it is showed in \cite[Proposition 6.5]{DikZa_cat}.
\end{osservazione}

One can show that perfect functorial coarse structures on the category ${\bf Grp}$ of abstract abelian groups 
are completely determined by their ``values" on free groups $F_\kappa$ of $\kappa$ generators, where $\kappa$ is an arbitrary cardinal.  

\begin{proposition}\label{prop:funct_perf_on_free} 
Assume that a group ideal $\II_\kappa$ is assigned to each $F_\kappa$ in such a way that every 
homomorphism $f\colon F_\kappa \to F_\mu$ is bornologous and uniformly bounded copreserving when $\kappa$ and $\mu$ vary arbitrarily.  
Then this assignment can be extended to a perfect functorial coarse structure on the category $\mathbf{Grp}$
assigning to every group $G$ the group ideal $\II_G:= f(\II_\kappa)$, provided $q\colon F_\kappa \to G$ is a surjective homomorphism. 
\end{proposition}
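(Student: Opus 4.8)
The plan is to take the recipe in the statement at face value and verify, in turn, that $\II_G$ is well defined, that $\Ffun$ is a concrete functor $\mathbf{Grp}\to\lCG$, and that it is perfect. Beyond the hypothesis, the two structural facts about free groups I would use are that they are projective objects of $\mathbf{Grp}$ and that their subgroups are again free (Nielsen--Schreier).

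\emph{Well-definedness of $\II_G$.} Every group $G$ is a quotient of a free group, so fix a surjective homomorphism $q\colon F_\kappa\to G$ and set $\II_G:=q(\II_\kappa)$, which is a group ideal on $G$ because the image of a group ideal under a homomorphism is one. To see that $\II_G$ does not depend on $q$, let $q'\colon F_\mu\to G$ be another surjection. Projectivity of $F_\kappa$ and surjectivity of $q'$ give a homomorphism $h\colon F_\kappa\to F_\mu$ with $q'\circ h=q$; by hypothesis $h$ is bornologous, so $h(\II_\kappa)\subseteq\II_\mu$ by Proposition \ref{prop:homomorphism_group_ideal}(ii), whence $q(\II_\kappa)=q'(h(\II_\kappa))\subseteq q'(\II_\mu)$, and the reverse inclusion follows by symmetry. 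Since any automorphism of a free group is bornologous with bornologous inverse, $\II_\kappa$ is $\mathrm{Aut}(F_\kappa)$-invariant, so the assignment makes sense on every free group, not only on a chosen representative of each rank.

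\emph{Functoriality and concreteness.} Let $\phi\colon G\to H$ be a homomorphism and fix surjections $q_G\colon F_\kappa\to G$, $q_H\colon F_\mu\to H$. Projectivity of $F_\kappa$ yields $\psi\colon F_\kappa\to F_\mu$ with $q_H\circ\psi=\phi\circ q_G$, and $\psi$ is bornologous by hypothesis; hence for every $I=q_G(J)\in\II_G$ (with $J\in\II_\kappa$) one has $\phi(I)=q_H(\psi(J))\in q_H(\II_\mu)=\II_H$, so $\phi$ is bornologous by Proposition \ref{prop:homomorphism_group_ideal}(ii). Thus $\Ffun(\phi):=\phi\colon(G,\E_{\II_G})\to(H,\E_{\II_H})$ is a morphism of $\lCG$; preservation of identities and composites, and the identity $\Ufun\circ\Ffun=\mathrm{id}_{\mathbf{Grp}}$, are immediate, so $\Ffun$ is a functorial coarse structure.

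\emph{Perfectness.} Given a homomorphism $\phi\colon G\to H$, I would factor it canonically as $\phi=m_\phi\circ e_\phi$ with $e_\phi\colon G\to H_1:=\phi(G)$ surjective and $H_1$ carrying the subspace coarse structure $\E_{\II_H}|_{H_1}$; by the remark following Definition \ref{def:Mor}, $\phi$ is uniformly bounded copreserving iff $e_\phi$ is. Since $e_\phi\circ q_G\colon F_\kappa\to H_1$ is surjective, well-definedness identifies the functorial ideal of $H_1$ with $e_\phi(\II_G)$, so $e_\phi\colon(G,\E_{\II_G})\to(H_1,\E_{\II_{H_1}})$ is uniformly bounded copreserving by Proposition \ref{prop:quotient_ce}. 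Thus perfectness reduces to the \emph{hereditariness} of $\Ffun$: for every subgroup $H_1\le H$, $\E_{\II_H}|_{H_1}=\E_{\II_{H_1}}$, equivalently (a routine computation shows the subspace structure on a subgroup has this form) $\II_H\cap\mathcal P(H_1)=\II_{H_1}$. This identity is the crux, and the main obstacle: the naive element chase along $q_H\colon F_\mu\to H$ fails precisely because uniformly bounded copreserving maps need not compose in general. The way around it is Nielsen--Schreier: the subgroup $P:=q_H^{-1}(H_1)$ of $F_\mu$ is free, $q_H$ restricts to a surjection $P\to H_1$, so $\II_{H_1}=q_H(\II_{P})$ by well-definedness, and the inclusion $\iota\colon P\hookrightarrow F_\mu$ is a homomorphism between free groups, hence bornologous and uniformly bounded copreserving by hypothesis. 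Bornologousness of $\iota$ yields $q_H(\II_{P})\subseteq\II_H\cap\mathcal P(H_1)$; conversely, given $K\in\II_H$ with $K\subseteq H_1$, write $K=q_H(K_0)$ with $K_0\in\II_\mu$, note that $K_0\subseteq P$, and apply the uniformly bounded copreserving property of $\iota$ (in the form of Proposition \ref{prop:homo_unif_bounded_copres}) to get $K_0\in\II_{P}$, so $K\in q_H(\II_{P})=\II_{H_1}$. This establishes hereditariness and, combined with the reduction above, shows that $\Ffun$ is perfect.
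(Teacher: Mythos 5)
Your proposal is correct, and for well-definedness and functoriality it follows exactly the route the paper sketches: present every group as a quotient of a free group and lift homomorphisms along surjections using projectivity of free groups. Where you genuinely diverge is on perfectness. The paper's proof is only a sketch and asserts that once $\II_G$ is well defined the perfectness of the resulting functor ``immediately follows''; you are right that this is not immediate, because the lifted homomorphism $\tilde f\colon F_\kappa\to F_\mu$ being uniformly bounded copreserving does not directly transfer to $f$ (given $K=q_H(K_0)$ with $K_0\in\II_\mu$ and $k\in K\cap f(G)$, the chosen preimage $k_0\in K_0$ of $k$ need not lie in $\tilde f(F_\kappa)$, so $K_0\cap\tilde f(F_\kappa)$ can miss the relevant points). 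Your reduction of perfectness to hereditariness of the assignment on subgroups, i.e.\ $\II_H\cap\mathcal P(H_1)=\II_{H_1}$, and your resolution of that identity via Nielsen--Schreier (so that $P=q_H^{-1}(H_1)$ is again free, the inclusion $P\hookrightarrow F_\mu$ falls under the hypothesis, and any $K_0\in\II_\mu$ with $q_H(K_0)\subseteq H_1$ automatically satisfies $K_0\subseteq P$) is a clean and complete argument that the paper does not supply. The only point worth making explicit is the one you already gesture at: the hypothesis assigns $\II_\kappa$ to a chosen $F_\kappa$ of each rank, so transporting it to the subgroup $P\cong F_\nu$ uses the $\mathrm{Aut}(F_\nu)$-invariance of $\II_\nu$, which indeed follows from the hypothesis applied to automorphisms and their inverses. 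In short, your proof buys a rigorous verification of perfectness where the paper offers only an assertion, at the cost of invoking the Nielsen--Schreier theorem (and its analogue for free abelian groups in the ${\bf AbGrp}$ variant).
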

\begin{proof}
(a Sketch of a proof) Use the properties of  $\II_\kappa$ in the hypotheses to show that:
\begin{compactenum}[(a)]
\item $\II_G$ is correctly defined (in particular, does not depend on the choice of $q$);
\item $G\mapsto (G, \II_G)$ is a perfect functorial coarse structure. 
\end{compactenum}
 It is enough to prove (a) since (b) will immediately follows. One can use the following two facts. First of all, every group is a quotient of some free group, so that every group can be endowed with a group ideal. Moreover, for every homomorphism $f\colon G \to H$ in $\mathbf{Grp}$ (including $id_G$) and for every pair of surjective homomorphisms $q\colon F_\kappa \to G$ and $q'\colon F_\mu \to H$ there is a lifting $\tilde f\colon F_\kappa \to F_\mu$ such that the following diagram commutes
$$
\begin{CD}
F_\kappa @>\tilde f>> F_\mu \\
@VV{q}V @VV{q^\prime}V \\
{G} @>f>> {H}.\\
\end{CD}
$$
\end{proof}

A similar result can be shown for the category ${\bf AbGrp}$ where the perfect functorial coarse structures are determined by their ``values'' on the free abelian groups $A_\kappa=\bigoplus_\kappa\Z$.

One can take as a useful application of Proposition \ref{prop:funct_perf_on_free} the case of functorial coarse structures on the class of all groups of size at most $\kappa$, where $\kappa$ is a fixed cardinal. In that case, every group $G$ with $\lvert G\rvert\leq\kappa$ is a quotient of the free group $F_\kappa$ and thus one can define the group ideals of the whole class from its group ideals $\II_\kappa$ 
that are ``invariant'' under endomorphisms of $F_\kappa$, i.e., such that, for every endomorphism $f$, $f\colon(F_\kappa,\E_{\II_{\kappa}})\to(F_\kappa,\E_{\II_{\kappa}})$ are bornologous.

\begin{proposition}
All the group-coarse structures defined in Example \ref{ex:group_c_s} but the metric-group coarse structures are functorial. Moreover, the discrete, the bounded and the $\kappa$-group coarse structures are \perf.
\end{proposition}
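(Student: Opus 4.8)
The plan is to reduce the whole statement to the two group-ideal criteria already established: Proposition \ref{prop:homomorphism_group_ideal}(ii), by which a homomorphism $f\colon G\to H$ is bornologous if and only if $f(\II_G)\subseteq\II_H$, and Proposition \ref{prop:homo_unif_bounded_copres}(iii), by which $f$ is uniformly bounded copreserving if and only if for every $K\in\II_H$ there is $L\in\II_G$ with $K\cap f(G)\subseteq f(L)$. Each coarse structure in Example \ref{ex:group_c_s} other than the metric-group one has the shape $\E_{\II_G}$ for a group ideal $\II_G$ canonically attached to $G$ (to the underlying group in items (i)--(iv), to the topological group in items (v), (vii), (viii)); hence $G\mapsto(G,\E_{\II_G})$, $f\mapsto f$, is automatically a concrete assignment, it is a functor as soon as each $f$ is bornologous, and it is \perf\ as soon as each $f$ is moreover uniformly bounded copreserving. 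The metric-group coarse structures are left out because $\E_{\BB_d}$ carries the extra datum of a left-invariant pseudo-metric $d$ on $G$, which is not determined by the group, so there is no assignment $G\mapsto\E_G$ whose functoriality could even be discussed; I would spell this out explicitly.

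For functoriality I would treat the list case by case, each time verifying $f(\II_G)\subseteq\II_H$ by a standard ``preservation under (continuous) homomorphic image'' fact. The discrete ideal $\{\{e\}\}$ and the power set $\mathcal P(G)$ are preserved trivially; for $[G]^{<\kappa}$ one uses $\lvert f(K)\rvert\le\lvert K\rvert$ (the finitary case being $\kappa=\omega$); for $\rC(G)=\cl(\mathcal C(G))$, if $A\subseteq C$ with $C$ compact then $f(A)\subseteq f(C)$ with $f(C)$ compact, so no closure subtlety arises; for $\mathcal{OB}$, if $d$ is a left-invariant continuous pseudo-metric on $H$ then $d\circ(f\times f)$ is one on $G$ (left-invariance from $f$ being a homomorphism, continuity from $f$ being continuous), whence $A\in\mathcal{OB}(G)$ forces $f(A)$ to be $d$-bounded from the identity, for every such $d$; and for $\cl(\kappa\mbox{-}\mathcal L(G))$, if $A\subseteq L$ with $L$ a $\kappa$-Lindel\"of subspace then $f(A)\subseteq f(L)$, the continuous image $f(L)$ being $\kappa$-Lindel\"of. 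In each case Proposition \ref{prop:homomorphism_group_ideal}(ii) yields bornology of $\Ffun(f)$, and $\Ufun\circ\Ffun$ is the identity by construction, so we obtain a functorial coarse structure on $\mathbf{Grp}$ in items (i)--(iv) and on $\mathbf{TopGrp}$ in items (v), (vii), (viii).

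For perfectness only the discrete, bounded and $\kappa$-group structures are claimed, and there the second criterion is immediate: for the bounded structure take $L:=f^{-1}(K)\in\mathcal P(G)$; for the discrete one $K\subseteq\{e_H\}$ and $L:=\{e_G\}$ works; for $[G]^{<\kappa}$ pick, for each of the fewer than $\kappa$ elements $y\in K\cap f(G)$, a preimage $x_y\in f^{-1}(y)$ and set $L:=\{x_y\mid y\in K\cap f(G)\}\in[G]^{<\kappa}$. I would also record that the remaining functorial structures are genuinely not \perf: for instance the identity homomorphism from $\R$ with the discrete topology to $\R$ with the euclidean topology, both carrying their compact-group coarse structures, is continuous and surjective, yet the compact segment $[0,1]$ is not contained in $f(L)$ for any finite $L$; a similar construction rules out perfectness for the left-coarse and $\kappa$-Lindel\"of structures.

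I expect no real obstacle: everything is a direct check against the two cited criteria. The only mildly delicate points are phrasing the exclusion of the metric-group coarse structures convincingly, and — in the topological cases — keeping firmly to the conventions ``relatively compact $=$ subset of a compact set'' and ``$\kappa$-Lindel\"of subspace'', so that no superfluous separation hypotheses on the topological groups are needed.
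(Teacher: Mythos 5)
Your proposal is correct and follows the same route as the paper, which simply declares these verifications trivial or classical (citing Rosendal's Lemma~2.35 for the $\mathcal{OB}$ case, exactly the pullback-of-pseudo-metrics argument you give); you merely write out the case-by-case checks against Propositions \ref{prop:homomorphism_group_ideal}(ii) and \ref{prop:homo_unif_bounded_copres}(iii). The added counterexample showing the compact-group coarse structure is not perfect is correct extra content beyond what the statement asks.
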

\begin{proof}
The proofs are trivial or follow from classical topological results. As for the left-coarse structure, we refer to \cite[Lemma 2.35]{ros}.
\end{proof}

In a forthcoming paper (\cite{DikZa_her}) we focus on a particular functorial coarse structure, namely the compact-group coarse structure. We will study the preservation of some properties (especially related to dimensions) along the Pontryagin functor and the Bohr functor.

\begin{teorema}\label{ThmFunct}
Let  $i$ be a normalised, subadditive cardinal invariant of abelian groups.
 Then the following properties are equivalent: 
	\begin{compactenum}[(i)]
		\item for every group $G$ and every subgroup $H\leq G$, either $i(G/H)\leq i(G)$ or $i(G/H) < \infty$ whenever $i(G)$ finite;
		\item for every infinite cardinal $\kappa$, $\E_{\II_{i,\kappa}}$  defines a cellular functorial coarse structure in the category of abelian group, i.e., every group homomorphism $f\colon G \to H$ is bornologous when $G$ and $H$ carry their linear coarse structures $\E_{\II_{i,\kappa}}$.
	\end{compactenum}
\end{teorema}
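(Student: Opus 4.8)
The plan is to translate condition (ii) into a statement about the behaviour of $i$ under passage to quotients, and then match it with (i) by a dichotomy on whether $i(G)$ is finite or infinite.

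First I would record what (ii) really demands. By the Claim above, $\II_{i,\kappa}$ is a group ideal, so $(G,\E_{\II_{i,\kappa}})$ is a (left) coarse group; the object assignment $G\mapsto(G,\E_{\II_{i,\kappa}})$ is concrete by construction, and $\E_{\II_{i,\kappa}}$ is linear, hence cellular, since its base $\BB_{i,\kappa}$ consists of subgroups. Thus the entire content of (ii) is that every homomorphism $f\colon G\to H$ of abelian groups is bornologous for these coarse structures. Since $\BB_{i,\kappa}$ is a base of $\II_{i,\kappa}$ made of subgroups, Proposition~\ref{prop:homomorphism_group_ideal}(ii), applied to this base, reduces bornologousness of $f$ to the requirement that $f(K)\in\II_{i,\kappa}$ in $H$ for every subgroup $K\leq G$ with $i(K)<\kappa$. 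As $f(K)$ is itself a subgroup of $H$, this holds iff $i(f(K))<\kappa$, and by the first isomorphism theorem together with isomorphism-invariance of $i$ one has $i(f(K))=i\bigl(K/(K\cap\ker f)\bigr)$. Letting $G,H,f$ range freely, and noting that $K\cap\ker f$ realises every subgroup of $K$ (take $G=K$, $H=K/N$, $f$ the quotient map), I conclude that, for a fixed infinite cardinal $\kappa$, condition (ii) for $\kappa$ is equivalent to
\[
(\star_\kappa)\qquad i(K/N)<\kappa\ \text{ for every abelian group } K \text{ with } i(K)<\kappa \text{ and every subgroup } N\leq K.
\]

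Next I would prove (i)$\Rightarrow$(ii) by checking $(\star_\kappa)$ for each infinite $\kappa$: given $K$ with $i(K)<\kappa$ and $N\leq K$, if $i(K)$ is infinite then (i) yields $i(K/N)\leq i(K)<\kappa$, while if $i(K)$ is finite then (i) yields $i(K/N)<\infty$, hence $i(K/N)<\kappa$ because $\kappa$ is infinite. Conversely, for (ii)$\Rightarrow$(i), fix $G$ and a subgroup $H\leq G$: if $i(G)$ is finite, applying $(\star_\omega)$ with $K=G$, $N=H$ gives $i(G/H)<\omega$, i.e.\ $i(G/H)$ is finite; if $i(G)$ is infinite, setting $\kappa=i(G)^{+}$ (an infinite cardinal with $i(G)<\kappa$) and applying $(\star_\kappa)$ gives $i(G/H)<i(G)^{+}$, that is $i(G/H)\leq i(G)$. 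In both cases this is exactly the assertion of (i).

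I expect the main obstacle to be the reduction in the first step rather than the final case split: one must carefully use that a homomorphic image of a subgroup is again a subgroup (so that testing $f(K)\in\II_{i,\kappa}$ collapses to the single inequality $i(f(K))<\kappa$), that bornologousness need only be tested on the cofinal subgroup base $\BB_{i,\kappa}$, and that every pair $(K,N)$ arises as $(K,K\cap\ker f)$. Once $(\star_\kappa)$ is in hand, the equivalence with (i) is routine, the only slightly delicate point being the use of the successor cardinal $i(G)^{+}$ to capture the clause ``$i(G/H)\leq i(G)$'' in the infinite case. Note that subadditivity and normalisation of $i$ are used only through the Claim guaranteeing that $\II_{i,\kappa}$ is a group ideal, and play no further role here.
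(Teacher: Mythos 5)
Your proposal is correct and follows essentially the same route as the paper: both directions reduce, via Proposition~\ref{prop:homomorphism_group_ideal}(ii) and the identification $i(f(K))=i(K/(K\cap\ker f))$, to the quotient condition you call $(\star_\kappa)$, and the case split on $i(G)$ finite/infinite (with $\kappa=\omega$ resp.\ $\kappa=i(G)^{+}$) is exactly how the paper extracts (i) from bornologousness of the quotient maps. Your version merely makes explicit the intermediate condition and the use of the successor cardinal, which the paper phrases as ``since the cardinal $\kappa$ can be taken arbitrarily.''
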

\begin{proof} 
	(i)$\to$(ii) Let $f\colon G\to H$ be a homomorphism between abelian groups. It is enough to notice that for each $K\subseteq G$, if $K\in\BB_{i,\kappa}$, we have $i(f(K))=i(K/\ker f)\le i(K)<\kappa$ provided $i(K)$ is infinite. If $i(K)$ is finite, then $i(f(K))=i(K/\ker f)$ is finite as well, so 
	$i(f(K))<\kappa$ again. Hence $f$ is bornologous thanks to Proposition \ref{prop:homomorphism_group_ideal}.
	
	(ii)$\to$(i) Let $G$ be an abelian group and $H$ be a subgroup. Let $\kappa$ be an infinite cardinal such that $i(G)<\kappa$. Since $f\colon(G,\E_{\II_{i,\kappa}})\to(G/H,\E_{\II_{i,\kappa}})$ is bornologous and $G\in\II_{i,\kappa}$, then $G/H\in\II_{i,\kappa}$, which means that $i(G/H)<\kappa$. Since the cardinal $\kappa$ can be taken arbitrarily, then $i(G/H)\leq i(G)$. To check the case when $G$ is finite, just take $\kappa = \omega$. 
\end{proof}

The property (i) of Theorem \ref{ThmFunct} is obviously implied by the fact that the cardinal invariant $i$ is monotone with respect to quotients.
Similarly to the proof of the implication (i)$\to$(ii) of Theorem \ref{ThmFunct} one obtains the proof of the following: 

\begin{proposizione}
Let $i$ be a normalised and subadditive cardinal invariant, monotone with respect to taking quotients. Then $\E_{\II_{i}^0}$ defines a functorial coarse structure.
\end{proposizione}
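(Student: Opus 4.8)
The plan is to follow verbatim the strategy of the implication (i)$\to$(ii) of Theorem \ref{ThmFunct}, replacing the base $\BB_{i,\kappa}$ by $\BB_i^0$. Since $i$ is normalised and subadditive, we already know (from the discussion preceding Proposition \ref{prop:length_function}) that $\II_i^0=\cl(\BB_i^0)$ is a group ideal on every abelian group $G$, and that it is linear, hence induces a cellular coarse structure. So the only thing left to check is that the assignment $G\mapsto(G,\E_{\II_i^0})$ is functorial, i.e., that every group homomorphism $f\colon G\to H$ is bornologous as a map $(G,\E_{\II_i^0})\to(H,\E_{\II_i^0})$; the requirements on identities and composites then hold trivially, since identities are bornologous and composites of homomorphisms are homomorphisms, and $\Ufun$ is the forgetful functor.

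First I would invoke Proposition \ref{prop:homomorphism_group_ideal}(ii): bornologousness of $f$ is equivalent to $f(I)\in\II_i^0$ for every $I\in\II_i^0$. So fix $I\in\II_i^0$; by definition of the base $\BB_i^0$ there is a subgroup $K\leq G$ with $I\subseteq K$ and $i(K)=0$. Then $f(I)\subseteq f(K)$, and $f(K)$ is a subgroup of $H$ isomorphic to the quotient $K/(K\cap\ker f)$.

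The crucial step is to apply monotonicity of $i$ with respect to quotients to this quotient: $i(f(K))=i\bigl(K/(K\cap\ker f)\bigr)\leq i(K)=0$, whence $i(f(K))=0$, i.e. $f(K)\in\BB_i^0$. Therefore $f(I)\subseteq f(K)\in\BB_i^0\subseteq\II_i^0$, so $f(I)\in\II_i^0$, and $f$ is bornologous. As this holds for all homomorphisms between abelian groups, the assignment $G\mapsto(G,\E_{\II_i^0})$ defines a concrete functor, i.e., a functorial coarse structure.

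I do not expect a genuine obstacle here: the argument is a one-line reduction once Proposition \ref{prop:homomorphism_group_ideal} and the monotonicity hypothesis are in hand. The only minor points worth recording, exactly as in Theorem \ref{ThmFunct}, are that normalisation and subadditivity of $i$ are what guarantee that $\BB_i^0$ is non-empty and generates a group ideal, and that monotonicity with respect to quotients (rather than the weaker disjunctive condition (i) of Theorem \ref{ThmFunct}) is exactly the hypothesis needed to kill $i$ on homomorphic images inside $\BB_i^0$.
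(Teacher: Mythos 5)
Your proof is correct and is exactly what the paper intends: the authors state that the result is obtained ``similarly to the proof of the implication (i)$\to$(ii) of Theorem \ref{ThmFunct}'', and your argument is precisely that adaptation, reducing bornologousness to Proposition \ref{prop:homomorphism_group_ideal}(ii) and using monotonicity with respect to quotients to get $i(f(K))=i\bigl(K/(K\cap\ker f)\bigr)\leq i(K)=0$.
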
 
%

If the cardinal invariant is the free rank or the normalised cardinality, then, for every infinite cardinal $\kappa$, $\E_{\II_{i,\kappa}}$ defines a perfect functorial coarse structure. In the general case we cannot find the precise conditions on $i$ that ensure this property: 
 
\begin{problem} Determine the properties of the cardinal invariant $i$ such that for every infinite cardinal the functorial coarse structure $\kappa$, $\E_{\II_{i,\kappa}}$  is perfect.
 \end{problem}

\subsection{Preservation of morphisms properties along pullbacks}\label{sub:pull}

Several categorical constructions in the category $\CS$ can be carried out in the categories of coarse groups. In particular, we focus here on pullbacks, which will be useful in the sequel. Since $\CS$ is a topological category (see \cite{DikZa_cat}), $\CS$ has, in particular, pullbacks. We can also give a precise description of the pullback of the diagram $Y\xrightarrow{f}Z\xleftarrow{g} X$ in $\CS$ as the triple $(P,u,v)$ in the following commutative diagram
\begin{equation}\label{diag:pull}
\begin{CD}
P @>u>> X \\
@V{v}VV @VV{g}V \\
Y @>f>> Z,\\
\end{CD}
\end{equation}
where $P:=\{(x,y)\in X\times Y\mid g(x)=f(y)\}$ is endowed with the coarse structure inherited by $X\times Y$, and $u$ and $v$ are the restrictions of the canonical projections. Note that, if the diagram $Y\xrightarrow{f}Z\xleftarrow{g} X$ is in $\lCG$, in $\CG$, in $\lkappaCG$, or in $\kappaCG$, then \eqref{diag:pull} belongs to $\lCG$, to $\CG$, to $\lkappaCG$, or to $\kappaCG$, respectively, and thus it is a pullback also in those categories.

\begin{proposition}\label{prop:pull_coarse_emb} 
The class $\mathcal V$ of all coarse embeddings in $\lCG$ is preserved along pullbacks in $\lCG$, i.e., if the diagram \eqref{diag:pull} is a pullback  where $g\in\mathcal V$, then also $v\in\mathcal V$.
\end{proposition}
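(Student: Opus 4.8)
The plan is to verify directly that $v$ is bornologous and effectively proper, since a coarse embedding is by definition a map that is both. For the first property, observe that $v$ is simply the restriction to $P$ of the canonical projection $p_Y\colon X\times Y\to Y$, and $P$ carries the subspace coarse structure inherited from the product $X\times Y$; since $p_Y$ is bornologous and restrictions of bornologous maps to subspaces (with the subspace coarse structure) remain bornologous, $v$ is bornologous. So the only real content is the effective properness of $v$, and this is where the hypothesis $g\in\mathcal V$ will be used.

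To that end, I would fix $E\in\E_Y$ and consider
$$
(v\times v)^{-1}(E)=\{((x_1,y_1),(x_2,y_2))\in P\times P\mid (y_1,y_2)\in E\}.
$$
The key observation is that the defining relation of the pullback \eqref{diag:pull} forces $g(x_i)=f(y_i)$ for $i=1,2$. Hence, whenever $(y_1,y_2)\in E$, bornologousness of $f$ gives $(g(x_1),g(x_2))=(f(y_1),f(y_2))\in (f\times f)(E)\in\E_Z$, and then effective properness of $g$ yields $(x_1,x_2)\in (g\times g)^{-1}\big((f\times f)(E)\big)=:F\in\E_X$. Consequently
$$
(v\times v)^{-1}(E)\subseteq\Big[(p_X\times p_X)^{-1}(F)\cap(p_Y\times p_Y)^{-1}(E)\Big]\cap(P\times P),
$$
and the right-hand side is an entourage of $P$: the bracketed set is a basic entourage of the product coarse structure $\E_X\times\E_Y$, and intersecting it with $P\times P$ produces an entourage of the subspace coarse structure on $P$. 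Since $\E_P$ is closed under subsets, $(v\times v)^{-1}(E)\in\E_P$, so $v$ is effectively proper; combined with bornologousness this shows $v\in\mathcal V$.

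I expect no serious obstacle here: the argument uses only that $f$ is bornologous and $g$ is effectively proper, together with the explicit description of the pullback, and it never touches the group structure — so in fact it works verbatim in $\CS$. The one point that requires a little care is to work with the containment above rather than with an exact equality, and to remember that this suffices precisely because coarse structures are ideals; one should also note in passing that $v$ is a group homomorphism (being the restriction of a projection homomorphism, $P$ being a subgroup of $X\times Y$ as $f$ and $g$ are homomorphisms), so that $v$ is genuinely a morphism of $\lCG$ and hence belongs to $\mathcal V$ as a class of $\lCG$-morphisms.
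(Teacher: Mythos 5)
Your proof is correct, and the underlying computation is exactly the one in the paper: the containment you establish at the level of entourages, namely $(v\times v)^{-1}(E)\subseteq\bigl[(p_X\times p_X)^{-1}(F)\cap(p_Y\times p_Y)^{-1}(E)\bigr]\cap(P\times P)$ with $F=(g\times g)^{-1}\bigl((f\times f)(E)\bigr)$, is the entourage-level version of the paper's one-line inclusion $v^{-1}(K)\subseteq g^{-1}(f(K))\times K$ at the level of group ideals. The only real difference is the formalism: the paper first invokes Proposition \ref{prop:compatibilityproduct} to identify $\II_P$ with $(\II_X\times\II_Y)\cap\mathcal P(P)$ and Proposition \ref{prop:homomorphism_group_ideal} to reduce effective properness of the homomorphism $v$ to properness on ideal members, which makes the write-up shorter, whereas you argue directly with entourages and thereby obtain (as you note) a statement that holds verbatim in $\CS$ without any group structure. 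Both routes are sound; yours trades the convenience of the group-ideal calculus for slightly greater generality.
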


\begin{proof}
Denote by $\II_Y$, $\II_X$, and $\II_P$ the group ideals associated to $Y$, $X$, and $P$, respectively. Proposition \ref{prop:compatibilityproduct} implies that $\II_P=(\II_X\times\II_Y)\cap\mathcal P(P)$. Thanks to Proposition \ref{prop:homomorphism_group_ideal}, it is enough to show that, for every $K\in\II_Y$, $v^{-1}(K)\in\II_P$. For every $(x,y)\in v^{-1}(K)$, $g(x)=f(y)$ and $y\in K$. Thus $x\in g^{-1}(f(K))$ and so, since $g$ is a coarse embedding and $f$ is bornologous,
$$
v^{-1}(K)\subseteq g^{-1}(f(K))\times K\in\II_X\times\II_Y,
$$
according to Proposition \ref{prop:homomorphism_group_ideal}.
\end{proof}
Let us now prove a variation of Proposition \ref{prop:pull_coarse_emb}.
\begin{corollario}\label{coro:pull_coarse_eq}
   The class $\mathcal V^\prime$ of all coarse equivalences in $\lkappaCG$ is preserved along pullbacks in $\lkappaCG$.
\end{corollario}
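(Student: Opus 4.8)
The plan is to reduce everything to the description of a coarse equivalence as a large-scale surjective coarse embedding: by Definition \ref{def:Mor}, a bornologous map is a coarse equivalence precisely when it is effectively proper (equivalently, a coarse embedding) and large-scale surjective. So, given a pullback square \eqref{diag:pull} in $\lkappaCG$ with $g\in\mathcal V^\prime$, I would show separately that the pullback projection $v\colon P\to Y$ is (A) a coarse embedding and (B) large-scale surjective.

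For (A): being a coarse equivalence, $g$ is in particular a coarse embedding, i.e.\ $g\in\mathcal V$ in the notation of Proposition \ref{prop:pull_coarse_emb}. As noted in the text preceding that proposition, a pullback computed in $\lkappaCG$ coincides with the one computed in $\lCG$, so \eqref{diag:pull} is also a pullback in $\lCG$; Proposition \ref{prop:pull_coarse_emb} then yields that $v$ is a coarse embedding.

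For (B): first identify the image as $v(P)=\{y\in Y\mid\exists x\in X,\ g(x)=f(y)\}=f^{-1}(g(X))$, which is a \emph{subgroup} of $Y$ since $g$ is a homomorphism (so $g(X)\leq Z$) and $f$ is a homomorphism. Because $g$ is a coarse equivalence it is large-scale surjective, so $g(X)$ is large in $(Z,\E_{[Z]^{<\kappa}})$; for a subgroup this means that strictly fewer than $\kappa$ cosets of $g(X)$ suffice to cover $Z$, i.e.\ $[Z:g(X)]<\kappa$. The canonical set-theoretic injection $Y/f^{-1}(g(X))\hookrightarrow Z/g(X)$, $yf^{-1}(g(X))\mapsto f(y)g(X)$, then gives $[Y:f^{-1}(g(X))]<\kappa$, so $v(P)=f^{-1}(g(X))$ is large in $Y$ and $v$ is large-scale surjective. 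Combining (A) and (B), $v$ is a coarse equivalence in $\lkappaCG$.

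The only points that really carry weight are bookkeeping ones: that the pullback object $P$, endowed with the coarse structure inherited from $X\times Y$, carries exactly the $\kappa$-group coarse structure $\E_{[P]^{<\kappa}}$ (already observed in the text, and what legitimises both ``\eqref{diag:pull} is a diagram in $\lkappaCG$'' and the coincidence of the two pullbacks); and the translation ``a subgroup of a $\kappa$-group coarse group is large iff it has index $<\kappa$'', together with the elementary fact that taking preimages along a homomorphism does not increase this index. Everything else is automatic. A slightly more self-contained alternative would bypass Proposition \ref{prop:pull_coarse_emb} and argue directly from Corollary \ref{coro:homo_ce}: in $\lkappaCG$ its conditions (iii) and (iv) always hold (images of sets of size $<\kappa$ have size $<\kappa$, and every subset of size $<\kappa$ of the image of a homomorphism admits a section of size $<\kappa$), so a homomorphism there is a coarse equivalence iff its kernel has size $<\kappa$ and its image is large; one then only needs $\ker v\cong\ker g\in[X]^{<\kappa}$ together with the largeness computation from (B).
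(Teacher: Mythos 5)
Your proof is correct and follows essentially the same route as the paper: the paper likewise invokes Proposition \ref{prop:pull_coarse_emb} to reduce to large-scale surjectivity of $v$, identifies $v(P)=f^{-1}(g(X))$, and bounds $\lvert Y:f^{-1}(g(X))\rvert$ by $\lvert Z:g(X)\rvert<\kappa$ (your use of the injection $Y/f^{-1}(g(X))\hookrightarrow Z/g(X)$ is in fact slightly more careful than the paper's stated equality). Even your closing alternative via Corollary \ref{coro:homo_ce} and $\ker v\cong\ker g$ is exactly the remark the paper makes immediately after its proof.
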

\begin{proof}
According to Proposition \ref{prop:pull_coarse_emb}, it is enough to show that if $g$ is large-scale surjective, then so it is $v$. First of all, it is easy to check that $v(P)=f^{-1}(g(X))$. Since $g$ is large-scale surjective, $\lvert Z:g(X)\rvert<\kappa$. Thus
$$
\lvert Y:v(P)\rvert=\lvert Y:f^{-1}(g(X))\rvert=\lvert f(Y):g(X)\cap f(Y)\rvert=\lvert Z:g(X)\rvert<\kappa.
$$
\end{proof}
We could have given a different proof of Corollary \ref{coro:pull_coarse_eq} without using Proposition \ref{prop:pull_coarse_emb}. In fact, since the $\kappa$-group coarse structure is functorial and \perf, according to Corollary \ref{coro:homo_ce}, it is enough to show that $\lvert\ker v\rvert<\kappa$ and $\lvert Y:v(P)\rvert<\kappa$.

\subsection{Localisation of a category, the case of $\kappaCGsim[\mathcal W^{-1}]$}\label{sub:loc}
The reader may be disappointed by Remark \ref{rem:CGsim}(iii). In fact, it would be desirable to have a category where all homomorphisms which are coarse equivalences are actually isomorphisms. The category $\CGQsim$ has that property, but is it the best choice? The aim of this subsection is to discuss (and give a precise meaning to) this question.

\begin{definizione}
Let $\XX$ be a category and $\mathcal W$ be a family of morphisms of $\XX$. A {\em localisation of $\XX$ by $\mathcal W$} (or {\em at $\mathcal W$}) is given by a category $\XX[\mathcal W^{-1}]$ and a functor $\Qfun\colon\XX\to\XX[\mathcal W^{-1}]$ such that:
\begin{compactenum}[(i)]
\item for every $w\in \mathcal W$, $\Qfun(w)$ is an isomorphism;
\item for any category $\YY$ and any functor $\Ffun\colon\XX\to\YY$ such that $\Ffun(w)$ is an isomorphism, for every $w\in \mathcal W$, there exists a functor $\Ffun_{\mathcal W}\colon\XX[\mathcal W^{-1}]\to\YY$ and a natural isomorphism between $\Ffun$ and $\Ffun_{\mathcal W}\circ\Qfun$;
\item for every category $\YY$, the map between functor categories $\cdot\circ\Qfun\colon\Funct(\XX[\mathcal W^{-1}],\YY)\to\Funct(\XX,\YY)$ is full and faithful.
\end{compactenum}
\end{definizione}
The localisation of a category by a family of morphisms, if it exists, it is unique.

Intuitively, if we localise a category $\XX$ by a family of morphisms $\mathcal W$, we enrich the family of morphisms of $\XX$ by imposing that the elements of $\mathcal W$ become isomorphisms. We would like to apply this idea to localise $\CGsim$ by the family $\mathcal W$ of all equivalence classes of homomorphisms which are coarse equivalences. 

\begin{domanda}\label{q:loc}
In the previous notations, does the localisation $\CGsim[\mathcal W^{-1}]$ exist? If yes, is it isomorphic to $\CGQsim$?
\end{domanda}

The  functor  $\Ufun\colon\CGsim\to\CGQsim$ takes every $w\in\mathcal W$ to an isomorphism $\Ufun(w)$. Hence, if $\CGsim[\mathcal W^{-1}]$ exists, and $\Qfun\colon\CGsim\to\CGsim[\mathcal W^{-1}]$ is the functor guaranteed by the definition, there exists a functor $\Ffun_{\mathcal W}\colon\CGsim[\mathcal W^{-1}]\to\CGQsim$ and a natural transformation between $\Ufun$ and $\Ffun_{\mathcal W}\circ\Qfun$.

The final part of this subsection will be devoted to construct the localisation of $\kappaCGQsim$, for every infinite cardinal $\kappa$, by the family $\mathcal W$ of all homomorphisms which are coarse equivalences.

The general definition of the localisation of a category is hard to use. However there are some special situations in which constructing it and working with it is easier.

\begin{definizione}[\cite{GabZis}]\label{def:calculus}
A pair $(\XX,\mathcal W)$ of a category $\XX$ and a class of morphisms $\mathcal W$ is said to admit a {\em calculus of right fractions} if the following conditions holds:
\begin{compactenum}[(i)]
\item $\mathcal W$ contains all identities and it is closed under composition;
\item ({\em right Ore condition}) given a morphism $w\colon X\to Z$ in $\mathcal W$ and any morphism $f\colon Y\to Z$ in $\XX$, there exist a morphism $w^\prime\colon T\to Y$ in $\mathcal W$ and a morphism $f^\prime\colon T\to X$ in $\XX$ such that the diagram
\[
\begin{CD}
T @>f^{\prime}>> X \\
@V{w^\prime}VV @VV{w}V \\
Y @>f>> Z\\
\end{CD}
\]
commutes;
\item ({\em right cancellability}) given an arrow $w\colon Y\to Z$ in $\mathcal W$ and a pair of morphisms $f,g\colon X\to Y$ such that $w\circ f=w\circ g$, there exists an arrow $w^\prime\colon T\to X$ in $\mathcal W$ such that $f\circ w^\prime=g\circ w^\prime$.
\end{compactenum}
The pair $(\XX,\mathcal W)$ is a {\em homotopical category} if, moreover, the following property is fulfilled:
\begin{compactenum}[(iv)]
\item ({\em $2$-out-of-$6$-property}) for every triple of composable morphisms $f\colon X\to Y$, $g\colon Y\to Z$ and $h\colon Z\to T$, if $g\circ f$ and $h\circ g$ are in $\mathcal W$, then so are $f$, $g$, $h$ (and, necessarily $h\circ g\circ f$).
\end{compactenum}
\end{definizione}

If $\XX$ is a category, a {\em span} (or {\em roof}, or {\em correspondence}) {\em from an object $X$ to an object $Y$} is a diagram of the form $X\xleftarrow{f}Z\xrightarrow{g}Y$, for some morphisms $f$ and $g$ of $\XX$. In this case, $f$ ($g$) is the {\em left leg} ({\em right leg}, respectively) of the span.

If $(\XX,\mathcal W)$ admits a calculus of right fractions, then we can construct $\XX[\mathcal W^{-1}]$ as follows. It has the same objects as $\XX$, while, as morphisms, we take the spans between objects of $\XX$ whose left legs belong to $\mathcal W$ under the following equivalence relation: to such spans
$$
\xymatrix{
	& Z\ar[dl]^w\ar[dr]_f&\\
	X & & Y\\
	& Z^\prime\ar[ul]_{w^\prime}\ar[ur]^{f^\prime}&
}
$$
are equivalent if there exist an object $\overline Z$ and two morphisms $s\colon\overline Z\to Z$ and $t\colon\overline Z\to Z^\prime$ such that all the squares in 
$$
\xymatrix{
&	& Z\ar[dl]^w\ar[dr]_f&\\
\overline Z\ar[urr]^s\ar[drr]_t &	X & & Y\\
&	& Z^\prime\ar[ul]_{w^{\prime}}\ar[ur]^{f^{\prime}}&
}
$$
commute and $w\circ s=w^\prime\circ t\in\mathcal  W$.

In this category we define the composition of two morphisms as follows: if $X\xleftarrow{w} X^\prime\xrightarrow{f} Y$ and $Y\xleftarrow{w^\prime} Y^\prime\xrightarrow{g}Z$ are two representatives of their equivalence classes, because of Definition \ref{def:calculus}(ii), there exists another span $X^\prime\xleftarrow{w^{\prime\prime}}T\xrightarrow{h}Y^\prime$ such that all the  squares in
$$
\xymatrix{
& & T\ar[dl]_{w^{\prime\prime}}\ar[dr]^h & &\\
& X^\prime\ar[dl]_{w}\ar[dr]^f & & Y^\prime\ar[dl]_{w^\prime}\ar[dr]^g &\\
X & & Y & & Z
}
$$
commutes, where $w^{\prime\prime}\in\mathcal  W$ and so does $w\circ w^{\prime\prime}$ (Definition \ref{def:calculus}(i)), and thus we can define the composite as the equivalence class of $X\xleftarrow{w\circ w^{\prime\prime}}T\xrightarrow{g\circ h}Z$.

The functor $\Qfun\colon\XX\to\XX[\mathcal W^{-1}]$ fix the objects and sends every morphism $f\colon X\to Y$ of $\XX$ in the span $X\xleftarrow{1_X}X\xrightarrow{f} Y$ (note, in fact, that $1_X\in \mathcal W$).

If we begin with a homotopical category, the functor $\Qfun$ is exact.

\begin{lemma}\label{lemma:CGsim_calculus}
Let $\mathcal W$ be the family of all equivalence classes of homomorphisms which are coarse equivalences. Then
\begin{compactenum}[(i)]
\item $\mathcal W$ contains all the identities and it is closed under composition;
\item $(\CGsim,\mathcal W)$ has the right cancellability property;
\item $(\CGsim,\mathcal W)$ has the $2$-out-of-$6$-property;
\item for every infinite cardinal $\kappa$, $(\kappaCGsim,\mathcal W^\prime)$, where $\mathcal W^\prime=\mathcal W\cap\kappaCGsim$, satisfies the right Ore condition.
\end{compactenum}
\end{lemma}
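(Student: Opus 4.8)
The statement collects four facts about the family $\mathcal W$ of equivalence classes of homomorphisms that are coarse equivalences. I would prove each item in turn, using the characterisation of homomorphisms that are coarse equivalences in Corollary \ref{coro:homo_ce} as the main tool, together with the closure properties recorded in Propositions \ref{prop:homomorphism_group_ideal} and \ref{prop:homo_unif_bounded_copres}.

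\textbf{(i) Identities and composition.} That identities lie in $\mathcal W$ is immediate: the identity homomorphism is an asymorphism, hence a coarse equivalence. For closure under composition, given homomorphisms $f\colon G\to H$ and $g\colon H\to K$ that are coarse equivalences, $g\circ f$ is a homomorphism, and the composite of coarse equivalences is a coarse equivalence in $\CS$ (the coarse inverse of $g\circ f$ is the composite of the coarse inverses); so $[g\circ f]_\sim\in\mathcal W$. Here I only need that closeness is a congruence, so the relation descends to $\CGsim$.

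\textbf{(ii) Right cancellability.} Suppose $w\colon Y\to Z$ is in $\mathcal W$ and $f,g\colon X\to Y$ are homomorphisms with $w\circ f\sim w\circ g$. Let $g_0\colon Z\to Y$ be a homomorphic coarse inverse of $w$ — but here lies a subtlety: $w$ need not have a coarse inverse that is a homomorphism (Example \ref{ex:no_hom_close}). So instead I would argue directly on group ideals. Since $w$ is a coarse equivalence, by Corollary \ref{coro:homo_ce} $\ker w\in\II_Y$. From $w\circ f\sim w\circ g$ we get $M\in\II_Z$ with $w(f(x))^{-1}w(g(x))\in M$ for all $x$, i.e. $w\bigl(f(x)^{-1}g(x)\bigr)\in M$ for all $x$; hence $f(x)^{-1}g(x)\in w^{-1}(M)$. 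Now $w^{-1}(M)\in\II_Y$ because $w$ is effectively proper (Proposition \ref{prop:homomorphism_group_ideal}(iii)), so $f\sim g$ already in $\CGsim$, and we may take $w'=\mathrm{id}_X$. Thus right cancellability holds trivially once one notes that $w$ effectively proper forces $f\sim g$.

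\textbf{(iii) $2$-out-of-$6$.} Given homomorphisms $f\colon X\to Y$, $g\colon Y\to Z$, $h\colon Z\to T$ with $[g\circ f]_\sim,[h\circ g]_\sim\in\mathcal W$, I must show $[f]_\sim,[g]_\sim,[h]_\sim\in\mathcal W$. The key point is that $2$-out-of-$6$ holds for coarse equivalences in $\CSsim$ (which is known, since isomorphisms in any category satisfy $2$-out-of-$6$ and isomorphisms of $\CSsim$ are exactly classes of coarse equivalences); so each of $f,g,h$ is a coarse equivalence in $\CS$. Since each of $f,g,h$ is already a homomorphism, this gives $[f]_\sim,[g]_\sim,[h]_\sim\in\mathcal W$ directly. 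The only thing to check is that $\mathcal W$ consists precisely of the classes of homomorphisms lying in the isomorphisms of $\CSsim$, which is the definition.

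\textbf{(iv) Right Ore condition in $\kappaCGsim$.} This is the substantive item, and I expect it to be the main obstacle. Given $w\colon G\to Z$ in $\mathcal W'$ (a homomorphism, coarse equivalence, both groups with $\kappa$-coarse structures) and $f\colon H\to Z$ a bornologous homomorphism, I need $w'\colon P\to H$ in $\mathcal W'$ and a homomorphism $f'\colon P\to G$ with $w\circ f'\sim f\circ w'$. The natural candidate is the pullback: let $P=\{(g,h)\in G\times H\mid w(g)=f(h)\}$ with the subspace/product $\kappa$-coarse structure, $f'=$ projection to $G$, $w'=$ projection to $H$. Then $w\circ f'=f\circ w'$ strictly (not just up to closeness), so the square commutes on the nose. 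It remains to see $w'\in\mathcal W'$, i.e. the projection $P\to H$ is a coarse equivalence between groups with $\kappa$-coarse structures — but this is exactly Corollary \ref{coro:pull_coarse_eq}, which states that coarse equivalences in $\lkappaCG$ are preserved along pullbacks, applied to $w\in\mathcal V'$. One still needs $P$ to carry its $\kappa$-group coarse structure, i.e. that the subspace coarse structure on $P\subseteq G\times H$ equals the $\kappa$-coarse structure of the group $P$; this follows because a subgroup of a group with its $\kappa$-coarse structure inherits the $\kappa$-coarse structure (the trace of $[G\times H]^{<\kappa}$ on $P$ is $[P]^{<\kappa}$), and then passing to equivalence classes under $\sim$ is harmless. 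So item (iv) reduces to invoking Corollary \ref{coro:pull_coarse_eq} together with the remark preceding Proposition \ref{prop:pull_coarse_emb} that pullbacks in $\CS$ of diagrams in $\kappaCG$ live in $\kappaCG$. The one point requiring genuine care is checking that the pullback object $P$, as a subgroup of $G\times H$, really carries the $\kappa$-group coarse structure and not merely some coarser subspace structure — this is where I would spend the bulk of the verification.
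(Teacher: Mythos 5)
Your proof is correct and follows essentially the same route as the paper: (i) is trivial, (iii) follows from the $2$-out-of-$6$ property of isomorphisms in $\CSsim$, and (iv) is exactly the paper's argument via the pullback in $\kappaCG$ together with Corollary \ref{coro:pull_coarse_eq}, including the point you flag about the subgroup $P\leq G\times H$ inheriting the $\kappa$-group coarse structure, which the paper records in \S\ref{sub:pull}. The only divergence is in (ii): the paper simply observes that the image of $w$ under the forgetful functor to $\CSsim$ is an isomorphism, hence monic, so $f\sim g$ and $w'=1_X$ works, whereas your direct computation showing $f(x)^{-1}g(x)\in w^{-1}(M)\in\II_Y$ via effective properness reaches the same conclusion and is equally valid.
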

\begin{proof}
Item (i) is trivial.

(ii) Let $\Ufun$ be the forgetful functor from $\CGsim$ to $\CS$. Suppose that $w\colon Y\to Z$ belongs to $\mathcal W$ and $f,g\colon X\to Y$ is a pair of morphisms of $\CGsim$ such that $w\circ f=w\circ g$. Since $\Ufun(w)$ is an isomorphism, $f=g$ in $\CS$, and thus $f=g$ in $\CGsim$. Hence it is enough to put $w^\prime=1_X$.

Item (iii) can be proved similarly to item (ii), by using the functor $\Ufun$ and the fact that, for every $w\in\mathcal W$, $\Ufun(w)$ is an isomorphism of $\CS$.

(iv) Consider the diagram $Y\xrightarrow{f} Z\xleftarrow{w} X$ in $\kappaCG$, where $[w]\in\mathcal W^\prime$. Take the pullback 
\[
\begin{CD}
T @>f^{\prime}>> X \\
@V{w^\prime}VV @VV{w}V \\
Y @>f>> Z\\
\end{CD}
\]
in the category $\kappaCG$
. Then $w^\prime$ is a coarse equivalence, and $[w^\prime]\in\mathcal W^\prime$, according to Corollary \ref{coro:pull_coarse_eq}.
\end{proof}
 In Remark \ref{rem:lemma_3.5} we give a brief comment on the proof of Lemma \ref{lemma:CGsim_calculus}(iv).

From Lemma \ref{lemma:CGsim_calculus}, the following result immediately descends.
\begin{corollario}\label{coro:fin_loc}
For every infinite cardinal $\kappa$, the pair $(\kappaCGsim,\mathcal W)$, where $\mathcal W$ is the family of all equivalence classes of homomorphisms which are coarse equivalences, is a homotopical category and thus $(\kappaCGsim[\mathcal W^{-1}],\Qfun)$ exists and the functor $\Qfun\colon\kappaCGsim\to\kappaCGsim[\mathcal W^{-1}]$ is exact.
\end{corollario}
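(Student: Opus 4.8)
The plan is to derive the statement directly from Lemma~\ref{lemma:CGsim_calculus} together with the general facts about calculus of right fractions recalled above. The first step is to check that the pair $(\kappaCGsim,\mathcal W')$, with $\mathcal W'=\mathcal W\cap\kappaCGsim$, satisfies all four conditions of Definition~\ref{def:calculus}; once this is done, the explicit span description of $\XX[\mathcal W^{-1}]$ recalled above (after \cite{GabZis}) produces the localisation $(\kappaCGsim[\mathcal W^{-1}],\Qfun)$, unique up to isomorphism, and the exactness of $\Qfun$ follows from the fact, also noted above, that $\Qfun$ is exact whenever one starts from a homotopical category.

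For the verification the key point is that $\kappaCGsim$ is a \emph{full} subcategory of $\CGsim$, obtained by restricting only the class of objects, so every composite of morphisms of $\kappaCGsim$ is again a morphism of $\kappaCGsim$. Hence conditions (i), (iii) and (iv) for $\mathcal W'$ follow by restriction from Lemma~\ref{lemma:CGsim_calculus}(i)--(iii): an identity of $\kappaCGsim$ is an identity of $\CGsim$, hence lies in $\mathcal W$, and a composite of two $\mathcal W$-morphisms with objects in $\kappaCGsim$ again lies in $\mathcal W$, giving (i); the morphism $w'$ furnished by right cancellability in Lemma~\ref{lemma:CGsim_calculus}(ii) is an identity, hence already a morphism of $\kappaCGsim$, giving (iii); and if $g\circ f$ and $h\circ g$ lie in $\mathcal W'\subseteq\mathcal W$, then $f,g,h$ lie in $\mathcal W$ by Lemma~\ref{lemma:CGsim_calculus}(iii), and being morphisms of $\kappaCGsim$ they lie in $\mathcal W'$, giving (iv). Condition (ii), the right Ore condition, is precisely Lemma~\ref{lemma:CGsim_calculus}(iv).

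With the four conditions in hand, $(\kappaCGsim,\mathcal W')$ is a homotopical category by definition, hence admits a calculus of right fractions, so $\kappaCGsim[\mathcal W^{-1}]$ is constructed as the category of spans with left leg in $\mathcal W'$ modulo the equivalence relation recalled above, with $\Qfun$ sending $f$ to the span $X\xleftarrow{1_X}X\xrightarrow{f}Y$; and $\Qfun$ is exact since we started from a homotopical category.

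The only genuine content — and hence the main obstacle — lies inside Lemma~\ref{lemma:CGsim_calculus}(iv): one must complete a cospan $Y\xrightarrow{f}Z\xleftarrow{w}X$ with $w$ a coarse equivalence by a span whose left leg is again a coarse equivalence, and the natural candidate is the pullback formed in $\kappaCG$. This succeeds only because coarse equivalences are preserved along pullbacks in $\kappaCG$ (Corollary~\ref{coro:pull_coarse_eq}), which rests on the index computation available for $\kappa$-group coarse structures — and this is exactly why the argument works for $\kappaCGsim$ but not obviously for $\CGsim$, where preservation of large-scale surjectivity along pullbacks is not known. Everything else is a formal restriction-to-a-full-subcategory argument plus a citation of the general construction.
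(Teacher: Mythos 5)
Your proposal is correct and follows essentially the same route as the paper, which derives the corollary directly from Lemma~\ref{lemma:CGsim_calculus} (the paper states only that the result ``immediately descends'' from that lemma). You merely make explicit the routine step the paper leaves implicit, namely that conditions (i), (iii) and (iv) of Definition~\ref{def:calculus} restrict from $\CGsim$ to the full subcategory $\kappaCGsim$, while the right Ore condition is supplied by Lemma~\ref{lemma:CGsim_calculus}(iv) via Corollary~\ref{coro:pull_coarse_eq}.
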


Let us specialise Question \ref{q:loc} in view of Corollary \ref{coro:fin_loc}, using the notation of Corollary \ref{coro:fin_loc}: 
\begin{domanda}
Is $\kappaCGsim[\mathcal W^{-1}]$ isomorphic to $\kappaCGQsim$?
\end{domanda}

\begin{remark}\label{rem:lemma_3.5}
According to Question \ref{q:loc}, we would like to know whether the localisation of the whole category $\CGsim$ by the family $\mathcal W$ of all homomorphisms which are coarse equivalences exists or not. One way to provide a positive answer is following the steps that led us to Corollary \ref{coro:fin_loc} and extending them to a more general setting. Then it is worth mentioning that Lemma \ref{lemma:CGsim_calculus}(i)--(iii) holds in general, while the only key point of the proof of Lemma \ref{lemma:CGsim_calculus}(iv) where we actually used the properties of the $\kappa$-group coarse structure is when we showed that $w^\prime$ has large image in $Y$. It is, in fact, the difference between Proposition \ref{prop:pull_coarse_emb} and Corollary \ref{coro:pull_coarse_eq}. If one could extend the proof of just that point, then Corollary \ref{coro:fin_loc} would be immediately generalised, providing a (maybe partial) answer to Question \ref{q:loc}. 
\end{remark}

\subsection*{Acknowledgments}
It is a pleasure to thank the referee for the careful reading and valuable suggestions.

\end{document}